\documentclass{amsart}
\usepackage{geometry}%[margin=1in]
\usepackage{hyperref}
\usepackage{arydshln}
\usepackage{cases}
\usepackage{amsmath}
\usepackage{amsfonts}
\usepackage{bm}
\usepackage{arydshln}
\usepackage{mathrsfs}
\usepackage{pb-diagram}
\usepackage{amssymb}
\usepackage{xypic}
\usepackage[dvips]{graphicx}
\usepackage[all]{xy}
\usepackage{CJK}
\usepackage[dvipsnames]{xcolor}
\usepackage{mathtools}
\usepackage{setspace}
\usepackage{enumerate}
\newtheorem{theorem}{Theorem}[section]
\newtheorem{proposition}[theorem]{Proposition}
\newtheorem{lemma}[theorem]{Lemma}
\newtheorem{condition}[theorem]{Condition}
\theoremstyle{definition}
\newtheorem{definition}[theorem]{Definition}
\newtheorem{example}[theorem]{Example}
\newtheorem{corollary}[theorem]{Corollary}

\theoremstyle{remark}
\newtheorem{remark}[theorem]{Remark}
\newtheorem{Notation}[theorem]{Notation}

\geometry{a4paper,left=3.2cm,right=3.2cm,top=4cm,bottom=4cm}
\numberwithin{equation}{section}
\begin{document}

\title{Some elementary properties of Laurent phenomenon algebras}

\author{Qiuning Du and Fang Li}
\address{Department
of Mathematics, Zhejiang University (Yuquan Campus), Hangzhou, Zhejiang
310027,  PR China
}
\email{11735007@zju.edu.cn, \;fangli@zju.edu.cn}
\date{version of \today}

\dedicatory{}

\thanks{\textit{Mathematics Subject Classification(2020): 13F60, 13F65}}
\keywords{Laurent phenomenon algebra, seed, Laurent polynomial}

\begin{abstract}
Let $\Sigma$  be Laurent phenomenon (LP) seed of rank $n$, $\mathcal{A}(\Sigma)$, $\mathcal{U}(\Sigma)$ and $\mathcal{L}(\Sigma)$ be its corresponding Laurent phenomenon algebra, upper bound and lower bound respectively.
We prove that each seed of $\mathcal{A}(\Sigma)$ is uniquely defined by its cluster, and any two seeds of  $\mathcal{A}(\Sigma)$ with $n-1$ common cluster variables are connected with each other by one step of mutation. The method in this paper also works for (totally sign-skew-symmetric) cluster algebras.
Moreover, we show that $\mathcal{U}(\Sigma)$ is invariant under seed mutations when each exchange polynomials coincides with its exchange Laurent polynomials of $\Sigma$.
Besides, we obtain the standard monomial bases of $\mathcal{L}(\Sigma)$.
We also prove that $\mathcal{U}(\Sigma)$ coincides with $\mathcal{L}(\Sigma)$ under certain conditions.
\end{abstract}
\maketitle
\tableofcontents
\section{Introduction}
Cluster algebras were introduced by Fomin and Zelevinsky in \cite{FZ}.
The core idea to define cluster algebra of rank $n$ is that one should have a {\em cluster seed} and an operator on cluster seeds, called {\em mutation}.
Roughly, a cluster seed $\Sigma_{t_0}$ is a collection of variables $x_{1;t_0},\cdots,x_{n;t_0}$ (cluster variables ) and binomials $F_{1;t_0},\cdots,F_{n;t_0}$ (exchange polynomials). One can apply mutation to a cluster seed to produce a new seed, i.e., new  variables and new binomials.
Note that the exchange polynomial in cluster algebra is always a binomial. One of the main results in cluster algebras is that they have Laurent phenomenon \cite{FZ}.

In the theory of cluster algebras, the following are interesting conjectures on seeds of cluster algebras: in a cluster algebra of rank $n$, (1) each seed is uniquely defined by its cluster; (2) any two seeds with $n-1$ common cluster variables are connected with each other by one step of mutation. One can refer \cite{CL,GSV} for detailed proof.

Significant notations of the upper cluster algebra, upper bound and lower bound associated with the cluster seed was introduced by Berenstein, Fomin and Zelevinsky to study the structure of cluster algebras in \cite{CA3}.
There are some theorems of upper bounds and lower bounds: (a) under the coprime condition, the upper bound is invariant under seed mutations; (b) the standard monomials in $x_1,x_1',\dots,x_n,x_n'$ are linearly independent over $\mathbb{ZP}$ if and only if the cluster seed is acyclic; (c) under the coprime and acyclic, then the upper bound coincides with the lower bound.

Muller showed that locally acyclic cluster algebras coincide with their upper cluster algebras in \cite{M}.
Gekhtman, Shapiro and Vainshte in \cite{GSV} proved (a) for generalized cluster algebras, then Bai, Chen, Ding and Xu demonstrated (c) and the sufficiency of (b) in \cite{BAI}. Besides, Bai discovered that acyclic generalized cluster algebras coincide with their generalized upper cluster algebras.

Laurent phenomenon (LP) algebras were introduced by Lam and Pylyavskyy in \cite{LP}, which generalize cluster algebras from the perspective of exchange relations. The exchange polynomials in LP algebras were allowed to have arbitrarily many monomials, rather than being just binomials. It turns out that the Laurent phenomenon also appears in LP algebras \cite{LP}.

One should note that our method also works for cluster algebras and generalized cluster algebras. We do not talk much about generalized cluster algebras in this paper, and one can refer \cite{BAI,CL,CS,GSV,NT} for details.

In this paper, we first affirm the conjectures on seeds of cluster algebras with respect to LP algebras.
\begin{theorem}
\label{conj} In a LP algebra of rank $n$,
\begin{enumerate}
  \item (Theorem \ref{mainthm1}) each LP seed is uniquely defined by its cluster.
  \item (Theorem \ref{mainthm2}) any two LP seeds with $n-1$ common cluster variables are connected with each other by one step of mutation.
\end{enumerate}
\end{theorem}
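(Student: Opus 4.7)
My plan is to derive both parts from a single rigidity lemma: given an LP seed $\Sigma=(\mathbf{x},\mathbf{F})$ of $\mathcal{A}(\Sigma)$, any other seed $\tilde{\Sigma}$ of the same algebra whose cluster contains the $(n-1)$-subset $\{x_j\}_{j\ne i}$ of $\mathbf{x}$ must have its remaining cluster variable equal to $\hat F_i/x_i$, the mutation of $x_i$ in $\Sigma$. Granting this, part (2) is immediate because $\tilde{\Sigma}$ then has the same cluster as $\mu_i(\Sigma)$. Part (1) follows as well: applied to two seeds $\Sigma,\tilde{\Sigma}$ with $\mathbf{x}=\tilde{\mathbf{x}}$ and to each $i$, the lemma forces $\hat F_i=\hat{\tilde F}_i$; the LP axioms (each $F_i$ irreducible, not divisible by any $x_j$, independent of $x_i$) then recover $F_i=\tilde F_i$ uniquely from $\hat F_i$, so the two seeds coincide.

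To prove the rigidity lemma, I would apply the Laurent phenomenon in both directions: the candidate cluster variable $y:=\tilde x_i$ is a Laurent polynomial in $\mathbf{x}$, while $x_i$ is a Laurent polynomial in the cluster of $\tilde{\Sigma}$. Multiplying these expansions gives $x_i\cdot y\in\mathbb{Z}[\mathbf{x}^{\pm 1}]$. Working in the UFD $\mathbb{Z}[\mathbf{x}^{\pm 1}]$, I would then analyze the denominator of $y$: since $\{x_j\}_{j\ne i}\cup\{y\}$ is a transcendence basis and $y$ is not a unit, its denominator in $\mathbf{x}$ must be a pure power of $x_i$. The irreducibility and coprimality axioms satisfied by $F_i$ then constrain this denominator to be $x_i$ itself, so $x_i\cdot y$ equals $\hat F_i$ up to the natural normalization, matching $y$ with $\hat F_i/x_i$.

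The principal obstacle is the rigidity lemma. In the cluster algebra setting, analogous uniqueness is typically proved using the binomial shape of exchange polynomials together with sign-coherence of denominator or $g$-vectors, neither of which is directly available for LP algebras, where $F_i$ can be any irreducible polynomial with arbitrarily many monomials. I therefore expect the argument to proceed more intrinsically, relying on the three structural axioms for exchange polynomials together with the Laurent phenomenon, and to require careful bookkeeping of the monomial factor $M_i$ relating $F_i$ to $\hat F_i=M_iF_i$---which depends on the other $F_j$ and can a priori differ between the two seeds being compared, so one must confirm that the rigidity argument survives this ambiguity before closing the loop in part (1).
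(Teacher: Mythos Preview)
Your rigidity lemma is false as stated. Take $\tilde\Sigma=\Sigma$: its cluster contains $\{x_j\}_{j\ne i}$, yet the remaining variable is $x_i$, not $\hat F_i/x_i$. The correct statement has two alternatives---the remaining variable is (up to a unit in $R$) either $x_i$ or $\hat F_i/x_i$---and that dichotomy \emph{is} part~(2), so you cannot use the lemma to deduce~(2) for free. Your derivation of~(1) from the lemma also breaks: applied literally to $\Sigma$ and $\tilde\Sigma$ with $\mathbf{x}=\tilde{\mathbf{x}}$, it would force $x_i=\hat F_i/x_i$. To get $\hat F_i=\hat{\tilde F}_i$ you must first mutate $\tilde\Sigma$ at $i$ and then invoke the corrected two-case lemma, which is essentially the paper's route in reverse and strictly harder than proving~(1) directly.

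The sketched proof of the lemma has a separate gap. Writing $y\in R[\mathbf{x}^{\pm1}]$ and $x_i\in R[\tilde{\mathbf{x}}^{\pm1}]$ only gives, by comparing $x_i$-degrees, that $y=L_1x_i^{\pm1}+L_0$ with $L_0,L_1\in R[x_j^{\pm1}:j\ne i]$; it does not force the denominator of $y$ to be a pure power of $x_i$, nor does irreducibility of $F_i$ alone force $x_iy=\hat F_i$ (e.g.\ nothing yet rules out $y=(F_i+x_1)/x_i$). The paper kills $L_0$ and reduces $L_1$ to a Laurent monomial by a further application of the Laurent phenomenon, this time to the mutated variable $\hat F_{i}^{\tilde\Sigma}/y$ expanded in the cluster $\mathbf{x}$. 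It then shows $L_1$ is a unit via a separate lemma (mutate at some $j$ with $a_j\ne0$ and observe that a nontrivial irreducible $F_j$ would sit in the denominator), which requires the hypothesis that the algebra has \emph{no trivial exchange relations}---an assumption you omit but which is genuinely needed for~(2). Part~(1) is handled independently and more simply in the paper: mutate both seeds at $k$, take the ratio of the two new variables, and use the Laurent phenomenon in both directions to see that $F_{k}/\tilde F_{k}$ and its inverse are Laurent, hence units, hence in $R^\times$ by irreducibility and non-divisibility by any $x_j$.
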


Second, we affirm theorems of upper bounds and lower bounds with respect to LP algebras under some conditions, by using the similar methods developed in \cite{CA3}.
\begin{condition}
Let $M_k$ be the lexicographically first monomial in the irreducible polynomial $F_k$ and $f_k(x_i)$ be the polynomial on $x_i$ in $R[x_2,\dots,\hat{x}_i,\dots,\hat{x}_k,\dots,x_n](x_i)$ without constant terms in $F_k$ for any $i\neq k$. Assume that for a LP seed $(\mathbf{x,F})$ of rank $n$, $\forall k\in [1,n]$, $F_k$ satisfies the following conditions:
\begin{enumerate}[(i)]
  \item $\hat{F}_k=F_k$.
  \item $M_k$ is of the form $\mathbf{x^{v_k}}=
  \begin{cases}
  x_{k+1}^{v_{k+1,k}}\cdots x_{n}^{v_{n,k}} & k\in[1,n-1] \\
  1 & k=n \end{cases}$, where $\mathbf{v_k}\in \mathbb{Z}_{\geq0}^{n-k}$ for $k\in[1,n-1]$.
  \item when $x_1\in F_k$ for $k\neq 1$, $F_k=M_k+f_k(x_1)$.
  \item when $x_1\notin F_k$ for $k\neq 1\ or\ 2$, if there exist an index $i$ in $[2,k-1]$ such that $x_k\in M_i$, then $F_k=M_k+f_k(x_i)$.
\end{enumerate}
\label{condition}
\end{condition}

\begin{theorem}
\begin{enumerate}[(a)]
  \item (Theorem \ref{thma}) Under (i) of Condition \ref{condition}, the upper bound is invariant under LP mutations.
  \item (Theorem \ref{wuguan}) Under (i) and (ii) of Condition \ref{condition},the standard monomials in $x_1,x_1',\dots,x_n,x_n'$ form an $R$-basis for $\mathcal{L}(\Sigma)$.
  \item (Theorem \ref{Thmc}) Under Condition \ref{condition}, the upper bound coincides with the lower bound.
\end{enumerate}
\label{abc}
\end{theorem}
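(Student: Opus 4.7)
The plan is to adapt the arguments of Berenstein--Fomin--Zelevinsky \cite{CA3} from cluster algebras to LP algebras, reading condition (i) ($\hat{F}_k=F_k$) as the LP analogue of coprimality and conditions (ii)--(iv) as an explicit encoding of acyclicity suited to the non-binomial exchange polynomials allowed in LP theory. In all three parts, the additional flexibility in the exchange polynomials forces one to track leading monomials more carefully than in the cluster case, and this is where condition (ii) plays its role.

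For part (a), I would describe the upper bound as the finite intersection $\mathcal{U}(\Sigma)=\bigcap_{k=0}^{n}L_k$, with $L_0=R[x_1^{\pm1},\dots,x_n^{\pm1}]$ and $L_k$ the analogous Laurent ring attached to $\mu_k(\Sigma)$. Since $\mu_k$ swaps $L_0$ and $L_k$ and fixes the other generators, invariance $\mathcal{U}(\Sigma)=\mathcal{U}(\mu_k(\Sigma))$ reduces to showing, for each $j\neq k$, the equality $L_j\cap L_0\cap L_k=L_j'\cap L_0\cap L_k$, where the prime marks the mutated seed. Condition (i) forces $F_k=\hat{F}_k$, so no extraneous monomial denominators appear when the LP mutation formula rewrites $F_j$ as $\mu_k(F_j)$; expanding a candidate element as a Laurent polynomial in $x_k$ and comparing coefficient by coefficient then reduces Laurent-ness in $L_j'$ to that in $L_j$, exactly as in the cluster-algebra proof.

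For part (b), the exchange relations $x_k x_k'=F_k$ straighten any monomial in $x_1,x_1',\dots,x_n,x_n'$ into a $\mathbb{Z}_{\geq0}$-combination of standard monomials, giving the spanning property. Linear independence is the serious step. Condition (ii) places the leading term $M_k$ of $F_k$ inside $R[x_{k+1},\dots,x_n]$, so the expansion $x_k'=F_k/x_k$ acquires a well-defined leading term in the lexicographic order determined by $x_1<x_2<\cdots<x_n$. A hypothetical nontrivial relation among standard monomials can then be cleared of denominators inside $L_0$ and its lexicographically leading term extracted, forcing the leading coefficient to vanish in contradiction.

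For part (c), since $\mathcal{L}(\Sigma)\subseteq \mathcal{U}(\Sigma)$ is automatic, the task is $\mathcal{U}(\Sigma)\subseteq \mathcal{L}(\Sigma)$. Given $y\in \mathcal{U}(\Sigma)$, the polynomial part of its $\mathbf{x}$-expansion already lies in $\mathcal{L}(\Sigma)$ by (b), and the remaining negative-power terms are removed by induction in the order dictated by (ii), using $x_k^{-1}=x_k'/F_k$: Laurent-ness of $y$ in $\mu_k(\Sigma)$ supplies the divisibility by $F_k$ needed for the substitution to stay inside $\mathcal{L}(\Sigma)$. The main obstacle is controlling how these substitutions interact across different indices $k$, and this is exactly what conditions (iii) and (iv) are designed for: they restrict $F_k-M_k$ to depend on only a single earlier cluster variable, so that eliminating a negative power of $x_k$ does not introduce negative powers in variables that have not yet been processed, playing the role that acyclicity plays in \cite{CA3}.
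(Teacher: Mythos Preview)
Your outlines for (a) and (b) are broadly correct and match the paper's strategy. For (a), the paper makes the reduction precise by first rewriting $\mathcal{U}(\Sigma)=\bigcap_{j\geq 2}R[x_1,x_1',x_2^{\pm1},\dots,x_j,x_j',\dots,x_n^{\pm1}]$ (Proposition~\ref{U}), after which invariance under $\mu_1$ becomes the ring equality $R[x_1,x_1',x_2,x_2',x_3^{\pm1},\dots]=R[x_1,x_1',x_2,x_2'',x_3^{\pm1},\dots]$ (Lemma~\ref{x2''}). That lemma is proved by an explicit computation showing $x_2''\in R[x_1,x_1',x_2,x_2']$, which is where the LP mutation rule (including the removal of common factors with $\hat{F}_k|_{x_i\leftarrow 0}$) is actually confronted; your sketch glosses over this step. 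For (b), your argument is essentially the paper's.

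The genuine gap is in (c). Your proposed induction---remove negative powers of $x_k$ one index at a time via $x_k^{-1}=x_k'/F_k$---does not work as stated. After rewriting the $x_1^{-m}$-part of $y$ as $(c_{-m}/F_1^{m})\,x_1'^{\,m}$, the coefficients $c_{-m}/F_1^{m}$ are still Laurent in $x_2,\dots,x_n$, and the new element now involves $x_1'$; you have no mechanism guaranteeing it still lies in $R[\mathbf{x}_j^{\pm1}]$ for $j\geq 2$, so the next divisibility by $F_j$ is unavailable. The paper's argument is structurally different: it runs by induction on the \emph{rank} $n$. Freezing $x_1$ and applying the inductive hypothesis reduces the claim to
\[
R[x_1,x_1',x_2^{\pm1},\dots,x_n^{\pm1}]\cap R[x_1^{\pm1},x_2,x_2',\dots,x_n,x_n']=R[x_1,x_1',\dots,x_n,x_n'],
\]
which is analyzed via the homomorphism $\varphi\colon R[x_2,x_2',\dots,x_n,x_n']\to R[x_2^{\pm1},\dots,x_n^{\pm1}]$ obtained by expanding and then setting $x_1=0$. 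Conditions (iii) and (iv) are not used to ``prevent reintroduction of negative powers'' as you describe; they are used in Lemma~\ref{6.6} to compute $\mathrm{Im}(\varphi)=R[x_2,x_2^{(-)},\dots,x_n,x_n^{(-)}]$, identifying this image with the lower bound of a seed of strictly smaller rank so that the inductive hypothesis applies again. Without this $\varphi$-mechanism (or an equivalent), your outline for (c) does not close.
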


This paper is organized as follows: in Section 2, some basic definitions are given. In Section 3, we prove Theorem \ref{conj}, and we give the corresponding results and applications in cluster algebras. In Section 4, we affirm Theorem \ref{abc}.

\section{Preliminaries}

\subsection{Laurent phenomenon algebra}
Let $a,b$ be positive integers satisfying $a\leq b$, write $[a,b]$ for $\{a,a+1,\dots,b\}$.

Let $R$ be a unique factorisation domain over $\mathbb{Z}$, and the\textbf{ ambient field} $\mathcal F$ be the rational function
field in $n$ independent variables over the field of fractions Frac($R$). Recall that an element $f$ of $R$ is \textbf{irreducible} if it is non-zero, not a unit, and not be expressed as the product $f=gh$ of two elements $g,h\in R$ which are non-units.

\begin{definition}
A \textbf{Laurent phenomenon (LP) seed} of rank $n$ in $\mathcal F$ is a pair $(\mathbf{x,F})$, in which

(i) $\mathbf{x}=\{x_1\cdots,x_n\}$ is a transcendence basis for $\mathcal F$ over Frac($R$), where $\mathbf{x}$ is called the {\bf cluster} of  $(\mathbf{x,F})$ and $x_1\cdots,x_n$ are called {\bf cluster variables}.

(ii) $\mathbf{F}=\{F_1,\cdots,F_n\}$  is a collection of irreducible polynomials in $R[x_1,\cdots,x_n]$ such that for each $i,j\in[1,n]$, $x_j\nmid F_i$ ($F_i$ is not divisible by $x_j$) and $F_i$ does not depend on $x_i$, where $F_1,\cdots,F_n$ are called the {\bf exchange polynomials} of $(\mathbf{x,F})$.
\end{definition}

The following notations, definitions and propositions can refer \cite{LP,W}.

Let $F,N$ be two rational functions in $x_1,\cdots,x_n$. Denote by $F|_{x_j\leftarrow N}$ the expression obtained by substituting $x_j$ in $F$ by $N$. And if $F$ involves the variable $x_i$, then we write $x_i\in F$. Otherwise, we write $x_i\notin F$.

\begin{definition}
Let $(\mathbf{x,F})$ be a LP seed in $\mathcal F$. For each $F_j\in{\bf F}$, define a Laurent polynomial $\hat{F}_j=\frac{F_j}{x_1^{a_1}\cdots x_{j-1}^{a_{j-1}}x_{j+1}^{a_{j+1}}\cdots x_n^{a_n}},$ where $a_k\in\mathbb{Z}_{\geq0}$ is maximal such that $F_k^{a_k}$ divides $F_j|_{x_k\leftarrow F_k/x_k'}$ as an element in $R[x_1,\cdots,x_{k-1},(x_k')^{-1},x_{k+1},\cdots,x_n]$. The Laurent polynomials in $ \mathbf{\hat{F}} :=\{\hat{F}_1,\cdots,\hat{F}_n\}$ are called the
{\bf exchange Laurent polynomials}.
\end{definition}

From the definition of exchange Laurent polynomials, we know that $F_j/\hat{F}_j$ is a monomial in $R[x_1,\cdots,\hat{x}_j,\cdots,x_n]$, where $\hat{x}_j$ means $x_j$ vanishes in the $\{x_1,\cdots,x_n\}$. And $\hat{F}_j|_{x_k\leftarrow F_k/x_k'}$ is not divisible by $F_k$.

\begin{proposition}(Lemma 2.4 of \cite{LP})
Let $(\mathbf{x,F})$ be a LP seed in $\mathcal F$, then $\mathbf{F}=\{F_1,\cdots,F_n\}$ and $\mathbf{\hat{F}}=\{\hat{F}_1,\cdots,\hat{F}_n\}$ determine each other uniquely.
\end{proposition}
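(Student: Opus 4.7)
The plan is to check the two implications separately, both of which essentially unpack the definition.

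For $\mathbf{F}\Rightarrow \mathbf{\hat{F}}$, the definition is already explicit: each $\hat{F}_j = F_j/(x_1^{a_1}\cdots x_{j-1}^{a_{j-1}}x_{j+1}^{a_{j+1}}\cdots x_n^{a_n})$ is obtained by reading off the exponents $a_k\in\mathbb{Z}_{\ge 0}$ characterized by the divisibility condition on $F_j|_{x_k\leftarrow F_k/x_k'}$. Since this condition is a deterministic function of the given data $\mathbf{F}$, the map $\mathbf{F}\mapsto\mathbf{\hat{F}}$ is well-defined.

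For the converse $\mathbf{\hat{F}}\Rightarrow\mathbf{F}$, I would start from the remark following the definition, which records that $M_j:=F_j/\hat{F}_j$ is a monomial in $R[x_1,\ldots,\hat x_j,\ldots,x_n]$ with non-negative exponents. So it suffices to show that $M_j=\prod_{k\ne j}x_k^{a_k}$ is determined by $\hat{F}_j$ alone. Let $m_k$ denote the minimum $x_k$-exponent occurring among the monomials of the Laurent polynomial $\hat{F}_j$. The LP-seed requirements on $F_j$ nail down each $a_k$ from two sides: the condition $F_j\in R[x_1,\ldots,x_n]$ (no Laurent denominators) forces $a_k+m_k\ge 0$, while the condition $x_k\nmid F_j$ forces $a_k+m_k\le 0$. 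Hence $a_k=-m_k$, computed purely from $\hat{F}_j$, and $F_j=\hat{F}_j\cdot\prod_{k\ne j}x_k^{-m_k}$ is reconstructed uniquely.

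I do not expect any real obstacle; the content lies entirely in observing that the polynomial condition and the coprimality-to-the-variables condition on $F_j$ act as complementary upper and lower bounds on the clearing exponents. The one ancillary point worth verifying is that $m_k\le 0$ so that $a_k=-m_k\ge 0$ is consistent with $M_j$ being a genuine (non-Laurent) monomial; this is automatic since we began from a bona fide LP seed, where the exponents in the definition of $\hat{F}_j$ are built to lie in $\mathbb{Z}_{\ge 0}$.
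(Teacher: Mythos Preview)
Your argument is correct. The paper does not actually prove this proposition; it merely cites it as Lemma~2.4 of \cite{LP}, so there is no in-paper proof to compare against. Your reconstruction---reading off the clearing exponents $a_k=-m_k$ from the minimal $x_k$-degree $m_k$ of $\hat F_j$, using the pair of constraints ``$F_j$ is a polynomial'' and ``$x_k\nmid F_j$'' as complementary bounds---is exactly the natural argument and is what the original reference does.
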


\begin{proposition}(Lemma 2.7 of \cite{LP})
If $x_k\in F_i$, then $x_i\notin F_k/\hat{F}_k$. In particular, $x_k\in F_i$ implies that $\hat{F}_k|_{x_i\leftarrow0}$ is well defined and $\hat{F}_k|_{x_i\leftarrow0}\in R[x_1^{\pm1},\cdots,\hat x_i,\cdots,\hat x_k,\cdots,x_n^{\pm1}]$.
\label{2.7ofLPA}
\end{proposition}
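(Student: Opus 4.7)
The plan is to reduce the first claim to showing that the exponent $a_i$ of $x_i$ in the monomial $F_k/\hat F_k$ is zero. By the definition of the exchange Laurent polynomial, $a_i$ is the largest integer such that $F_i^{a_i}$ divides $F_k|_{x_i\leftarrow F_i/x_i'}$ inside $R[x_1,\ldots,x_{i-1},(x_i')^{-1},x_{i+1},\ldots,x_n]$, so it is enough to show $F_i\nmid F_k|_{x_i\leftarrow F_i/x_i'}$. To analyse this, I would first expand $F_k$ as a polynomial in $x_i$ over the ring of the remaining variables; since $F_k$ does not depend on $x_k$, this gives $F_k=\sum_{j=0}^{d}c_j x_i^{j}$ with each $c_j\in R[x_1,\ldots,\hat x_i,\ldots,\hat x_k,\ldots,x_n]$, and $c_0\neq 0$ because $x_i\nmid F_k$. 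Substituting $x_i\leftarrow F_i/x_i'$ yields $F_k|_{x_i\leftarrow F_i/x_i'}=\sum_{j=0}^{d}c_j F_i^{j}(x_i')^{-j}$. Since $x_i'$ is a fresh variable, $F_i$ and $x_i'$ are coprime, so $F_i$ divides this Laurent expression iff $F_i$ divides the cleared form $(x_i')^d\cdot F_k|_{x_i\leftarrow F_i/x_i'}=\sum_{j=0}^{d}c_j F_i^{j}(x_i')^{d-j}$ in the ordinary polynomial ring. The summands with $j\geq 1$ are all divisible by $F_i$, so the condition reduces to $F_i\mid c_0$.

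Here is where the hypothesis $x_k\in F_i$ intervenes: $F_i$ has $x_k$-degree at least one, while $c_0$ has $x_k$-degree zero and is nonzero, so $F_i\mid c_0$ would force $\deg_{x_k}(c_0)\geq \deg_{x_k}(F_i)\geq 1$, a contradiction. Hence $a_i=0$, which is precisely $x_i\notin F_k/\hat F_k$. The ``in particular'' clause then follows immediately: we can write $\hat F_k=F_k/M$ where $M$ is a monomial in variables other than $x_i$ and $x_k$, so $\hat F_k|_{x_i\leftarrow 0}=c_0/M$ is a well-defined element of $R[x_1^{\pm 1},\ldots,\hat x_i,\ldots,\hat x_k,\ldots,x_n^{\pm 1}]$.

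The only genuine obstacle in this plan is the bookkeeping step of converting a divisibility statement in the Laurent ring $R[\ldots,(x_i')^{-1},\ldots]$ to one in the ordinary polynomial ring; once the $(x_i')^{-1}$ denominators are cleared and coprimality of $F_i$ with $x_i'$ is invoked, the entire argument collapses to a one-line degree comparison in the variable $x_k$.
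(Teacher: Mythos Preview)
Your argument is correct. The paper does not supply a proof of this proposition; it simply quotes it as Lemma~2.7 of \cite{LP}, so there is nothing to compare your approach against on the paper's side. Your reduction to showing $a_i=0$, and then to $F_i\nmid c_0$ where $c_0=F_k|_{x_i\leftarrow 0}$, followed by the $x_k$-degree comparison (using $x_k\in F_i$ and $x_k\notin F_k$), is exactly the right idea and matches the spirit of the original argument in \cite{LP}.

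One small remark: the ``clearing denominators'' detour is unnecessary. In the ring $R[x_1,\ldots,x_{i-1},(x_i')^{-1},x_{i+1},\ldots,x_n]$, set $y=(x_i')^{-1}$; then $F_k|_{x_i\leftarrow F_i/x_i'}=\sum_{j=0}^d c_jF_i^{\,j}y^{\,j}$ is an ordinary polynomial in $y$, and since $F_i$ does not involve $y$, divisibility by $F_i$ is equivalent to $F_i$ dividing each coefficient $c_jF_i^{\,j}$, hence to $F_i\mid c_0$. This avoids having to talk about coprimality of $F_i$ and $x_i'$ or about passing between Laurent and polynomial rings. The remainder of your argument (the degree bound in $x_k$ and the deduction of the ``in particular'' clause) is fine as written.
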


\begin{definition}
Let $(\mathbf{x,F})$ be a LP seed in $\mathcal F$ and  $k\in[1,n]$. Define a new pair $$(\{x_1',\cdots,x_n'\},\{F_1',\cdots,F_n'\}):=\mu_k(\mathbf{x,F}),$$
where $x_k'=\hat{F}_k/x_k$ and $x_i'=x_i$ for $i \neq k$. And the exchange polynomials change as follows:

 (1) $F_k':=F_k$;

  (2) If $x_k\notin F_i$, then $F_i':=r_iF_i$, where $r_i$ is a unit in $R$;

  (3) If $x_k\in F_i$, then $F_i'$ is obtained from the following three steps:\\
(i) Define $G_i:=F_i|_{x_k\leftarrow N_k}$, where $N_k=\frac{\hat{F}_k|_{x_i\leftarrow0}}{x_k'}$. Then we have
$$G_i\in R[x_1^{\pm1},\cdots,\hat x_i,\cdots, {x_k'}^{-1},\cdots,x_n^{\pm1}]=R[{x_1'}^{\pm1},\cdots,\hat {x_i'},\cdots, {x_k'}^{-1},\cdots,{x_n'}^{\pm1}].$$
(ii) Define $H_i$ to be $G_i$ with all common factors (in $R[x_1,\cdots,\hat x_i,\cdots,\hat x_k,\cdots,x_n]$) with $\hat{F}_k|_{x_i\leftarrow0}$ removed. Note that $H_i$ is unique up to a unit in $R$  and
$H_i\in R[{x_1'}^{\pm1},\cdots,\hat {x_i'},\cdots, {x_k'}^{-1},\cdots,{x_n'}^{\pm1}].$
(iii) Let $M$ be a Laurent monomial in
$x_1',\cdots,\hat{x_i'},\cdots,x_n'$
with coefficient a unit in $R$ such that $F_i':=MH_i\in R[x_1',\cdots,x_n']$ and is not divisible by any variable in $\{x_1',\cdots,x_n'\}$. Thus
$$F_i'\in R[{x_1'},\cdots,\hat {x_i'},\cdots, {x_k'},\cdots,{x_n'}].$$
Then we say that the new pair $\mu_k(\mathbf{x,F})$ is obtained from the LP seed $(\mathbf{x,F})$ by the \textbf{LP mutation} in direction $k$.
\end{definition}
\begin{example}
Let $R=\mathbb{Z}$ and $\mathcal F=\mathbb{Q}(a,b,c)$. Consider the LP seed $(\mathbf{x,F})$, where $\mathbf{x}=\{a,b,c\}$ and $\mathbf F=\{b+1,\ a+c,\ b+1\}$.
From the definition of exchange Laurent polynomials, we can get
$\hat{F}_a=\frac{F_a}{c},~\hat{F}_b=F_b,~\hat{F}_c=\frac{F_c}{a}$.

Let $(\mathbf{x}',\mathbf{F}')=\mu_a(\mathbf{x,F})$, then we have
$a'=\frac{\hat{F_a}}{a}=\frac{b+1}{ac},\ b'=b,\ c'=c$.
From the definition of the LP mutation, the exchange polynomial $F_a$ does not change.
Since $a\notin F_c$, we have $F_c'=b+1$ (or up to a unit).
Since $F_b$ depends on $a$, to compute $F_b'$, we need to procedure the above three steps.
By (i), we get $N_a=\frac{1}{a'c}$ and $G_b=\frac{1}{a'c}+c$.
By (ii), we get $H_b=G_b$ up to a unit in $R$. By (iii), $M=a'c$ and $F_b'=MH_b=a'c^2+1$.
Thus the new seed can be chosen to be
$$(\mathbf{x}',\mathbf{F}')=\{(a',b+1),(b,a'c^2+1),(c,b+1)\}.$$
\label{eg}
\end{example}
\begin{proposition}(Proposition 2.15 of \cite{LP})
Let $(\mathbf{x,F})$ be a LP seed in $\mathcal{F}$, then $\mu_k(\mathbf{x,F})$ is also a LP seed in $\mathcal F$.
\end{proposition}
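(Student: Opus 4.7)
The plan is to verify the two defining axioms of an LP seed for $(\mathbf{x}', \mathbf{F}') = \mu_k(\mathbf{x}, \mathbf{F})$: that $\mathbf{x}'$ is a transcendence basis of $\mathcal{F}$ over $\mathrm{Frac}(R)$, and that each $F_i'$ is an irreducible element of $R[x_1', \ldots, x_n']$ that is independent of $x_i'$ and not divisible by any $x_j'$. The cluster condition is immediate: since $x_i' = x_i$ for $i \neq k$ and $x_k \cdot x_k' = \hat{F}_k$ with $\hat{F}_k$ a Laurent polynomial in $\{x_j : j \neq k\}$, the correspondence $\mathbf{x} \leftrightarrow \mathbf{x}'$ is birational over $\mathrm{Frac}(R)$, so $\mathbf{x}'$ generates the same field and remains a transcendence basis of cardinality $n$.

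I would then dispose of the easy pieces of the exchange data. Since $F_k$ lies in $R[x_i : i \neq k] = R[x_i' : i \neq k]$ and is already irreducible and not divisible by any $x_j$, the choice $F_k' = F_k$ automatically satisfies the required axioms relative to the new cluster (in particular, it does not involve $x_k'$). Similarly, when $x_k \notin F_i$, the mutated polynomial $F_i' = r_i F_i$ inherits all the required properties, since $r_i$ is a unit in $R$ and the remaining variables agree between $\mathbf{x}$ and $\mathbf{x}'$.

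The substantive case, and the main obstacle, is when $x_k \in F_i$. Here I would trace the three-step construction. Proposition \ref{2.7ofLPA} ensures that $\hat{F}_k|_{x_i \leftarrow 0}$ is a well-defined Laurent polynomial in the variables other than $x_i$ and $x_k$, so $N_k$ makes sense and $G_i = F_i|_{x_k \leftarrow N_k}$ lies in the Laurent polynomial ring displayed in step (i). Because $R$, and hence $R[x_1, \ldots, \hat{x}_i, \ldots, \hat{x}_k, \ldots, x_n]$, is a UFD, removing the common factors of $G_i$ with $\hat{F}_k|_{x_i \leftarrow 0}$ yields $H_i$ unambiguously up to a unit; then the monomial $M$ is uniquely determined up to a unit by requiring $MH_i$ to be a polynomial in $R[x_1', \ldots, x_n']$ not divisible by any $x_j'$. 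The genuine difficulty is the irreducibility of $F_i' = MH_i$: my strategy is to invert the substitution via $x_k \leftarrow \hat{F}_k/x_k'$, which would convert any nontrivial factorization of $F_i'$ into a nontrivial factorization of a Laurent-monomial multiple of $F_i$; since the cancellation in step (ii) was maximal and the monomial adjustment in step (iii) was minimal, such a factorization must descend to a factorization of the irreducible polynomial $F_i$ itself, a contradiction. Finally, independence of $F_i'$ from $x_i' = x_i$ is automatic, since $G_i$, $H_i$, and $M$ are all constructed without involving the variable $x_i$.
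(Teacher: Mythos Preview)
The paper does not actually prove this statement; it merely cites it as Proposition~2.15 of \cite{LP} in the preliminaries section, without supplying any argument. So there is no ``paper's own proof'' to compare your proposal against. Your outline is a reasonable reconstruction of what such a proof must contain, and the easy parts (that $\mathbf{x}'$ is a transcendence basis, and that $F_k'$ and the $F_i'$ with $x_k\notin F_i$ satisfy the axioms) are handled correctly.

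That said, the substantive step---irreducibility of $F_i'$ when $x_k\in F_i$---is sketched rather than proved. Your proposed strategy of ``inverting the substitution'' is the right idea, but the sentence ``such a factorization must descend to a factorization of the irreducible polynomial $F_i$ itself'' hides the work. What is actually needed is the observation that the substitution $x_k\mapsto \hat F_k|_{x_i\leftarrow 0}/x_k'$ induces an \emph{isomorphism} of Laurent polynomial rings in the variables $\{x_j:j\neq i\}$ (its inverse being another substitution of the same shape), so irreducibility in the Laurent ring is preserved; one then must argue separately that passing from the Laurent ring to the polynomial ring by clearing denominators and stripping monomial factors does not destroy irreducibility. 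Your appeal to ``cancellation in step (ii) was maximal'' is not quite to the point: step (ii) removes common factors with a \emph{specific} element $\hat F_k|_{x_i\leftarrow 0}$, not arbitrary factors, so irreducibility does not follow from maximality of that particular cancellation. If you want a self-contained proof, this is where the real effort goes; otherwise, cite \cite{LP} as the present paper does.
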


\begin{proposition}(Proposition 2.16 of \cite{LP})
If $(\mathbf{x}',\mathbf{F}')$ is obtained from $(\mathbf{x,F})$ by LP mutation at $k$, then $(\mathbf{x,F})$ can be obtained from $(\mathbf{x}',\mathbf{F}')$ by LP mutation at $k$. In this sense, LP mutation is an involution.
\end{proposition}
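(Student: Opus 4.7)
My plan is to set $(\mathbf{x}',\mathbf{F}')=\mu_k(\mathbf{x,F})$ and $(\mathbf{x}'',\mathbf{F}'')=\mu_k(\mathbf{x}',\mathbf{F}')$ and to verify, componentwise, that $(\mathbf{x}'',\mathbf{F}'')$ agrees with $(\mathbf{x,F})$ up to the unit-in-$R$ ambiguity that is built into the definition of LP mutation. I would split the verification by the role of the index $i$ relative to $k$, and, when $i\neq k$, further by whether $x_k\in F_i$ or not.

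For the cluster, $x_i''=x_i'=x_i$ for $i\neq k$ is immediate, so the only substantive point is $x_k''=x_k$. Since $x_k''=\hat{F}_k'/x_k'$ and $x_k'=\hat{F}_k/x_k$, this reduces to proving the identity $\hat{F}_k'=\hat{F}_k$ in $\mathcal{F}$, after identifying $x_i'=x_i$ for $i\neq k$. I would establish this by applying the uniqueness part of the preceding proposition (Lemma 2.4 of \cite{LP}): because $F_k'=F_k$ and because the cleanup step (iii) is designed so that each $F_j'$ and $F_j$ carry the same irreducible content up to units and cluster monomials, the same Laurent polynomial $\hat{F}_k$ satisfies both defining conditions in the new seed. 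This simultaneously gives $F_k''=F_k'=F_k$.

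For an index $i\neq k$ with $x_k\notin F_i$, the first mutation gives $F_i'=r_iF_i\in R[x_1,\ldots,\hat{x}_i,\ldots,x_n]$, which still does not involve $x_k'$; the second mutation then gives $F_i''=r_i'F_i'$, equal to $F_i$ up to a unit. The essential case is $x_k\in F_i$. Here I would first compute $N_k=\hat{F}_k|_{x_i\leftarrow 0}/x_k'$ and, using $\hat{F}_k'=\hat{F}_k$ together with $x_k''=x_k$, deduce $N_k'=\hat{F}_k|_{x_i\leftarrow 0}/x_k$. This yields the key inversion identity
\[
N_k\bigl|_{x_k'\leftarrow N_k'}=\frac{\hat{F}_k|_{x_i\leftarrow 0}}{N_k'}=x_k,
\]
so $G_i|_{x_k'\leftarrow N_k'}=F_i|_{x_k\leftarrow x_k}=F_i$. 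The cleanup procedures (ii)--(iii) are uniquely determined up to a unit in $R$ by the requirement that the final output be a polynomial in the current cluster, irreducible, and not divisible by any cluster variable; since $F_i$ already satisfies these requirements, the composition of the two rounds of cleanup must recover $F_i$ up to a unit.

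The main obstacle, and the step requiring the most care, is the bookkeeping in step (iii): the removal of common factors with $\hat{F}_k|_{x_i\leftarrow 0}$ and the adjustment by a Laurent monomial $M$ are not given in closed form but only by a normalization procedure, so one must argue via uniqueness rather than direct computation. The cleanest path is to invoke irreducibility of the $F_i$ together with the characterization of $F_i'$ as the essentially unique representative of its equivalence class under the two cleanup operations, and then check that $F_i$ itself lies in this class after the second mutation.
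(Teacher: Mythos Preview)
The paper does not give its own proof of this proposition; it is quoted verbatim from Lam--Pylyavskyy \cite{LP} and left unproved here. So there is no argument in the paper to compare against.

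Your outline is the natural one, and the inversion identity $N_k|_{x_k'\leftarrow N_k'}=x_k$ is indeed the heart of the matter for the exchange polynomials $F_i$ with $x_k\in F_i$. The one point that deserves more care is the claim $\hat{F}_k'=\hat{F}_k$. You invoke Lemma~2.4 of \cite{LP}, but that lemma only says that \emph{within a single seed} the collections $\mathbf{F}$ and $\hat{\mathbf{F}}$ determine each other; it does not compare $\hat{F}_k$ across two seeds whose other exchange polynomials $F_j$ and $F_j'$ differ. Concretely, $\hat{F}_k'$ is defined by divisibility conditions involving the \emph{new} $F_j'$ for $j\neq k$, and when $x_k\in F_j$ these $F_j'$ are genuinely different polynomials from $F_j$. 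Showing that the maximal exponents $a_j$ nonetheless coincide requires checking that $F_j'$ and $F_j$ have the same irreducible factor up to a monomial and a unit---which is true, and your parenthetical remark points in that direction, but this is the actual content of the argument rather than a consequence of a prior lemma. In \cite{LP} this verification is carried out explicitly; in your write-up it is asserted rather than proved.
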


\begin{remark}\label{rmk}
It is important to note that because of (ii), $F_i'$ is defined up to an unit in $R$. And this is the motivation to consider LP seeds up to a equivalent relation.
\end{remark}
\begin{definition}\label{defrelation}
Let $\Sigma_{t_1}=(\mathbf{x}_{t_1},\mathbf{F}_{t_1})$ and $\Sigma_{t_2}=(\mathbf{x}_{t_2},\mathbf{F}_{t_2})$ be two LP seeds in $\mathcal F$. $\Sigma_{t_1}$ and $\Sigma_{t_2}$ are {\bf equivalent} if  for each $i \in [1,n]$,
there exist $r_i,r'_i$ which are units in $R$ such that
$x_{i;t_2}=r_ix_{i;t_1}$ and $F_{i;t_2}=r'_iF_{i;t_1}$. %(This is indeed a equivalent relation.)
\end{definition}

Denote by $[\Sigma_{t}]$ the equivalent class of $\Sigma_{t}$, that is, $[\Sigma_{t}]$ is the set of LP seeds which are equivalent to $\Sigma_{t}$.

It is not a clear priori that the LP mutation  $\mu_k(\mathbf{x,F})$  of a LP seed $(\mathbf{x,F})$
is still a LP seed because of the irreducibility requirement for the new exchange polynomials. But it can be seen from the following proposition that $\mu_k(\mathbf{x,F})$ is still a LP seed in $\mathcal F$.

\begin{proposition}(Lemma 3.1 of \cite{LP})
Let  $\Sigma_{t_1}=(\mathbf{x}_{t_1},\mathbf{F}_{t_1})$ and $\Sigma_{t_2}=(\mathbf{x}_{t_2},\mathbf{F}_{t_2})$ be two LP seeds in $\mathcal F$, and $\Sigma_{t_u}=\mu_k(\Sigma_{t_2})~$, $\Sigma_{t_v}=\mu_{k}(\Sigma_{t_1})$.
If $[\Sigma_{t_1}]=[\Sigma_{t_2}]$, then $[\Sigma_{t_v}]=[\Sigma_{t_u}]$.
\end{proposition}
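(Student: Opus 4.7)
The plan is to unpack $[\Sigma_{t_1}] = [\Sigma_{t_2}]$ into explicit units $r_i, r_i' \in R^{\times}$ satisfying $x_{i;t_2} = r_i x_{i;t_1}$ and $F_{i;t_2} = r_i' F_{i;t_1}$ (as elements of $\mathcal F$), and to trace these units through every ingredient that feeds into $\mu_k$. The guiding observation is that the mutation procedure already absorbs a unit of $R$ at several places --- most visibly in part (2) of the definition and in the ``up to a unit'' clauses of parts (3)(ii) and (3)(iii) --- so it suffices to produce, for each new exchange polynomial, \emph{some} choice at $t_u$ that agrees up to a unit with \emph{some} choice at $t_v$.

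First I would verify that $\hat{F}_{j;t_2} = s_j \hat{F}_{j;t_1}$ for some unit $s_j \in R^{\times}$. The exponents $a_k$ appearing in $\hat{F}_j = F_j/(x_1^{a_1}\cdots \hat x_j \cdots x_n^{a_n})$ are defined by a divisibility condition on $F_j|_{x_k \leftarrow F_k/x_k'}$ that is preserved under rescaling $F_j$, $F_k$ and $x_k$ by units of $R$, so the $a_k$ coincide for the two seeds and the rescalings only contribute a unit. Applying this at $j=k$ gives $x'_{k;t_u} = \hat{F}_{k;t_2}/x_{k;t_2} = (s_k/r_k)\, x'_{k;t_v}$, while $x_{i;t_u} = r_i x_{i;t_v}$ for $i \neq k$, so the new clusters are already unit-equivalent.

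For the exchange polynomials I would argue case by case. For $i=k$ we have $F'_{k;t_u} = F_{k;t_2} = r_k' F_{k;t_1} = r_k' F'_{k;t_v}$. If $x_k \notin F_{i;t_1}$, then $F'_{i;t_u}$ and $F'_{i;t_v}$ are unit multiples of $F_{i;t_2}$ and $F_{i;t_1}$ respectively, and the definition already permits us to choose compatible units. The substantive case is $x_k \in F_{i;t_1}$, where I would march through the three-step recipe: by the previous paragraph $N_{k;t_u} = r_k N_{k;t_v}$ in $\mathcal F$, and since $x_{k;t_2} = r_k x_{k;t_1}$, the substitution $x_k \leftarrow N_k$ applied to $F_{i;t_2} = r_i' F_{i;t_1}$ yields $G_{i;t_u} = r_i' G_{i;t_v}$. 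In step (ii), both $G_i$ and $\hat{F}_k|_{x_i \leftarrow 0}$ transform by units between the two seeds, so the common factors stripped off agree up to units and $H_{i;t_u}$ is a unit multiple of $H_{i;t_v}$. In step (iii) the normalizing Laurent monomial $M$ lies in the new cluster variables, which are themselves pairwise unit multiples; since $F'_{i;t_u}$ and $F'_{i;t_v}$ must each be polynomials not divisible by any new cluster variable, the two normalizations necessarily agree up to a unit in $R$, giving $[\Sigma_{t_u}] = [\Sigma_{t_v}]$.

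The main obstacle I expect is the bookkeeping in step (i): one has to check that the unit $r_k$ attached to $x_{k;t_2}$ and the unit $r_k$ that surfaces in $N_{k;t_u}$ cancel cleanly under the substitution $x_k \leftarrow N_k$, so that $G_{i;t_u}$ picks up only the single unit $r_i'$ inherited from $F_{i;t_2}$. Once this is pinned down, the remaining two steps reduce to the observation that ``common factor removal'' and ``Laurent monomial normalization'' are operations defined only modulo $R^{\times}$.
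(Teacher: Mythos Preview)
The paper does not supply its own proof of this proposition; it is stated without proof as a citation of Lemma~3.1 of \cite{LP}. There is therefore nothing in the present paper to compare your argument against.

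That said, your approach is the natural one and is correct in outline: trace the units $r_i,r_i'\in R^\times$ through each ingredient of $\mu_k$ and exploit the fact that steps (2), (3)(ii) and (3)(iii) of the mutation recipe are already defined only up to a unit in $R$. The one point that deserves care, which you correctly flag, is step (3)(i). Here it helps to note that the polynomial identity $F_{j;t_2}(r_1X_1,\dots,r_nX_n)=r_j'\,F_{j;t_1}(X_1,\dots,X_n)$ (obtained from $x_{i;t_2}=r_ix_{i;t_1}$ and algebraic independence of the cluster) shows that the exponents $a_l$ in $\hat F_j$ coincide for the two seeds, so $\hat F_{k;t_2}(X_1,\dots)=s_k\,\hat F_{k;t_1}(r_1^{-1}X_1,\dots)$ for a unit $s_k$. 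Specializing $X_i\mapsto 0$ and dividing by the new formal variable $x_k'$ then gives the claimed relation $N_{k;t_u}=r_kN_{k;t_v}$ after the same change of variables, and the substitution $x_k\leftarrow N_k$ in $F_{i;t_2}=r_i'F_{i;t_1}$ produces $G_{i;t_u}=r_i'\,G_{i;t_v}$ (again after the rescaling $X_l\mapsto r_l^{-1}X_l$, which is an automorphism of the Laurent polynomial ring). From there your handling of steps (ii) and (iii) is fine: common-factor removal and Laurent-monomial normalization are operations well-defined only modulo $R^\times$, so the outputs agree up to a unit.
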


Let $\Sigma_{t}=(\mathbf{x}_t,\mathbf{F}_t)$ be a LP seed in $\mathcal F$. By the above proposition, it is reasonable to define LP mutation of $[\Sigma_{t}]$ at $k$ given by
$\mu_k([\Sigma_{t}]):=[\mu_k(\Sigma_{t})]$.

\begin{definition}
A {\bf Laurent phenomenon (LP) pattern $\mathcal{S}$} in $\mathcal F$ is an assignment for each  LP seed  $(\mathbf{x}_t,\mathbf{F}_t)$ to a vertex $t$ of the $n$-regular tree $\mathbb T_n$, such that for any edge $t^{~\underline{\quad k \quad}}~ t',~(\mathbf{x}_{t'},\mathbf{F}_{t'})=\mu_k(\mathbf{x}_t,\mathbf{F}_t)$.
\end{definition}

We always denote by
$\mathbf{x}_t=\{x_{1;t},\cdots,x_{n;t}\}$ and $\mathbf{F}_t=\{F_{1;t},\cdots,F_{n;t}\}$.

\begin{definition}
Let $\mathcal{S}$ be a LP pattern, the {\bf Laurent phenomenon (LP) algebra $\mathcal{A}(\mathcal{S})$} (of rank $n$) associated with  $\mathcal{S}$ is the $R$-subalgebra of $\mathcal F$ generated by all the cluster variables in the seeds of $\mathcal{S}$.
\end{definition}

If $\Sigma=(\mathbf{x,F})$ is any seed in $\mathcal{F}$, we shall write $\mathcal{A}(\Sigma)$ to mean the LP algebra $\mathcal{A}(\mathcal{S})$ associated with $\mathcal{S}$ containing the seed $\Sigma$ .

\begin{theorem}(Theorem 5.1 of \cite{LP}, Laurent phenomenon)
\label{thmlaurent}
Let $\mathcal{A}(\mathcal{S})$ be a LP algebra, and $(\mathbf{x}_{t_0},\mathbf{F}_{t_0})$ be a LP seed of $\mathcal{A}(\mathcal{S})$. Then any cluster variable $x_{i;t}$ of $\mathcal{A}(\mathcal{S})$ is in the Laurent polynomial ring $R(t_0^{\pm1}):=R[x_{1;t_0}^{\pm1},\cdots,x_{n;t_0}^{\pm1}]$.
\end{theorem}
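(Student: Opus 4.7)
The plan is to argue by induction on the tree distance $d=d_{\mathbb{T}_n}(t_0,t)$. When $d=0$, the claim is immediate. When $d=1$, if $t_0$ is joined to $t$ by an edge labeled $k$, then $x_{i;t}=x_{i;t_0}$ for $i\neq k$, and the only new cluster variable is $x_{k;t}=\hat{F}_{k;t_0}/x_{k;t_0}$, which lies in $R(t_0^{\pm1})$ directly from the definition of the exchange Laurent polynomial as an element of $R[x_{1;t_0}^{\pm1},\dots,x_{n;t_0}^{\pm1}]$.

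For the inductive step, fix a path $t_0=s_0\underline{\quad i_1\quad}s_1\underline{\quad i_2\quad}\cdots\underline{\quad i_d\quad}s_d=t$, with the final mutation at direction $k=i_d$, so that the only nontrivial variable is $x_{k;t}=\hat{F}_{k;s_{d-1}}/x_{k;s_{d-1}}$. The inductive hypothesis applied from $t_0$ gives $x_{k;s_{d-1}},\hat{F}_{k;s_{d-1}}\in R(t_0^{\pm1})$, but this only yields $x_{k;t}$ as a rational function in $\mathbf{x}_{t_0}$: dividing Laurent polynomials is the source of all the difficulty. To circumvent it I would employ a caterpillar-style rearrangement, borrowed from the Fomin–Zelevinsky template: reduce to the case in which the \emph{first} mutation from $t_0$ is at direction $k$, i.e. to a path $t_0\underline{\quad k\quad}t'_1\underline{\quad j_2\quad}\cdots\underline{\quad j_d\quad}t$. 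Then $\mathbf{x}_{t'_1}$ differs from $\mathbf{x}_{t_0}$ only in position $k$, with $x_{k;t'_1}=\hat{F}_{k;t_0}/x_{k;t_0}\in R(t_0^{\pm1})$, and by induction applied with base point $t'_1$, the target $x_{k;t}$ lies in $R({t'_1}^{\pm1})$; the remaining task is to substitute $x_{k;t'_1}=\hat{F}_{k;t_0}/x_{k;t_0}$ back and verify that the result has only monomial denominators in $\mathbf{x}_{t_0}$.

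The main obstacle is precisely this substitution: a priori, expanding a Laurent polynomial in $\mathbf{x}_{t'_1}$ introduces powers of $\hat{F}_{k;t_0}$ in the denominator, and one must rule out any genuine pole along the hypersurface $\hat{F}_{k;t_0}=0$. I expect this to be the technical heart of the proof, and the ingredients to dissolve it are (a) Proposition \ref{2.7ofLPA}, which controls which variables can appear alongside $x_k$ inside the relevant $\hat{F}$'s, and (b) the careful construction of $H_i$ in the mutation rule (step (ii) of the definition of $\mu_k$), which is tailored exactly to strip from $G_i$ the common factors with $\hat{F}_k|_{x_i\leftarrow0}$. Together with the irreducibility of each $F_k$ and the unit-up-to ambiguity of Remark \ref{rmk}, these axioms force the $\hat{F}_{k;t_0}$-factors in the numerator of the substituted expression to cancel against those in the denominator. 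Induction on $d$ then closes, completing the proof.
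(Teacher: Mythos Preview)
The paper does not contain a proof of this statement: Theorem~\ref{thmlaurent} is quoted verbatim as Theorem~5.1 of \cite{LP} and is used as a black box throughout, so there is nothing in the paper to compare your argument against.

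That said, your sketch has a genuine gap. The step ``reduce to the case in which the \emph{first} mutation from $t_0$ is at direction $k$'' is not a valid manipulation: LP mutations at different indices do not commute, so you cannot simply rearrange the path $s_0\to\cdots\to s_d$ into one that begins with $\mu_k$. The Fomin--Zelevinsky caterpillar argument you are alluding to does not work by permuting the sequence of mutations; rather, it establishes a three-seed statement (Laurentness simultaneously in the clusters at $t_0$, at $\mu_k(t_0)$, and at one further neighbor) strong enough to propagate under a single mutation, and the induction is on that stronger statement. In the LP setting the analogue in \cite{LP} is likewise a strengthened inductive hypothesis involving several adjacent seeds, together with a delicate analysis of how the $\hat F$'s interact under one mutation; the mutation rule's step~(ii) and Proposition~\ref{2.7ofLPA} are indeed ingredients, but the actual cancellation of potential $\hat F_{k;t_0}$-denominators is the entire substance of the proof, and your final paragraph asserts it rather than carries it out. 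As written, the induction does not close.
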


\begin{definition}
Let $\Sigma=(\mathbf{x,F})$ be a LP seed of rank $n$ and $k\in[1,n]$. A new seed $\Sigma^\ast=(\mathbf{x^\ast,F^\ast})$ of rank $n-1$ is defined as follows:
\begin{enumerate}
  \item let $R^\ast=R[x_k^{\pm1}]$.
  \item $\mathbf{x^\ast}=\mathbf{x}-\{x_k\}$.
  \item let $\mathbf{F^\ast}=\{F_j^\ast|j\in [1,n]-k,\ F_j^\ast=F_jx_k^a,\ where\ a\ is\ the\ power\ of\ x_k\ in\ \hat{F}_k\}$.
\end{enumerate}
The seed $\Sigma^\ast$ is in fact a LP seed, then $\Sigma^\ast$ is called {\bf the freezing of the LP seed }$\Sigma$ at $x_k$.
$\mathcal{A}(\Sigma^\ast)\subset \mathcal{F}=\text{Frac}(R^\ast[x_1,\dots,\hat{x_k},\dots,x_n])$ is defined to be the subalgebra generated by all the cluster variables from LP seeds mutation-equivalent to $\Sigma^\ast$.
Then $\mathcal{A}(\Sigma^\ast)$ is called the {\bf freezing of the LP algebra }$\mathcal{A}(\Sigma)$ at $x_k$.
\end{definition}

\begin{example}
Consider the LP seed $\Sigma=(\mathbf{x,F})=\{(a,b+1),(b,a+c),(c,b+1)\}$ over $R=\mathbb{Z}$ from Example \ref{eg}.
We produce the freezing of $(\mathbf{x,F})$ at $c$ as follows:
first, remove $(c,b+1)$;
next, since the powers of $c$ in $\hat{F_a}$ and $\hat{F_b}$ are -1 and 0 respectively, we have $F_a^\ast=F_ac^{-1}=\frac{b+1}{c}$ and $F_b^\ast=F_b$.
Then the LP seed $\Sigma^\ast$ are $\{(a,\frac{b+1}{c}),(b,a+c)\}$ over $\mathbb{Z}[c^{\pm1}]$.
\end{example}

\begin{proposition}(Lemma 3.1 of \cite{LP})
The algebra $\mathcal{A}(\Sigma^\ast)$ is a LP algebra.
\end{proposition}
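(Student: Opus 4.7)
The plan is to verify directly that $\Sigma^\ast$ fulfils all axioms of a LP seed over the enlarged coefficient ring $R^\ast=R[x_k^{\pm1}]$; once this is established, $\mathcal{A}(\Sigma^\ast)$ is a LP algebra purely by definition. Four conditions must be inspected: (A) $R^\ast$ is a UFD over $\mathbb{Z}$; (B) $\mathbf{x}^\ast=\mathbf{x}\setminus\{x_k\}$ is a transcendence basis for the ambient field over $\operatorname{Frac}(R^\ast)$; (C) each $F_j^\ast$ is irreducible in $R^\ast[x_1,\ldots,\hat{x}_k,\ldots,x_n]$; and (D) for $i,j\in[1,n]\setminus\{k\}$, $F_i^\ast$ does not depend on $x_i$ and is not divisible by $x_j$.

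Three of these should be routine. For (A), I would invoke Gauss's lemma to conclude that $R[x_k]$ is a UFD, then observe that localising at the multiplicative set $\{x_k^m\}_{m\geq 0}$ preserves the UFD property. Condition (B) will be immediate: moving $x_k$ from the transcendence basis into the base ring drops the transcendence degree by exactly one, so the remaining $x_i$ stay algebraically independent over $\operatorname{Frac}(R^\ast)=\operatorname{Frac}(R)(x_k)$ and still generate $\mathcal{F}$ as a field. Condition (D) follows by construction, since $F_j^\ast$ differs from $F_j$ only by a Laurent monomial in $x_k$; hence $F_j^\ast$ inherits from $F_j$ the properties of being independent of $x_j$ and not divisible by any $x_i$ with $i\neq k$.

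The main obstacle I expect is (C). Because $x_k$ is a unit in $R^\ast$, the irreducibility of $F_j^\ast$ is equivalent to that of $F_j$ inside
\[
R^\ast[x_1,\ldots,\hat{x}_k,\ldots,x_n]=R[x_k^{\pm1},x_1,\ldots,\hat{x}_k,\ldots,x_n].
\]
The strategy I have in mind is to take any purported factorisation $F_j=AB$ in the above ring and clear negative $x_k$-powers by multiplying by a sufficiently large $x_k^N$, producing a factorisation $x_k^N F_j=A'B'$ in $R[x_1,\ldots,x_n]$. Since $F_j$ is irreducible there and, by the LP seed axiom, $x_k\nmid F_j$, each copy of the irreducible element $x_k$ must land in a single factor while the whole $F_j$ concentrates in the other; unwinding the multiplier $x_k^N$, one of $A,B$ is forced to be a unit in $R^\ast$. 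Once (C) is secured, the combined check of (A)--(D) establishes that $\Sigma^\ast$ is a bona fide LP seed in the ambient field $\mathcal{F}^\ast=\operatorname{Frac}(R^\ast[x_1,\ldots,\hat{x}_k,\ldots,x_n])$, and hence $\mathcal{A}(\Sigma^\ast)$ is a LP algebra.
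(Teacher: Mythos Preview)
The paper does not actually supply a proof of this proposition: it is stated with the attribution ``(Lemma 3.1 of \cite{LP})'' and no argument is given, the result being imported from Lam--Pylyavskyy. Consequently there is no in-paper proof against which to compare your proposal.

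That said, your plan is correct and is essentially what the cited source does. A couple of small remarks. First, in (D) you should also note that $x_i\nmid F_j^\ast$ for $i\neq k$ requires that $x_i$ remains prime after localising at $x_k$; this is the same Gauss/localisation argument you use in (C), so it costs nothing extra. Second, the paper already asserts in the definition of freezing that ``The seed $\Sigma^\ast$ is in fact a LP seed'', so the proposition as stated is literally the tautology that the algebra attached to a LP seed is a LP algebra; the actual content is precisely your verification (A)--(D) that $\Sigma^\ast$ is a LP seed, which the paper defers to \cite{LP}. Your argument for (C) via clearing denominators and invoking unique factorisation in $R[x_1,\ldots,x_n]$ is the standard and correct way to see that an irreducible not associate to $x_k$ stays irreducible after inverting $x_k$.
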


\begin{corollary}
The freezing of the LP seed at $x_i$ is compatible with the mutation in direction $j$ for $j\neq i$.
\end{corollary}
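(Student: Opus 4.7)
The plan is to verify that the two orders of operations — freeze $\Sigma$ at $x_i$ and then mutate in direction $j$, versus mutate in direction $j$ and then freeze at $x_i$ — produce equivalent LP seeds in $\mathrm{Frac}(R^{*}[\mathbf{x}^{*}])$, where $R^{*}=R[x_i^{\pm 1}]$ and $\mathbf{x}^{*}=\mathbf{x}\setminus\{x_i\}$. Since $j\neq i$, neither operation disturbs the other's target index, so the verification reduces to checking that (a) the new cluster variable produced at position $j$ is the same in both orders, and (b) the updated exchange polynomial at each position $\ell\neq i,j$ agrees up to a unit in $R^{*}$.

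The crux is the identity $\hat{F}_{\ell}^{\,*}=\hat{F}_{\ell}$ for every $\ell\neq i$, viewed as Laurent polynomials over $R^{*}$. I would prove this by comparing defining maximality conditions: the original $a_k$ is maximal with $F_k^{a_k}$ dividing $P:=F_\ell|_{x_k\leftarrow F_k/x_k'}$, whereas the new $b_k$ is maximal with $(F_k^{*})^{b_k}$ dividing $Q:=F_\ell^{*}|_{x_k\leftarrow F_k^{*}/x_k'}$. Two observations close the gap: $F_k^{*}$ and $F_k$ are associates in $R^{*}$, so both divisibility conditions amount to divisibility by powers of $F_k$; and $Q$ differs from $P$ (up to the unit $x_i^{-c_\ell}$) by applying the automorphism $x_k'\mapsto x_k'\cdot x_i^{c_k}$ of the ambient Laurent polynomial ring, which fixes $F_k$. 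Combined with the UFD property of $R^{*}[\mathbf{x}^{*}]$ and $\gcd(F_k,x_i)=1$, this forces $b_k=a_k$ and the two Laurent polynomials coincide. It follows at once that $x_j^{*\prime}=\hat{F}_j^{\,*}/x_j=\hat{F}_j/x_j=x_j'$, settling (a).

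For (b) I would trace the three-step mutation procedure in each order. When $x_j\notin F_\ell$, the update is multiplication by a unit in the relevant ground ring, and the subsequent freezing monomial absorbs any discrepancy. When $x_j\in F_\ell$, the substitution $N_j=\hat{F}_j|_{x_\ell\leftarrow 0}/x_j'$ is the same in both orders by the key identity, so $G_\ell^{*}$ and $G_\ell$ differ only by the freezing monomial in $x_i$; removing common factors with $\hat{F}_j|_{x_\ell\leftarrow 0}$ commutes with this unit rescaling, and the final normalizing monomial $M$ absorbs any residual $x_i$-power into $R^{*}$.

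The main obstacle is the careful tracking of $x_i$-exponents through the normalization steps and the transfer of divisibility between $R[\mathbf{x}]$ and $R^{*}[\mathbf{x}^{*}]$. In particular, one must confirm that irreducibility of $F_\ell'$ in $R[\mathbf{x}]$ corresponds to irreducibility of $F_\ell^{*\prime}$ in $R^{*}[\mathbf{x}^{*}]$ up to units in $R^{*}$. Because units of $R^{*}$ are exactly units of $R$ times integral powers of $x_i$, the residual ambiguity is precisely the LP seed equivalence in the frozen setting, and the two orders produce equivalent LP seeds.
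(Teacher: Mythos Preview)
The paper does not supply its own proof of this corollary: it is stated without argument, immediately following the proposition (quoted as Lemma~3.1 of \cite{LP}) that $\mathcal{A}(\Sigma^{\ast})$ is an LP algebra, and the verification is implicitly deferred to Lam--Pylyavskyy.

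Your proposal is therefore a genuinely different route: a direct, self-contained computation comparing the two orders of operation. The anchor identity $\hat{F}_{\ell}^{\,*}=\hat{F}_{\ell}$ for $\ell\neq i$ is exactly the right observation, and your argument for it---transporting the maximality condition for $a_k$ to that for $b_k$ via an automorphism of the ambient Laurent ring that fixes $F_k$, together with $\gcd(F_k,x_i)=1$ in the UFD---is sound. (A cosmetic point: to pass from $P$ to $Q$ one actually sends $x_k'\mapsto x_k'/x_i^{c_k}$ rather than $x_k'\mapsto x_k'\cdot x_i^{c_k}$; this does not affect the conclusion.) Your trace of the three mutation steps is also correct in outline: since $R^{*}[\mathbf{x}^{*}]=R[\mathbf{x}][x_i^{-1}]$ is a localization at an element coprime to every exchange polynomial, common-factor removal in step~(ii), irreducibility, and the normalizing monomial in step~(iii) all transfer, with any discrepancy in $x_i$-powers absorbed into the unit group of $R^{*}$.

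What your approach buys is transparency and independence from the reference; what the paper's approach buys is brevity. Both are legitimate, and your sketch would expand into a complete proof with routine bookkeeping.
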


\subsection{Cluster algebra}
Recall that an integer matrix $B_{n\times n}=(b_{ij})$ is  called \textbf{skew-symmetrizable} if there is a positive integer diagonal matrix $D$ such that $DB$ is skew-symmetric, where $D$ is said to be the  \textbf{skew-symmetrizer} of $B$.
$B_{n\times n}=(b_{ij})$ is {\bf sign-skew-symmetric} if $b_{ij}b_{ji}<0$ or $b_{ij}=b_{ji}=0$ for any $i,j\in [1,n]$.
A sign-skew-symmetric $B$ is \textbf{totally sign-skew-symmetric} if any matrix $B'$ obtained from $B$ by a sequence of mutations is sign-skew-symmetric.
It is known that skew-symmetrizable integer matrices are always totally sign-skew-symmetric.

The \textbf{diagram} for a sign-skew-symmetric matrix $B_{n\times n}$ is the directed graph $\Gamma(B)$ with the vertices $1,2,\cdots,n$ and the directed edges from $i$ to $j$ if $b_{ij}>0$. $B_{n\times n}$ is called {\bf acyclic} if $\Gamma(B)$ has no oriented cycles. As shown in \cite{HL}, an acyclic sign-skew-symmetric integer matrix $B$ is always totally sign-skew-symmetric.

Let $\mathbb{P}$ be the coefficient group, its group ring $\mathbb{Z}\mathbb{P}$ is a domain \cite{FZ}.
We take an ambient field $\mathcal F$  to be the field of rational functions in $n$ independent variables with coefficients in $\mathbb{Z}\mathbb{P}$.

\begin{definition}
A \textbf{cluster seed} in $\mathcal F$ is a triplet $\Sigma=(\mathbf{x},\mathbf{y},B)$ such that

(i)  $\mathbf{x}=\{x_1,\cdots, x_n\}$ is a transcendence basis for $\mathcal F$ over Frac($\mathbb {ZP}$). $\mathbf{x}$ is called the  {\bf cluster} of  $(\mathbf{x},\mathbf{y},B)$ and $\{x_1\cdots,x_n\}$  are called {\bf cluster variables}.

(ii) $\mathbf{y}=\{y_1,\cdots,y_n\}$ is a subset of  $\mathbb{P}$, where $\{y_1,\cdots,y_n\}$ are called {\bf coefficients}.

(iii) $B=(b_{ij})$ is a $n\times n$ totally sign-skew-symmetric matrix, called an {\bf exchange matrix}.
\end{definition}

Let $(\mathbf{x},\mathbf{y}, B)$ be a cluster seed in $\mathcal F$, one can associate binomials $\{F_1,\cdots,F_n\}$ defined by
\begin{eqnarray}
F_j=\frac{y_j}{1\oplus y_j}\prod\limits_{b_{ij}>0}x_i^{b_{ij}}+\frac{y_j}{1\oplus y_j}\prod\limits_{b_{ij}<0}x_i^{-b_{ij}}.\nonumber
\end{eqnarray}
$\{F_1,\cdots,F_n\}$ are called the {\bf exchange polynomials} of $(\mathbf{x},\mathbf{y}, B)$.

Note that the coefficients and the exchange matrices in a cluster algebra are used for providing the exchange polynomials and explaining how to produce new exchange polynomials when doing a mutation (see Definition \ref{defmutation}) on a cluster seed.

%Let $(\mathbf{x},\mathbf{y},B)$ be a cluster seed and $\mathbb {ZP}=\mathbb{Z}$, if  all elements in $\mathbf{y}$ are $1$, we say $(\mathbf{x},\mathbf{y},B)$ is a \textbf{cluster seed with trivial coefficient}, and denote such seed by $(\mathbf{x},B)$.

\begin{definition}\label{defmutation}
Let $\Sigma=(\mathbf{x},\mathbf{y},B)$ be a cluster seed in $\mathcal F$. Define the {\bf mutation}  of  $\Sigma$ in the direction $k\in[1,n]$ as a new triple $\Sigma'=(\mathbf{x}', \mathbf{y}',B'):=\mu_k(\Sigma)$ in $\mathcal F$, where
$$x_i'=\begin{cases} F_k/x_k~& i=k\\x_i~& i\neq k.\end{cases},~
 y_i'=\begin{cases} y_k^{-1}~& i=k\\ y_iy_k^{max(b_{ki},0)}(1\bigoplus y_k)^{-b_{ki}}~& i\neq k.\end{cases},$$
 $$\text{ and } b_{ij}'=\begin{cases}-b_{ij}~& i=k\text{ or } j=k\\ b_{ij}+sgn(b_{ik})max(b_{ik}b_{kj},0)~&otherwise\end{cases}.$$
\end{definition}

It can be seen that $\mu_k(\Sigma)$ is also a cluster seed and mutation of a cluster seed is an involution, that is, $\mu_k(\mu_k(\Sigma))=\Sigma$.

\begin{definition}
A {\bf cluster pattern} $\mathcal{S}$ is an assignment of a seed  $\Sigma_t=(\mathbf{x}_t,\mathbf{y}_t,B_t)$ to every vertex $t$ of the $n$-regular tree $\mathbb T_n$, such that for any edge $t^{~\underline{\quad k \quad}}~ t',~\Sigma'_{t'}=(\mathbf{x}_{t'},\mathbf{y}_{t'},B_{t'})=\mu_k(\Sigma_t)$.
\end{definition}
We always denote by $\mathbf{x}_t=(x_{1;t},\cdots, x_{n;t}),~ \mathbf{y}_t=(y_{1;t},\cdots, y_{n;t}), ~B_t=(b_{ij}^t).$
\begin{definition} Let  $\mathcal{S}$ be a cluster pattern,   the {\bf cluster algebra} $\mathcal{A}(\mathcal{S})$ (of rank $n$) associated with the given cluster pattern $\mathcal{S}$ is the $\mathbb {ZP}$-subalgebra of the field $\mathcal F$ generated by all cluster variables of  $\mathcal{S}$.
 \end{definition}

\begin{theorem}(Theorem 3.1 of \cite{FZ}, Laurent phenomenon)
Let $\mathcal{A}(\mathcal{S})$ be a cluster algebra, and $\Sigma_{t_0}=(\mathbf{x}_{t_0},\mathbf{y}_{t_0},B_{t_0})$ be a cluster seed of $\mathcal{A}(\mathcal{S})$. Then any cluster variable $x_{i;t}$ of $\mathcal{A}(\mathcal{S})$ is in the Laurent polynomial ring $\mathbb {ZP}(t_0^{\pm1}):=\mathbb {ZP}[x_{1;t_0}^{\pm1},\cdots,x_{n;t_0}^{\pm1}]$.
\end{theorem}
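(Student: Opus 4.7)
My plan is to proceed by induction on the distance $d = d_{\mathbb{T}_n}(t_0, t)$. The base case $d = 0$ is immediate, since $x_{i;t_0} \in \mathbb{ZP}[\mathbf{x}_{t_0}^{\pm 1}]$ by inspection. For the inductive step, choose $t'$ at distance $d$ adjacent to $t$, with the edge $t' \text{---} t$ labeled $k$. If $i \neq k$ then $x_{i;t} = x_{i;t'}$ and we conclude by the inductive hypothesis. The essential case is $i = k$, where the exchange relation $x_{k;t}\, x_{k;t'} = F_{k;t'}$ gives $x_{k;t} = F_{k;t'}/x_{k;t'}$. By induction every cluster variable of $\Sigma_{t'}$ lies in $\mathbb{ZP}[\mathbf{x}_{t_0}^{\pm 1}]$, so $F_{k;t'}$ does too; the question is whether $x_{k;t'}$ divides $F_{k;t'}$ in the Laurent polynomial ring after substitution.

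The main obstacle is precisely this divisibility, since in general the quotient of two Laurent polynomials is a rational function with possibly non-monomial poles. To handle it I would invoke the caterpillar argument from \cite{FZ}: fix the geodesic $t_0 = s_0, s_1, \dots, s_{d+1} = t$ in $\mathbb{T}_n$ with labels $k_0, \dots, k_d$, embed it into an auxiliary caterpillar subtree whose spine is this geodesic and whose legs are pendant vertices attached in the remaining $n-1$ directions at each $s_i$, and carry out a more refined induction along the caterpillar rather than directly along the path. The essential inputs are that $\mathbb{ZP}[x_{1;t_0}^{\pm 1}, \dots, x_{n;t_0}^{\pm 1}]$ is a UFD and that, in every exchange binomial $F_{k;s_i}$, the two monomials involve disjoint subsets of $\mathbf{x}_{s_i}$ and are therefore coprime. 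A careful accounting of how each $x_{k;s_i}$ appears as a factor of the Laurent expression of $F_{k;s_{i+1}}$ in $\mathbf{x}_{t_0}$ then produces the required divisibility at every step, completing the induction.

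A shorter alternative is to deduce this theorem directly from the LP Laurent phenomenon already recorded as Theorem \ref{thmlaurent}. Every cluster seed $(\mathbf{x}, \mathbf{y}, B)$ translates into an LP seed $(\mathbf{x}, \mathbf{F})$ over the UFD $R = \mathbb{ZP}$, where $F_k$ is the binomial displayed just before Definition \ref{defmutation}. A short check shows $\hat{F}_k = F_k$, because the two monomials of $F_k$ involve disjoint variables, so no positive power of any $F_\ell$ can divide $F_k|_{x_\ell \leftarrow F_\ell/x_\ell'}$ and each exponent $a_\ell$ in the definition of $\hat{F}_k$ is zero. The remaining, somewhat tedious verification is that the LP mutation procedure (i)--(iii) reproduces, up to units in $\mathbb{ZP}$, the binomial $F_i'$ dictated by the matrix-mutation rule for $B'$; granting this, Theorem \ref{thmlaurent} immediately yields $x_{i;t} \in R[\mathbf{x}_{t_0}^{\pm 1}] = \mathbb{ZP}[\mathbf{x}_{t_0}^{\pm 1}]$. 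I expect the matching of mutation rules to be the main (though routine) bookkeeping step in this alternative route.
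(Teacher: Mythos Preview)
The paper does not prove this theorem at all; it is simply stated with attribution to Theorem~3.1 of \cite{FZ}, so there is no in-paper proof to compare against. Your first route (the caterpillar induction) is a faithful outline of the original Fomin--Zelevinsky argument and is fine as a sketch.

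Your second route, however, has a genuine gap. You assert that ``every cluster seed $(\mathbf{x},\mathbf{y},B)$ translates into an LP seed $(\mathbf{x},\mathbf{F})$ over $R=\mathbb{ZP}$,'' but the definition of an LP seed requires each $F_i$ to be \emph{irreducible} in $R[x_1,\dots,x_n]$, and cluster exchange binomials need not be irreducible. The paper itself makes this point explicitly in Example~\ref{expoly}: for $B=\begin{pmatrix}0&3\\-3&0\end{pmatrix}$ one has $F_1=x_2^3+1=(x_2+1)(x_2^2-x_2+1)$, which is reducible, so $(\mathbf{x},\mathbf{F})$ is not an LP seed and Theorem~\ref{thmlaurent} does not apply. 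The only general statement available in this direction is the cited result that cluster algebras \emph{with principal coefficients} are LP algebras; for arbitrary coefficients the deduction from Theorem~\ref{thmlaurent} fails. If you want to salvage this approach you would need either to restrict to principal coefficients and then transfer the result to general coefficients via a separate specialization argument, or to abandon it and rely on your first route.
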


 \begin{example}\label{expoly}
Let $B=\begin{pmatrix}0&3\\-3&0\end{pmatrix}$, then the exchange polynomials of the  cluster seed $(\mathbf{x}, B)$ are the following two polynomials
$$F_1=x_2^3+1=(x_2+1)(x_2^2-x_2+1),$$
$$F_2=x_1^3+1=(x_1+1)(x_1^2-x_1+1).$$
\end{example}
It is easy to see that the exchange polynomials $F_1, F_2$ of $(\mathbf{x},B)$ are both reducible in the above example. Thus
the cluster $\mathbf{x}$ and the exchange polynomial $\mathbf{F}$ of $(\mathbf{x},B)$ can not define a LP seed.
From \cite{LP}, we know that sometimes a cluster algebra defines a LP algebra indeed.

\begin{theorem}(Theorem 4.5 of \cite{LP})
  Every cluster algebra with principal coefficients is a Laurent phenomenon algebra.
\end{theorem}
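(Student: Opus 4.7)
The plan is to exhibit a principal-coefficient cluster algebra $\mathcal{A}(\mathcal{S})$ as the LP algebra generated by its initial seed, in three steps: (a) verify that the initial cluster seed is a valid LP seed; (b) show that a single cluster mutation agrees with the corresponding LP mutation up to the unit equivalence of Definition \ref{defrelation}; (c) induct along $\mathbb{T}_n$. Take $R = \mathbb{Z}\mathbb{P}$, which for the tropical semifield $\mathbb{P} = \mathrm{Trop}(y_1, \ldots, y_n)$ is the Laurent polynomial ring $\mathbb{Z}[y_1^{\pm 1}, \ldots, y_n^{\pm 1}]$, hence a UFD. Principal coefficients force $1 \oplus y_j = 1$, so (up to a unit in $R$) the exchange polynomial of the initial seed is the binomial $F_j = \prod_{b_{ij}>0} x_i^{b_{ij}} + y_j \prod_{b_{ij}<0} x_i^{-b_{ij}}$. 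Its two monomials have disjoint support in $x_1, \ldots, x_n$ because the positive and negative entries of the $j$-th column of $B$ are indexed by disjoint sets, and $y_j$ is a unit of $R$; hence each $F_j$ is an irreducible binomial in $R[x_1, \ldots, x_n]$ with $x_i \nmid F_j$ for every $i$, and $b_{jj} = 0$ ensures that $F_j$ does not involve $x_j$. Thus $(\mathbf{x}_{t_0}, \mathbf{F}_{t_0})$ is a legitimate LP seed.

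Next I compute the exchange Laurent polynomials at this seed and compare the two mutations. The maximal $a_k$ such that $F_k^{a_k}$ divides $F_j|_{x_k \leftarrow F_k/x_k'}$ is zero, because $F_k$ and $F_j$ are distinct irreducibles in $R[x_1, \ldots, x_n]$ (distinguished by which $x_i$ they involve); hence $\hat{F}_j = F_j$. Step (1) of LP mutation then gives $x_k' = \hat{F}_k/x_k = F_k/x_k$, which matches the cluster mutation on the nose. For $j \neq k$ with $x_k \in F_j$, the quantity $\hat{F}_k|_{x_j \leftarrow 0}$ is precisely the monomial of $F_k$ not containing $x_j$, selected by $\mathrm{sgn}(b_{jk})$. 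Substituting $x_k \leftarrow \hat{F}_k|_{x_j \leftarrow 0}/x_k'$ into $F_j$, removing common monomial factors with $\hat{F}_k|_{x_j \leftarrow 0}$, and multiplying by the normalizing Laurent monomial $M$ from step (iii) produces a new binomial which, after a case analysis on the signs of $b_{ik}$ and $b_{jk}$, recovers the Fomin--Zelevinsky matrix mutation $b_{ij}' = b_{ij} + \mathrm{sgn}(b_{ik})\max(b_{ik}b_{kj}, 0)$ in the exponents and the principal-coefficient rule $y_j' = y_j y_k^{\max(b_{kj}, 0)}$ in the coefficient (using $1 \oplus y_k = 1$). Thus the output of LP mutation is again of cluster-algebraic binomial form.

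Iterating along $\mathbb{T}_n$, each cluster seed $\Sigma_t$ of $\mathcal{A}(\mathcal{S})$ is equivalent (in the sense of Definition \ref{defrelation}) to the LP seed reached by the same path from $(\mathbf{x}_{t_0}, \mathbf{F}_{t_0})$; hence the set of cluster variables of $\mathcal{A}(\mathcal{S})$ coincides with the set of cluster variables of the ambient LP algebra, and the two $R$-subalgebras of $\mathcal{F}$ agree. The main obstacle is the case analysis in the middle step: one must track the normalizing Laurent monomial $M$ in step (iii) of LP mutation and verify sign by sign that it reproduces the tropical $\max$ appearing in the Fomin--Zelevinsky rules. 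This is precisely where the unit ambiguity of Definition \ref{defrelation} absorbs the choices left open in the LP mutation definition, and where the fact that $y_k$ is a unit in $R$ (not merely in $\mathbb{P}$) is essential.
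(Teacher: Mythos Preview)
The paper does not prove this statement: it is quoted verbatim as Theorem~4.5 of \cite{LP} and cited without argument, serving only to explain why cluster algebras with principal coefficients fall under the LP framework. So there is no ``paper's own proof'' to compare against; your proposal goes well beyond what the present paper does.

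As an independent sketch your outline is broadly correct and matches the strategy in \cite{LP}, but one step is argued incorrectly. You write that the two monomials of $F_j$ have disjoint $x$-support ``hence each $F_j$ is an irreducible binomial''. Disjoint support alone does not force irreducibility: the paper's own Example~\ref{expoly} exhibits $F_1=x_2^3+1$, a coefficient-free binomial with disjoint support that factors. What actually makes the principal-coefficient exchange polynomial irreducible is that $y_j$ appears in exactly one of the two terms and to the first power, so $F_j$ is linear in $y_j$ over $\mathbb Z[y_1,\dots,\hat y_j,\dots,y_n][x_1,\dots,x_n]$ with coprime coefficients; irreducibility there survives the localisation inverting the $y_i$. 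You should state this explicitly, since it is also what you need inductively: after mutation the new coefficient $y_j'$ is a Laurent monomial in the $y_i$, and you must again argue irreducibility of the new binomial (either via sign-coherence of $c$-vectors, which gives one coefficient equal to $1$, or by invoking Proposition~2.15 of \cite{LP} once you know the LP and cluster mutations agree).

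The rest of your plan is sound. Your observation that $\hat F_k|_{x_j\leftarrow 0}$ is a single monomial (because $b_{jk}\neq 0$ whenever $x_k\in F_j$) is exactly what reduces step~(ii) of LP mutation to clearing a monomial factor, and the sign-by-sign bookkeeping you flag as the ``main obstacle'' is indeed the heart of the verification carried out in \cite{LP}.
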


\section{LP seeds determined by either clusters or mutations}
\subsection{On Theorem \ref{conj} (1)}

\begin{theorem} \label{mainthm1}
Let $\mathcal{A}(\mathcal{S})$ be a LP algebra of rank $n$, and $(\mathbf{x}_{t_1},\mathbf{F}_{t_1}), (\mathbf{x}_{t_2},\mathbf{F}_{t_2})$ be two LP seeds of $\mathcal{A}(\mathcal{S})$.
\begin{enumerate}
  \item If there exists a permutation $\sigma$ of $[1,n]$ and an unit $r_i\in R$ such that $x_{i;t_2}=r_ix_{\sigma(i);t_1}$ for $i\in [1,n]$,
then $F_{i;t_2}=r'_iF_{\sigma(i);t_1}$ as polynomials for a certain unit $r'_i$ in $R$.
  \item each LP seed is uniquely defined by its cluster.
\end{enumerate}
\end{theorem}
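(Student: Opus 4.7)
My plan is to reduce Part (2) to Part (1): if $\mathbf{x}_{t_1}$ and $\mathbf{x}_{t_2}$ agree as sets up to units in $R$, there exist a permutation $\sigma$ and units $r_i\in R$ with $x_{i;t_2}=r_ix_{\sigma(i);t_1}$, whereupon Part (1) supplies the matching relations on exchange polynomials and the two seeds are equivalent in the sense of Definition \ref{defrelation}. So the real work lies in Part (1). After relabeling by $\sigma$ and absorbing the units $r_i$ into each cluster variable, I may assume $\mathbf{x}_{t_1}=\mathbf{x}_{t_2}=:\{x_1,\ldots,x_n\}$, and the task becomes to show $F_{i;t_2}=u_iF_{i;t_1}$ for some $u_i\in R^\times$, for each $i\in[1,n]$.

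The central idea is to apply the Laurent phenomenon (Theorem \ref{thmlaurent}) in the auxiliary LP seed $\mu_i(t_1)$, whose cluster is $\{x_1,\ldots,x'_{i;t_1},\ldots,x_n\}$ with $x'_{i;t_1}=\hat F_{i;t_1}/x_i$. Mutating $t_2$ at $i$ produces the cluster variable $x'_{i;t_2}=\hat F_{i;t_2}/x_i$ of $\mathcal{A}(\mathcal{S})$, which by Theorem \ref{thmlaurent} lies in $R[x_1^{\pm1},\ldots,(x'_{i;t_1})^{\pm1},\ldots,x_n^{\pm1}]$. Substituting $x_i=\hat F_{i;t_1}/x'_{i;t_1}$ gives
\[
x'_{i;t_2}=\frac{\hat F_{i;t_2}}{\hat F_{i;t_1}}\cdot x'_{i;t_1}=:q\cdot x'_{i;t_1},
\]
with $q$ a priori a rational function in $x_1,\ldots,\hat x_i,\ldots,x_n$.

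Since $x'_{i;t_1}$ is algebraically independent of the other variables of $\mathbf{x}_{\mu_i(t_1)}$, reading off the coefficient of $x'_{i;t_1}$ forces $q\in R[x_1^{\pm1},\ldots,\hat x_i,\ldots,x_n^{\pm1}]$; by symmetry (swapping $t_1$ and $t_2$), the same is true of $1/q$, so $q$ is a unit in that Laurent ring, hence of the form $q=c\prod_{j\neq i}x_j^{e_j}$ with $c\in R^\times$ and $e_j\in\mathbb{Z}$. Translating through $F_{i;t_\alpha}=M_\alpha\hat F_{i;t_\alpha}$, where $M_\alpha$ is a monomial in $\{x_j:j\neq i\}$ coming from the definition of exchange Laurent polynomials, this yields $F_{i;t_2}=cL\cdot F_{i;t_1}$ for some Laurent monomial $L$ in $\{x_j:j\neq i\}$.

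To finish I will use the two conditions built into the LP seed definition: $F_{i;t_\alpha}$ is a polynomial and is not divisible by any $x_j$. A negative exponent of $x_j$ in $L$ would force $x_j\mid F_{i;t_1}$, while a positive exponent would force $x_j\mid F_{i;t_2}$; both contradict the non-divisibility axiom, so $L=1$ and $F_{i;t_2}=cF_{i;t_1}$. The main obstacle I anticipate is the judicious choice of auxiliary seed in the key step: mutating $t_1$ at the single direction $i$ is what makes the substituted expression linear in $x'_{i;t_1}$, and the transcendence independence of $x'_{i;t_1}$ is what sharpens the Laurent phenomenon into divisibility of $q$ in $R[x_1^{\pm1},\ldots,\hat x_i,\ldots,x_n^{\pm1}]$ rather than just membership in $R[\mathbf{x}_{\mu_i(t_1)}^{\pm1}]$. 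Once that step is secured, the monomial extraction $L=1$ is forced cleanly by the irreducibility/non-divisibility axioms.
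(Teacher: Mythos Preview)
Your proposal is correct and follows essentially the same route as the paper: both mutate the two seeds at the index in question, express the new cluster variable as $x'_{i;t_2}=(\hat F_{i;t_2}/\hat F_{i;t_1})\,x'_{i;t_1}$, invoke the Laurent phenomenon in the cluster $\mathbf{x}_{\mu_i(t_1)}$ (and symmetrically in $\mathbf{x}_{\mu_i(t_2)}$) to force the ratio to be a unit of the Laurent ring, and then use the axiom $x_j\nmid F_i$ to strip the monomial factor down to a unit of $R$. Your presentation is slightly more streamlined in that you reduce to $\sigma=\mathrm{id}$ at the outset and phrase the key step as ``reading off the coefficient of $x'_{i;t_1}$'', but the substance of the argument is the same as the paper's.
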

\begin{proof}
Without loss generality, we assume that $r_1=\cdots=r_n=1$. It does not make difference to the proof.

For any fixed $k\in[1,n]$, let $(\mathbf{x}_{u},{\bf F}_{u})=\mu_k(\mathbf{x}_{t_2},\mathbf{F}_{t_2})$ and $(\mathbf{x}_{v},{\bf F}_{v})=\mu_{\sigma(k)}(\mathbf{x}_{t_1},\mathbf{F}_{t_1})$, we consider the Laurent expansion of $x_{k;u}$ with respect to $\mathbf{x}_v$ and the  Laurent expansion of $x_{\sigma(k);v}$ with respect to $\mathbf{x}_u$.

From the definition of the LP mutation, we know
\begin{eqnarray}
x_{i;u}=\begin{cases}x_{i;t_2}&\text{if }i\neq k\\ \frac{\hat{F}_{k;t_2}}{x_{k;t_2}} &\text{if } i=k\end{cases}\text{  and \ }
x_{i;v}=\begin{cases}x_{i;t_1}&\text{if }i\neq \sigma(k)\\ \frac{\hat{F}_{\sigma(k);t_1}}{x_{\sigma(k);t_1}} &\text{if } i=\sigma(k)\end{cases}.
\label{eqmu}
\end{eqnarray}
By $x_{i;t_2}=x_{\sigma(i);t_1}$ for $i\in [1,n]$, we have $x_{i;u}=x_{\sigma(i);v}$ for $i\neq k$. By (\ref{eqmu}), we get
\begin{eqnarray}
x_{k;u}&=&\hat{F}_{k;t_2}(x_{1;t_2},\cdots,\hat x_{k;t_2},\cdots,x_{n;t_2})/x_{k;t_2}\nonumber\\
&=&\hat{F}_{k;t_2}(x_{\sigma(1);t_1},\cdots,\hat x_{\sigma(k);t_1},\cdots,x_{\sigma(n);t_1})/x_{\sigma(k);t_1}\nonumber\\
&=&\hat{F}_{k;t_2}(x_{\sigma(1);v},\cdots,\hat x_{\sigma(k);v},\cdots,x_{\sigma(n);v})/x_{\sigma(k);t_1};\nonumber\\
x_{\sigma(k);v}&=&\hat{F}_{\sigma(k);t_1}(x_{1;t_1},\cdots,\hat x_{\sigma(k);t_1},\cdots,x_{n;t_1})/x_{\sigma(k);t_1}\nonumber\\
&=&\hat{F}_{\sigma(k);t_1}(x_{1;v},\cdots,\hat x_{\sigma(k);v},\cdots,x_{n;v})/x_{\sigma(k);t_1}.\nonumber
\end{eqnarray}
Thus $\frac{x_{k;u}}{x_{\sigma(k);v}}=\frac{\hat{F}_{k;t_2}(x_{\sigma(1);v},\cdots,\hat x_{\sigma(k);v},\cdots,x_{\sigma(n);v})}{\hat{F}_{\sigma(k);t_1}(x_{1;v},\cdots,\hat x_{\sigma(k);v},\cdots,x_{n;v})}$  and we get that
\begin{eqnarray}\label{eqlaurent}
x_{k;u}= x_{\sigma(k);v}\frac{\hat{F}_{k;t_2}(x_{\sigma(1);v},\cdots,\hat x_{\sigma(k);v},\cdots,x_{\sigma(n);v})}{\hat{F}_{\sigma(k);t_1}(x_{1;v},\cdots,\hat x_{\sigma(k);v},\cdots,x_{n;v})}.
\end{eqnarray}

From the definition of the exchange Laurent  polynomial, we know the above equation has the form of
\begin{eqnarray}\label{eqexpansion1}
x_{k;u}=x_{\sigma(k);v}\frac{F_{k;t_2}(x_{\sigma(1);v},\cdots,\hat x_{\sigma(k);v},\cdots,x_{\sigma(n);v})}{F_{\sigma(k);t_1}(x_{1;v},\cdots,\hat x_{\sigma(k);v},\cdots,x_{n;v})}M,
\end{eqnarray}
where the Laurent monomial $M$ is of the form $x_{1;v}^{m_1}\cdots x_{\sigma(k)-1;v}^{m_{\sigma(k)-1}}x_{\sigma(k)+1;v}^{m_{\sigma(k)+1}}\cdots x_{n;v}^{m_n}$ and $m_j$ is integer for $j\in [1,n]-\sigma(k)$. %in $R[x_{1;v}^{\pm1},\cdots,\hat x_{\sigma(k);v}^{\pm1},\cdots,x_{n;v}^{\pm1}]$.
Thus equation (\ref{eqexpansion1}) is the Laurent expansion of $x_{k;u}$ with respect to $\mathbf{x}_v$.

Similarly, the following equation is the Laurent expansion of $x_{\sigma(k);v}$ with respect to $\mathbf{x}_u$.
\begin{eqnarray}\label{eqexpansion2}
x_{\sigma(k);v}=x_{k;u}\frac{F_{\sigma(k);t_1}(x_{\sigma^{-1}(1);u},\cdots,\hat x_{k;u},\cdots,x_{\sigma^{-1}(n);u})}{F_{k;t_2}(x_{1;u},\cdots,\hat x_{k;u},\cdots,x_{n;u})}M^{-1},
\end{eqnarray}
where $M^{-1}$ is also a Laurent monomial in $R[x_{1;u}^{\pm1},\cdots,\hat x_{k;u},\cdots,x_{n;u}^{\pm1}]$ since $x_{i;u}=x_{\sigma(i);v}$ for $i\neq k$.
\begin{spacing}{1.7}
We know that both
$\frac{F_{\sigma(k);t_1}(x_{\sigma^{-1}(1);u},\cdots,\hat x_{k;u},\cdots,x_{\sigma^{-1}(n);u})}{F_{k;t_2}(x_{1;u},\cdots,\hat x_{k;u},\cdots,x_{n;u})}=\frac{ F_{\sigma(k);t_1}(x_{1;v},\cdots,\hat x_{\sigma(k);v},\cdots,x_{n;v})}{ F_{k;t_2}(x_{\sigma(1);v},\cdots,\hat x_{\sigma(k);v},\cdots,x_{\sigma(n);v})}$
and
$\frac{ F_{k;t_2}(x_{\sigma(1);v},\cdots,\hat x_{\sigma(k);v},\cdots,x_{\sigma(n);v})}{ F_{\sigma(k);t_1}(x_{1;v},\cdots,\hat x_{\sigma(k);v},\cdots,x_{n;v})}$
are Laurent polynomials in $$R[x_{1;v}^{\pm 1},\cdots,\hat x_{\sigma(k);v},\cdots,x_{n;v}^{\pm 1}]=R[x_{1;u}^{\pm 1},\cdots,\hat x_{k;u},\cdots,x_{n;u}^{\pm 1}]$$ by Laurent phenomenon.

Thus both $\frac{ F_{k;t_2}(x_{\sigma(1);v},\cdots,\hat x_{\sigma(k);v},\cdots,x_{\sigma(n);v})}{ F_{\sigma(k);t_1}(x_{1;v},\cdots,\hat x_{\sigma(k);v},\cdots,x_{n;v})}$ and $\frac{ F_{\sigma(k);t_1}(x_{1;v},\cdots,\hat x_{\sigma(k);v},\cdots,x_{n;v})}{ F_{k;t_2}(x_{\sigma(1);v},\cdots,\hat x_{\sigma(k);v},\cdots,x_{\sigma(n);v})}$ are units in \\ $R[x_{1;v}^{\pm 1},\cdots,\hat x_{\sigma(k);v},\cdots,x_{n;v}^{\pm 1}].$

Because both $F_{k;t_2}$ and $F_{\sigma(k);t_1}$ are irreducible and
$x_{j;t_2} \nmid F_{k;t_2}$, $x_{j;t_1}\nmid F_{\sigma(k);t_1}$ for each $j\in[1,n]$, so that both $\frac{ F_{k;t_2}(x_{\sigma(1);v},\cdots,\hat x_{\sigma(k);v},\cdots,x_{\sigma(n);v})}{ F_{\sigma(k);t_1}(x_{1;v},\cdots,\hat x_{\sigma(k);v},\cdots,x_{n;v})}$ and $\frac{ F_{\sigma(k);t_1}(x_{1;v},\cdots,\hat x_{\sigma(k);v},\cdots,x_{n;v})}{ F_{k;t_2}(x_{\sigma(1);v},\cdots,\hat x_{\sigma(k);v},\cdots,x_{\sigma(n);v})}$ are units in $R$.
\end{spacing}
Hence
$$F_{k;t_2}(x_{\sigma(1);v},\cdots,\hat x_{\sigma(k);v},\cdots,x_{\sigma(n);v})=r_k'F_{\sigma(k);t_1}(x_{1;v},\cdots,\hat x_{\sigma(k);v},\cdots,x_{n;v}),$$ for some unit $r_k'$ in $R$,
i.e., $F_{k;t_2}(x_{1;u},\cdots,\hat x_{k;u},\cdots,x_{n;u})=r_k'F_{\sigma(k);t_1}(x_{1;v},\cdots,\hat x_{\sigma(k);v},\cdots,x_{n;v})$. Thus
$F_{k;t_2}=r_k'F_{\sigma(k);t_1}$ as polynomials, for $k\in[1,n]$.
\end{proof}

In fact, the proof of the above theorem also works for any (totally sign-skew-symmetric) cluster algebra with any coefficients and any generalized cluster algebra. We do not talk much about generalized cluster algebras here, and one can refer \cite{CL,CS,NT} for details.
Now we give the result for cluster algebras and main points of proof that are different from the previous one.
\begin{theorem} \label{mainthm3}
Let $\mathcal{A}(\mathcal{S})$ be a cluster algebra, and $\Sigma_{t_l}=(\mathbf{x}_{t_l},\mathbf{y}_{t_l},B_{t_l}),\; l=1,2$ be two cluster seeds of $\mathcal{A}(\mathcal{S})$.
If  there exists a permutation $\sigma$ of  $[1,n]$  such that $x_{i;t_2}=x_{\sigma(i);t_1}$ for $i\in[1,n]$,
then

(i) Either $y_{k;t_2}=y_{\sigma(k);t_1}$, $~b_{ik}^{t_2}=b_{\sigma(i)\sigma(k)}^{t_1}$ or $y_{k;t_2}=y_{\sigma(k);t_1}^{-1}$, $~b_{ik}^{t_2}=-b_{\sigma(i)\sigma(k)}^{t_1}$ for $i,k\in[1,n]$.

(ii) In both cases, $F_{i;t_2}=F_{\sigma(i);t_1}$ as polynomials for $i\in[1,n]$.
\end{theorem}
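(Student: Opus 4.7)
The plan is to mimic the proof of Theorem \ref{mainthm1} through the point where the Laurent phenomenon forces a proportionality of exchange polynomials, and then to exploit the extra rigidity provided by the binomial form of cluster-algebra exchange polynomials. Fix $k\in[1,n]$, set $\Sigma_u=\mu_k(\Sigma_{t_2})$ and $\Sigma_v=\mu_{\sigma(k)}(\Sigma_{t_1})$, and note that the hypothesis gives $x_{i;u}=x_{\sigma(i);v}$ for $i\neq k$. Since the cluster-algebra mutation rule reads $x_{k;u}=F_{k;t_2}/x_{k;t_2}$ directly (no $\hat F$ factor appears), the ratio $x_{k;u}/x_{\sigma(k);v}$ reduces to $F_{k;t_2}/F_{\sigma(k);t_1}$ after identifying $x_{i;t_2}$ with $x_{\sigma(i);t_1}$, and the Laurent phenomenon for cluster algebras forces this ratio and its inverse to both lie in $\mathbb{ZP}[x_{1;v}^{\pm1},\cdots,\hat x_{\sigma(k);v},\cdots,x_{n;v}^{\pm1}]$.

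Consequently $F_{k;t_2}=cM\cdot F_{\sigma(k);t_1}$ for some unit $c\in\mathbb{ZP}$ and some Laurent monomial $M$. Since neither binomial is divisible by any cluster variable, and both are genuine polynomials with non-negative exponents, the monomial $M$ must be trivial, so $F_{k;t_2}=c\,F_{\sigma(k);t_1}$ as polynomials in the shared cluster variables.

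Here the argument diverges from the LP setting: each $F_{j;t_l}$ is a binomial whose two monomials have disjoint variable support, because no index $i$ can satisfy both $b_{ij}>0$ and $b_{ij}<0$. The equation $F_{k;t_2}=cF_{\sigma(k);t_1}$ therefore admits exactly two term-pairings. If the two positive-exponent monomials correspond, then comparing exponents of each $x_{\sigma(i);t_1}$ yields $b_{ik}^{t_2}=b_{\sigma(i)\sigma(k)}^{t_1}$ and comparing coefficients yields $y_{k;t_2}=y_{\sigma(k);t_1}$. If instead the $+$-monomial of $F_{k;t_2}$ corresponds to the $-$-monomial of $F_{\sigma(k);t_1}$, one obtains $b_{ik}^{t_2}=-b_{\sigma(i)\sigma(k)}^{t_1}$, and the semifield identities $\tfrac{y^{-1}}{1\oplus y^{-1}}=\tfrac{1}{1\oplus y}$ and $\tfrac{1}{1\oplus y^{-1}}=\tfrac{y}{1\oplus y}$ yield $y_{k;t_2}=y_{\sigma(k);t_1}^{-1}$. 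This establishes (i). In either case, substituting back into the defining binomial shows that the unit $c$ is forced to equal $1$ and $F_{k;t_2}=F_{\sigma(k);t_1}$ as polynomials, which is (ii).

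The main obstacle is the bookkeeping in the sign-flipped case: one must check that inverting $y_{\sigma(k);t_1}$ while negating every entry of the $\sigma(k)$-th column of $B_{t_1}$ reproduces exactly the same binomial, and this is precisely the role of the two semifield identities above. Apart from this step, the proof is essentially a transcription of that of Theorem \ref{mainthm1}, with "unit in $R$" replaced by "element of $\mathbb{ZP}$" at the appropriate places; the rigidity that upgrades "equal up to a unit" to "equal on the nose" comes entirely from the binomial form of the exchange polynomials.
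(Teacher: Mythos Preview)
Your proposal is correct and follows essentially the same approach as the paper's own proof: both set up $\Sigma_u=\mu_k(\Sigma_{t_2})$ and $\Sigma_v=\mu_{\sigma(k)}(\Sigma_{t_1})$, use the Laurent phenomenon on $x_{k;u}$ and $x_{\sigma(k);v}$ to force the ratio $F_{k;t_2}/F_{\sigma(k);t_1}$ to be a Laurent monomial in $\mathbb{ZP}[x_{1;v}^{\pm1},\dots,\hat x_{\sigma(k);v},\dots,x_{n;v}^{\pm1}]$, and then compare the two explicit binomials term-by-term to extract the dichotomy for $(y_{k},b_{\cdot k})$. The only cosmetic difference is that you insert an intermediate step arguing that the Laurent monomial $M$ is trivial (using non-divisibility of each binomial by any cluster variable) before matching terms, whereas the paper matches the binomials directly against an arbitrary Laurent monomial and reads off $b_{ik}^{t_2}=\pm b_{\sigma(i)\sigma(k)}^{t_1}$ and $y_{k;t_2}=y_{\sigma(k);t_1}^{\pm1}$ in one stroke; either ordering works and yields the same conclusion.
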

\begin{proof}
By the same method with the proof of Theorem \ref{mainthm1}, the version of the equation (\ref{eqlaurent}) for the cluster algebra is just
\begin{eqnarray}\label{eqpan1}
x_{k;u}=x_{\sigma(k);v}\frac{F_{k;t_2}(x_{\sigma(1);v},\cdots,\hat x_{\sigma(k);v},\cdots,x_{\sigma(n);v})}{F_{\sigma(k);t_1}(x_{1;v},\cdots,\hat x_{\sigma(k);v},\cdots,x_{n;v})},
\end{eqnarray}
and note that $x_{i;u}=x_{\sigma(i);v}$ for any $i\neq k$, we also have
\begin{eqnarray}\label{eqpan2}
x_{\sigma(k);v}=x_{k;u}\frac{F_{\sigma(k);t_1}(x_{\sigma^{-1}(1);u},\cdots,\hat x_{k;u},\cdots,x_{\sigma^{-1}(n);u})}{F_{k;t_2}(x_{1;u},\cdots,\hat x_{k;u},\cdots,x_{n;u})}.
\end{eqnarray}

We know that equation (\ref{eqpan1}) is the Laurent expansion of $x_{k;u}$ with respect to $\mathbf{x}_{v}$ and
equation (\ref{eqpan2}) is the Laurent expansion of $x_{\sigma(k);v}$ with respect to $\mathbf{x}_{u}$. Then by Laurent phenomenon, both $\frac{F_{k;t_2}(x_{\sigma(1);v},\cdots,\hat x_{\sigma(k);v},\cdots,x_{\sigma(n);v})}{F_{\sigma(k);t_1}(x_{1;v},\cdots,\hat x_{\sigma(k);v},\cdots,x_{n;v})}$ and $\frac{F_{\sigma(k);t_1}(x_{\sigma^{-1}(1);u},\cdots,\hat x_{k;u},\cdots,x_{\sigma^{-1}(n);u})}{F_{k;t_2}(x_{1;u},\cdots,\hat x_{k;u},\cdots,x_{n;u})}$ are Laurent polynomials, and this implies that $\frac{F_{k;t_2}(x_{\sigma(1);v},\cdots,\hat x_{\sigma(k);v},\cdots,x_{\sigma(n);v})}{F_{\sigma(k);t_1}(x_{1;v},\cdots,\hat x_{\sigma(k);v},\cdots,x_{n;v})}$ is a Laurent monomial in $\mathbb{ZP}[x_{1;v}^{\pm1},\cdots,\hat x_{\sigma(k);v},\cdots,x_{n;v}^{\pm1}]$. We know that
$$F_{k;t_2}(x_{\sigma(1);v},\cdots,\hat x_{\sigma(k);v},\cdots,x_{\sigma(n);v})=\frac{y_{k;t_2}}{1\oplus y_{k;t_2}}\prod\limits_{b_{ik}^{t_2}>0}x_{\sigma(i);v}^{b_{ik}^{t_2}}
+\frac{1}{1\oplus y_{k;t_2}}\prod\limits_{b_{ik}^{t_2}<0}x_{\sigma(i);v}^{-b_{ik}^{t_2}},$$
$$F_{\sigma(k);t_1}(x_{1;v},\cdots,\hat x_{\sigma(k);v},\cdots,x_{n;v})=\frac{y_{\sigma(k);t_1}}{1\oplus y_{\sigma(k);t_1}}\prod\limits_{b_{i\sigma(k)}^{t_1}>0}x_{i;v}^{b_{i\sigma(k)}^{t_1}}
+\frac{1}{1\oplus y_{\sigma(k);t_1}}\prod\limits_{b_{i\sigma(k)}^{t_1}<0}x_{i;v}^{-b_{i\sigma(k)}^{t_1}}.$$
Because $\frac{F_{k;t_2}(x_{\sigma(1);v},\cdots,\hat x_{\sigma(k);v},\cdots,x_{\sigma(n);v})}{F_{\sigma(k);t_1}(x_{1;v},\cdots,\hat x_{\sigma(k);v},\cdots,x_{n;v})}$   is a Laurent monomial, we must have either $y_{k;t_2}=y_{\sigma(k);t_1}$, $~b_{ik}^{t_2}=b_{\sigma(i)\sigma(k)}^{t_1}$ or $y_{k;t_2}=y_{\sigma(k);t_1}^{-1}$,  $~b_{ik}^{t_2}=-b_{\sigma(i)\sigma(k)}^{t_1}$.
In both cases, we have $$F_{k;t_2}(x_{\sigma(1);v},\cdots,\hat x_{\sigma(k);v},\cdots,x_{\sigma(n);v})=F_{\sigma(k);t_1}(x_{1;v},\cdots,\hat x_{\sigma(k);v},\cdots,x_{n;v}),$$
i.e., $F_{k;t_2}(x_{1;u},\cdots,\hat x_{k;u},\cdots,x_{n;u})=F_{\sigma(k);t_1}(x_{1;v},\cdots,\hat x_{\sigma(k);v},\cdots,x_{n;v})$. Thus
$F_{k;t_2}=F_{\sigma(k);t_1}$ as polynomials.
\end{proof}
\begin{lemma}\label{lem3}
Let $\mathcal{A}(\mathcal{S})$ be a skew-symmetrizable cluster algebra with skew-symmetrizer $D$, and
$(\mathbf{x}_{t_1},\mathbf{y}_{t_1},B_{t_1}), (\mathbf{x}_{t_2},\mathbf{y}_{t_2},B_{t_2})$ be two cluster seeds of $\mathcal{A}(\mathcal{S})$.
If  there exists a permutation $\sigma$ of $[1,n]$  such that $x_{i;t_2}=x_{\sigma(i);t_1}$ for $i\in[1,n]$,
then $b_{ik}^{t_2}=\frac{d_k}{d_{\sigma (k)}}b_{\sigma(i)\sigma(k)}^{t_1}$.
\end{lemma}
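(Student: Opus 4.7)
The plan is to bootstrap from Theorem~\ref{mainthm3} by adding in the skew-symmetrizability constraint with the common skew-symmetrizer $D$. By Theorem~\ref{mainthm3}, for each $k\in[1,n]$ there exists a sign $\epsilon_k\in\{\pm 1\}$ with $b_{ik}^{t_2}=\epsilon_k\,b_{\sigma(i)\sigma(k)}^{t_1}$ for every $i$, and correspondingly $y_{k;t_2}=y_{\sigma(k);t_1}^{\epsilon_k}$. When the $\sigma(k)$-th column of $B_{t_1}$ vanishes identically, both sides of the claimed identity are zero, so I focus on columns with at least one nonzero entry.

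The key computation combines the two skew-symmetrizer identities $d_ib_{ik}^{t_2}+d_kb_{ki}^{t_2}=0$ and $d_{\sigma(i)}b_{\sigma(i)\sigma(k)}^{t_1}+d_{\sigma(k)}b_{\sigma(k)\sigma(i)}^{t_1}=0$. Substituting $b_{ik}^{t_2}=\epsilon_k\,b_{\sigma(i)\sigma(k)}^{t_1}$ and $b_{ki}^{t_2}=\epsilon_i\,b_{\sigma(k)\sigma(i)}^{t_1}$ into the first, and eliminating $b_{\sigma(k)\sigma(i)}^{t_1}$ via the second, produces
\[
\bigl(\epsilon_k\,d_i\,d_{\sigma(k)}-\epsilon_i\,d_k\,d_{\sigma(i)}\bigr)\,b_{\sigma(i)\sigma(k)}^{t_1}=0.
\]
For any $i$ with $b_{\sigma(i)\sigma(k)}^{t_1}\neq 0$, positivity of the $d_\ell$'s forces $\epsilon_i=\epsilon_k$ and $d_k/d_{\sigma(k)}=d_i/d_{\sigma(i)}$; in particular, along each non-singleton connected component of $\Gamma(B_{t_1})$ the sign $\epsilon_k$ is constant and the ratio $d_k/d_{\sigma(k)}$ is constant. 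Combined with the identity from Theorem~\ref{mainthm3}, this rewrites as $b_{ik}^{t_2}=(d_k/d_{\sigma(k)})\,b_{\sigma(i)\sigma(k)}^{t_1}$ as soon as one knows $\epsilon_k=d_k/d_{\sigma(k)}$.

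The hard part is precisely this last identification: since $\epsilon_k\in\{\pm 1\}$ while $d_k/d_{\sigma(k)}$ is an a priori independent positive rational, matching them forces $\epsilon_k=+1$ and $d_k=d_{\sigma(k)}$ on every component carrying an edge. I plan to close this gap by using the invariance of the skew-symmetrizer under mutation: the multiset of weights $\{d_\ell:\ell\in C\}$ on a connected component $C$ of $\Gamma(B_{t_1})$ is a mutation invariant, so $\sigma$ must restrict to a bijection on each non-singleton component. The resulting equality $\prod_{k\in C}d_k=\prod_{k\in C}d_{\sigma(k)}$ then pins the common value of $d_k/d_{\sigma(k)}$ on $C$ to $1$, whence $\epsilon_k=1$, completing the proof.
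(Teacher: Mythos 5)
Your reduction is sound up to a point: combining the two skew-symmetrizer identities with the dichotomy of Theorem~\ref{mainthm3} does give $\bigl(\epsilon_k d_i d_{\sigma(k)}-\epsilon_i d_k d_{\sigma(i)}\bigr)b^{t_1}_{\sigma(i)\sigma(k)}=0$, hence that the sign $\epsilon$ and the ratio $d_k/d_{\sigma(k)}$ are constant along each connected component, and you correctly isolate the crux: the lemma amounts to $\epsilon_k=+1$ and $d_k=d_{\sigma(k)}$ wherever the column is nonzero. But the proposed closing of the gap does not work. First, a minor point: the fact that $\sigma$ preserves each connected component is true, but not because the weight multiset is a mutation invariant (that observation is vacuous, since $D$ itself never changes under mutation, and in any case it would still allow $\sigma$ to swap two components with equal weight multisets); the correct reason is that the partition into components is the same at every seed and a cluster variable sitting at a position in a component $C$ is a non-constant Laurent polynomial in the initial variables indexed by $C$ alone, so $x_{i;t_2}=x_{\sigma(i);t_1}$ forces $\sigma(i)$ and $i$ to lie in the same component. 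Second, and fatally: even granting this, your product identity $\prod_{k\in C}d_k=\prod_{k\in C}d_{\sigma(k)}$ only pins the common ratio $d_k/d_{\sigma(k)}$ to $1$; it says nothing about $\epsilon_k$, and the concluding ``whence $\epsilon_k=1$'' is a non sequitur. Every constraint you have used --- the relations $d_ib^{t_2}_{ik}+d_kb^{t_2}_{ki}=0$ and $d_{\sigma(i)}b^{t_1}_{\sigma(i)\sigma(k)}+d_{\sigma(k)}b^{t_1}_{\sigma(k)\sigma(i)}=0$ together with Theorem~\ref{mainthm3} --- is invariant under replacing $B_{t_2}$ by $-B_{t_2}$ on an entire component (with the $y$'s inverted accordingly), so no amount of this bookkeeping can exclude $\epsilon\equiv-1$. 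Determining that sign is precisely the nontrivial content of the lemma.

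For contrast, the paper does not argue elementarily at all: it invokes the cluster formula of Cao--Li (Theorem 3.5 of \cite{CL}), which states that when two seeds share a cluster up to the permutation $\sigma$ one has $P_\sigma(B_{t_1}D^{-1})P_\sigma^{\top}=B_{t_2}D^{-1}$, and the lemma is read off entrywise, sign included. If you want to avoid citing that formula you need some comparably substantive input that breaks the $B\mapsto-B$ symmetry (e.g.\ the $Y$-pattern/tropical-sign arguments of Gekhtman, Shapiro and Vainshtein); the skew-symmetrizer computation alone cannot supply it.
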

\begin{proof}
Let $P_\sigma$ be the permutation matrix define by the permutation $\sigma$. By the cluster formula (see Theorem 3.5 of \cite{CL}), we have
$P_\sigma (B_{t_1}D^{-1})P_\sigma^{\top}=B_{t_2}D^{-1}$. Then $B_{t_2}=(P_\sigma B_{t_1}P_\sigma^{\top})( P_\sigma D^{-1}P_\sigma^{\top})D$.
The result follows.

By the proof of the first statement and the definition of equivalence for two cluster seeds, we conclude the second statement.
\end{proof}

From Theorem \ref{mainthm3} and Lemma \ref{lem3}, we can affirm a conjecture for skew-symmetrizable cluster algebra proposed by Fomin and Zelevinsky in \cite{FZ2}, which says every seed of a cluster algebra is uniquely determined by its cluster.
\begin{corollary}
Let $\mathcal{A}(\mathcal{S})$ be a skew-symmetrizable cluster algebra with skew-symmetrizer $D$, and $(\mathbf{x}_{t_1},\mathbf{y}_{t_1},B_{t_1}), (\mathbf{x}_{t_2},\mathbf{y}_{t_2},B_{t_2})$ be two cluster seeds of $\mathcal{A}(\mathcal{S})$.
If  there exists a permutation $\sigma$ of  $[1,n]$  such that $x_{i;t_2}=x_{\sigma(i);t_1}$ for $i\in[1,n]$,
then  $y_{k;t_2}=y_{\sigma(k);t_1}$, $~b_{ik}^{t_2}=b_{\sigma(i)\sigma(k)}^{t_1}$, $d_k=d_{\sigma(k)}$ for any $i$ and $k$.
\end{corollary}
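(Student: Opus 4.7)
The plan is to combine Theorem \ref{mainthm3} with Lemma \ref{lem3}. Theorem \ref{mainthm3}(i) provides, for each fixed $k$, a dichotomy: either \textbf{(A)} $y_{k;t_2}=y_{\sigma(k);t_1}$ and $b_{ik}^{t_2}=b_{\sigma(i)\sigma(k)}^{t_1}$ for all $i$, or \textbf{(B)} $y_{k;t_2}=y_{\sigma(k);t_1}^{-1}$ and $b_{ik}^{t_2}=-b_{\sigma(i)\sigma(k)}^{t_1}$. Lemma \ref{lem3} simultaneously gives the scaling identity $b_{ik}^{t_2}=\frac{d_k}{d_{\sigma(k)}}b_{\sigma(i)\sigma(k)}^{t_1}$, in which $d_k$ and $d_{\sigma(k)}$ are strictly positive diagonal entries of the skew-symmetrizer $D$.

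The core argument is a one-line sign comparison. Fix $k$ and, assuming the $\sigma(k)$-th column of $B_{t_1}$ is not identically zero, choose any $i$ with $b_{\sigma(i)\sigma(k)}^{t_1}\neq 0$. Equating the two expressions for $b_{ik}^{t_2}$ yields $\frac{d_k}{d_{\sigma(k)}}\in\{+1,-1\}$, and positivity of $D$ rules out $-1$. Hence $d_k=d_{\sigma(k)}$ and case (B) is eliminated; case (A) then delivers $y_{k;t_2}=y_{\sigma(k);t_1}$ together with $b_{ik}^{t_2}=b_{\sigma(i)\sigma(k)}^{t_1}$ for every $i$ at once.

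The only remaining subtlety is the degenerate situation in which the $\sigma(k)$-th column of $B_{t_1}$ is identically zero. Then by Lemma \ref{lem3} so is the $k$-th column of $B_{t_2}$, and sign-skew-symmetry forces the corresponding rows to vanish as well, so the $b$-equality is automatic and the constraint on $d_k$ is vacuous (any positive value is compatible with skew-symmetrization, so one may set $d_k=d_{\sigma(k)}$ by convention). To extract $y_{k;t_2}=y_{\sigma(k);t_1}$ here, I would pass to an adjacent seed obtained by mutating the other directions and reapply the non-degenerate argument, exploiting that the coefficient mutation formula $y_i'=y_iy_k^{\max(b_{ki},0)}(1\oplus y_k)^{-b_{ki}}$ propagates $y_{k}$ into other indices in an asymmetric way that distinguishes $y_k$ from $y_k^{-1}$.

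The main obstacle is precisely this isolated-vertex case, where Theorem \ref{mainthm3} retains a genuine $y\leftrightarrow y^{-1}$ ambiguity because the exchange polynomial $F_{k;t_2}$ collapses to the constant $1$ and carries no algebraic information about $y_{k;t_2}$; breaking the ambiguity requires invoking the cluster pattern globally rather than the single seed $t_2$. Outside that degenerate case, the corollary is immediate from the positivity of the skew-symmetrizer.
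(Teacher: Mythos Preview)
Your non-degenerate argument is exactly the intended one, and it matches what the paper does: combine the dichotomy of Theorem~\ref{mainthm3}(i) with the scaling identity of Lemma~\ref{lem3}, then use positivity of $D$ to rule out the minus sign. The paper offers no further detail than citing those two results, so on the main case you and the paper agree completely.

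Your proposed handling of the isolated-vertex case, however, does not work as written. If the $\sigma(k)$-th column of $B_{t_1}$ vanishes then so does the $\sigma(k)$-th row (sign-skew-symmetry), and a direct check of the matrix mutation rule shows this isolation is permanent: mutating at any $j\neq\sigma(k)$ gives
\[
b'_{i,\sigma(k)}=b_{i,\sigma(k)}+\operatorname{sgn}(b_{ij})\max(b_{ij}b_{j,\sigma(k)},0)=0
\]
since $b_{j,\sigma(k)}=0$. So no adjacent seed is ``less degenerate.'' Likewise the coefficient mutation you invoke collapses: when $b_{ki}=0$ the formula $y_i'=y_i\,y_k^{\max(b_{ki},0)}(1\oplus y_k)^{-b_{ki}}$ reduces to $y_i'=y_i$, meaning $y_k$ is \emph{never} propagated into any other coefficient, and mutation at $k$ itself just sends $y_k\mapsto y_k^{-1}$, which is precisely the ambiguity you are trying to break. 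Thus the strategy of ``mutate elsewhere and reapply the non-degenerate argument'' cannot separate case (A) from case (B) at an isolated index. The paper does not treat this case explicitly either; the honest summary is that for isolated indices the $b$-equality is trivial, the $d$-equality is a normalization of the non-unique skew-symmetrizer (as you note), but the $y$-equality is not recoverable from Theorem~\ref{mainthm3} and Lemma~\ref{lem3} alone and needs either a separate argument or an implicit nondegeneracy hypothesis.
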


\subsection{On Theorem \ref{conj} (2)}
Let $\mathcal{A}(\mathcal{S})$ be a LP algebra, if there is a seed $(\mathbf{x}_{t_0},{\bf F}_{t_0})$ of $\mathcal{A}(\mathcal{S})$ such that the exchange polynomials in ${\bf F}_{t_0}$ are all nontrivial, we say that  $\mathcal{A}(\mathcal{S})$ is a LP algebra having no trivial exchange relations.

Note that if there is a trivial exchange polynomial in a LP seed $(\mathbf{x}_{t_0},{\bf F}_{t_0})$, from the definition of LP mutation, this trivial exchange polynomial remain invariant under any sequence of LP mutations. So if $\mathcal{A}(\mathcal{S})$ is a LP algebra having no trivial exchange relations, then each exchange polynomial of $\mathcal{A}(\mathcal{S})$ is a nontrivial polynomial.

\begin{lemma}\label{mainlem}
Let $\mathcal{A}(\mathcal{S})$ be a LP algebra having no trivial exchange relations, and $\Sigma_{t}=(\mathbf{x}_t,\mathbf{F}_t)$,
$\Sigma_{t_0}=(\mathbf{x}_{t_0},\mathbf{F}_{t_0})$ be two LP seeds of $\mathcal{A}(\mathcal{S})$ with $x_{i;t}=r_ix_{i;t_0}$, where $r_i$ is an unit in $R$ for any $i\neq k$. If $x_{k;t}=Mx_{k;t_0}$ for some Laurent monomial $M$ in $R[x_{1;t_0}^{\pm1},\cdots,\hat x_{k;t_0},\cdots,x_{n;t_0}^{\pm1}]$, then $M$ is an unit in $R$, and $[\Sigma_{t}]=[\Sigma_{t_0}]$.
\end{lemma}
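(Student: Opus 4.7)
The plan is to mimic the strategy of Theorem~\ref{mainthm1}: mutate both seeds in direction $k$ and compare the new cluster variables via the Laurent phenomenon applied in both directions. Set $\Sigma_{t'} = \mu_k(\Sigma_t)$ and $\Sigma_{t_0'} = \mu_k(\Sigma_{t_0})$. After the identification $x_{i;t}=r_ix_{i;t_0}$ for $i\neq k$, the exchange Laurent polynomials $\hat F_{k;t}$, $\hat F_{k;t_0}$ and the given Laurent monomial $M$ all live in the common ring $A := R[x_{1;t_0}^{\pm1},\dots,\hat x_{k;t_0},\dots,x_{n;t_0}^{\pm1}]$, and the mutation formula combined with $x_{k;t}=Mx_{k;t_0}$ yields
\[
\frac{x_{k;t'}}{x_{k;t_0'}} \;=\; \frac{\hat F_{k;t}}{M\,\hat F_{k;t_0}}.
\]
Applying Theorem~\ref{thmlaurent} first to $x_{k;t'}$ in the cluster $\mathbf{x}_{t_0'}$ and then to $x_{k;t_0'}$ in $\mathbf{x}_{t'}$ shows that both sides of this identity and its reciprocal are Laurent polynomials in $A$; hence the ratio is a unit of $A$, i.e.\ a Laurent monomial $u$ with coefficient in $R^\times$.

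Next, I would rewrite the identity $\hat F_{k;t}=M u\,\hat F_{k;t_0}$ using $\hat F = F/\prod x^{a}$ and clear the monomial denominators to obtain an equation of polynomials $F_{k;t}(r\!\cdot\!x_{t_0})=L\,F_{k;t_0}(x_{t_0})$ with $L$ a Laurent monomial in $A$. Since $F_{k;t}$ is irreducible and, by the no-trivial-exchange-relations assumption, nontrivial, and since neither $F_{k;t}$ nor $F_{k;t_0}$ is divisible by any $x_{j;t_0}$, the same reasoning used at the end of the proof of Theorem~\ref{mainthm1} (comparing extremal powers of each $x_{j;t_0}$ on both sides) forces $L\in R^\times$. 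Consequently $F_{k;t}=\lambda F_{k;t_0}$ after the unit substitution, for some $\lambda\in R^\times$.

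The crux --- and the step I expect to be the hardest --- is to deduce $M\in R^\times$ from the Laurent-monomial identity $\hat F_{k;t}/\hat F_{k;t_0}=Mu$: the Laurent phenomenon alone only pins down the product $Mu$, not $M$ and $u$ separately. I plan to close this gap by invoking the maximality property built into the definition of $\hat F$, namely that $\hat F_{k;t}|_{x_{j;t}\leftarrow F_{j;t}/x'_{j;t}}$ is not divisible by $F_{j;t}$ (cf.\ Proposition~\ref{2.7ofLPA}); combined with the just-established equality $F_{k;t}=\lambda F_{k;t_0}$, this rigidity is expected to force the denominator exponents $a_{j,t}$ and $a_{j,t_0}$ of $\hat F_{k;t}$ and $\hat F_{k;t_0}$ to coincide after the unit substitution. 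The identity then reduces to $Mu\in R^\times$, and the same analysis applied to $u$ (the analogue of $M$ at the mutated pair $\Sigma_{t'},\Sigma_{t_0'}$) will cancel the remaining monomial freedom and yield $M\in R^\times$. Once $M\in R^\times$ is established, Theorem~\ref{mainthm1} immediately gives $[\Sigma_t]=[\Sigma_{t_0}]$.
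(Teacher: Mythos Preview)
Your approach has a genuine gap at exactly the step you flagged as the crux, and I do not see how to close it along the lines you sketch.

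After mutating both seeds at $k$ you correctly obtain that $\hat F_{k;t}/\hat F_{k;t_0}=Mu$ with $u$ a unit of $A$, and then that $F_{k;t}=L\,F_{k;t_0}$ with $L=Mu\cdot N_t/N_{t_0}\in R^\times$, where $N_t=F_{k;t}/\hat F_{k;t}$ and $N_{t_0}=F_{k;t_0}/\hat F_{k;t_0}$. But the monomial $N_t$ is determined not by $F_{k;t}$ alone: its exponent at $x_j$ is the maximal power of $F_{j;t}$ dividing $F_{k;t}|_{x_j\leftarrow F_{j;t}/x_j'}$. So comparing $N_t$ with $N_{t_0}$ requires relating $F_{j;t}$ to $F_{j;t_0}$ for $j\neq k$, which you have no control over at this stage. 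The maximality property you cite (Proposition~\ref{2.7ofLPA}) therefore does not give $N_t=N_{t_0}$. And even granting $Mu\in R^\times$, your proposal to ``apply the same analysis to $u$'' is circular: $u$ plays the role of $M$ for the pair $(\Sigma_{t'},\Sigma_{t_0'})$, and mutating that pair at $k$ just returns $(\Sigma_t,\Sigma_{t_0})$, reproducing the relation $Mu\in R^\times$ rather than isolating $M$. A telling symptom is that your argument never really uses the no-trivial-exchange-relations hypothesis for the polynomials $F_{j}$ with $j\neq k$.

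The paper's proof avoids this entirely by mutating in a \emph{different} direction. Write $M=r\prod_{j\neq k}x_{j;t_0}^{a_j}$. If some $a_j<0$, mutate $\Sigma_{t_0}$ at $j$ (not at $k$) to obtain a seed $(\mathbf{x}_w,\mathbf{F}_w)$, and expand $x_{k;t}=M x_{k;t_0}$ in the cluster $\mathbf{x}_w$: substituting $x_{j;t_0}=\hat F_{j;t_0}/x_{j;w}$ produces the genuine (non-monomial, irreducible, not divisible by any variable) factor $F_{j;t_0}^{-a_j}$ in the denominator, contradicting Theorem~\ref{thmlaurent}. Hence every $a_j\ge 0$; by symmetry (using $x_{k;t_0}=M^{-1}x_{k;t}$) every $a_j\le 0$; so $M\in R^\times$, and Theorem~\ref{mainthm1} finishes. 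The key idea you are missing is to mutate at the index $j$ where $M$ has a bad exponent, so that the nontriviality of $F_{j;t_0}$ (which is exactly the hypothesis) does the work.
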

\begin{proof}
Without loss generality, we assume that $r_i=1$ for $i\neq k$. It does not make difference to the proof.

Assume that $M=r\prod\limits_{i\neq k}x_{i;t_0}^{a_i}=r\prod\limits_{i\neq k}x_{i;t}^{a_i}$, where $r$ is an unit in $R$.
If there exists some $j\neq k$ such that $a_j<0$, then we consider the LP seed $(\mathbf{x}_{w}, \mathbf{F}_{w})=\mu_j(\Sigma_{t_0})$. From the definition of LP mutation, we know that $x_{i;w}=x_{i;t_0}$ for $i\neq j$ and $x_{j;w}x_{j;t_0}=\hat{F}_{j;t_0}(x_{1;t_0},\cdots,\hat x_{j;t_0},\cdots,x_{n;t_0})$. Then we have $$x_{k;t}=(r\prod\limits_{i\neq k}x_{i;t_0}^{a_i})x_{k;t_0}=\frac{(r\prod\limits_{i\neq j,k}x_{i;w}^{a_i})x_{j;w}^{-a_j}}{\hat{F}_{j;t_0}^{-a_j}(x_{1;w},\cdots,\hat x_{j;w},\cdots,x_{n;w})}x_{k;w},$$
which can be written as the following equation, from the definition of the exchange Laurent polynomial.
\begin{eqnarray}\label{eqlem}
x_{k;t}=\frac{(r\prod\limits_{i\neq j,k}x_{i;w}^{a_i})x_{j;w}^{-a_j}L}{F_{j;t_0}^{-a_j}(x_{1;w},\cdots,\hat x_{j;w},\cdots,x_{n;w})}x_{k;w},
\end{eqnarray}
where $L$ is a Laurent monomial in $R[x_{1;w}^{\pm1},\cdots,\hat x_{j;w},\cdots,x_{n;w}^{\pm1}]$.
Thus equation (\ref{eqlem}) is the expansion of $x_{k;t}$ with respect to $\mathbf{x}_w$.
Because $\mathcal{A}(\mathcal{S})$  has no trivial exchange relations, $F_{j;t_0}$ is a nontrivial polynomial. And we know that $F_{j;t_0}$ is irreducible and $x_s\nmid F_{j;t_0}$ for each $s\in [1,n]$, thus equation (\ref{eqlem}) will contradict Laurent phenomenon.
So each $a_j$ is nonnegative.

Similarly, by considering that $x_{k;t_0}=M^{-1}x_{k;t}=(r\prod\limits_{i\neq k}x_{i;t}^{-a_i})x_{k;t}$, we can get each $-a_j$ is nonnegative.
Thus each $a_j$ is $0$, thus $M=r$ is an unit in $R$. Then by Theorem \ref{mainthm1}, we have $[\Sigma_{t}]=[\Sigma_{t_0}]$.
\end{proof}

\begin{theorem}\label{mainthm2}
Let $\mathcal{A}(\mathcal{S})$ be a LP algebra of rank $n$ having no trivial exchange relations, and $\Sigma_{t_1}=(\mathbf{x}_{t_1},\mathbf{F}_{t_1})$,$\Sigma_{t_2}=(\mathbf{x}_{t_2},\mathbf{F}_{t_2})$ be two LP seeds of $\mathcal{A}(\mathcal{S})$. If $x_{i;t_1}=r_ix_{i;t_2}$ holds for any $i\neq k$, where $r_i$ is an unit in $R$, then $[\Sigma_{t_1}]=[\Sigma_{t_2}]$ or $[\Sigma_{t_1}]=\mu_k[\Sigma_{t_2}]$, that is, any two LP seeds with $n>1$ common cluster variables are connected with each other by one step of mutation.
\end{theorem}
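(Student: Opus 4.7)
The plan is to reduce, as in the proof of Theorem \ref{mainthm1}, to the normalization $x_{i;t_1} = x_{i;t_2}$ for $i \neq k$, then introduce the mutated seeds $\Sigma_u := \mu_k(\Sigma_{t_2})$ and $\Sigma_v := \mu_k(\Sigma_{t_1})$, and set $y := x_{k;t_2}$, $z := x_{k;u}$, $F := \hat{F}_{k;t_2}$ (so $yz = F$), and $R_0 := R[x_{j;t_2}^{\pm 1} : j \neq k] = R[x_{j;u}^{\pm 1} : j \neq k]$. Since $\mathbf{x}_u$ also shares $n-1$ cluster variables (up to units) with $\mathbf{x}_{t_1}$, establishing $[\Sigma_{t_1}] = [\Sigma_u]$ will yield $[\Sigma_{t_1}] = \mu_k[\Sigma_{t_2}]$.

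First I would show that $x_{k;t_1} = c\, y^L$ is a single Laurent monomial in $y$, with $c \in R_0$ and $L \in \mathbb{Z}$. By the Laurent phenomenon (Theorem \ref{thmlaurent}), both $x_{k;t_1}$ and $x_{k;v}$ lie in $R_0[y^{\pm 1}]$, and their product is $\hat{F}_{k;t_1}$ evaluated at $\mathbf{x}_{t_1}$, which, under the normalization, lies in $R_0$. Since $R$ is a UFD, $R_0[y^{\pm 1}]$ is a domain, so the lowest and highest $y$-degrees of a product of nonzero elements add (with nonzero leading coefficients); forcing both to vanish collapses the $y$-support of each of $x_{k;t_1}$ and $x_{k;v}$ to a single point.

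Next I would pin down $L = \pm 1$ via a field-theoretic argument. Mutations preserve the field generated by a cluster over $\mathrm{Frac}(R)$, so the common field $K := \mathrm{Frac}(R)(\mathbf{x}_t)$ (independent of $t$) satisfies $K = E(y)$ with $E := \mathrm{Frac}(R)(x_{j;t_2} : j \neq k)$. Since $c \in E$, we have $K = E(cy^L) = E(y^L)$; as $[E(y):E(y^L)] = |L|$ for $L \neq 0$ and $E(y^0) = E \neq K$, this forces $|L| = 1$.

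The final step is to identify $c$ by re-expanding in $\mathbf{x}_u$ via $y = F/z$. For $L = 1$, the identity $x_{k;v} = \hat{F}_{k;t_1} z/(cF) \in R_0[z^{\pm 1}]$ forces $cF \mid \hat{F}_{k;t_1}$ in the UFD $R_0$; the no-trivial-exchange-relations hypothesis ensures both $F$ and $\hat{F}_{k;t_1}$ are non-unit irreducibles of $R_0$, so this divisibility forces $c$ to be a unit in $R_0$, and Lemma \ref{mainlem} then gives $[\Sigma_{t_1}] = [\Sigma_{t_2}]$. For $L = -1$, the analogous expansions give $F \mid c$ and $c \mid \hat{F}_{k;t_1}$, so by the same irreducibility $c = u_1 F$ for a unit $u_1 \in R_0$; substituting yields $x_{k;t_1} = u_1 x_{k;u}$, and Lemma \ref{mainlem} applied to the pair $\Sigma_{t_1}, \Sigma_u$ gives $[\Sigma_{t_1}] = [\Sigma_u] = \mu_k[\Sigma_{t_2}]$. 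The main obstacle I expect is this last step, specifically the careful transfer of irreducibility of $F_{k;t_\bullet}$ from the polynomial ring to the Laurent polynomial ring $R_0$, which is precisely where the hypothesis that exchange polynomials are nontrivial (non-unit) enters.
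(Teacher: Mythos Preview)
Your argument is correct, and reaches the same endpoint as the paper (reduce to $x_{k;t_1}=c\,x_{k;t_2}^{\pm 1}$ with $c$ a unit of $R_0$, then invoke Lemma~\ref{mainlem}), but the route is genuinely different in all three sub-steps.

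For the monomial shape, the paper writes $x_{k;t_2}=L_1x_{k;t_1}^{\pm1}+L_0$ by using that the two one-variable Laurent expansions are mutual inverses, and only afterwards kills $L_0$ by expanding $x_{k;u}$ at $\Sigma_{t_1}$ and invoking the Laurent phenomenon. Your product trick $x_{k;t_1}\cdot x_{k;v}=\hat F_{k;t_1}\in R_0$ collapses the $y$-support in one stroke and is cleaner. For the exponent, the paper extracts $\pm1$ from the same inverse-function constraint, while you use the transcendence-degree fact $[E(y):E(y^L)]=|L|$; this is valid (and avoids any subtlety about inverse Laurent polynomials) but is less elementary. For the coefficient, the paper simply applies the Laurent phenomenon in the reverse direction: from $x_{k;t_1}=c\,y$ one gets $y=c^{-1}x_{k;t_1}\in R_0[x_{k;t_1}^{\pm1}]$, hence $c^{-1}\in R_0$ and $c$ is a unit of $R_0$; likewise in the $L=-1$ case one pairs the expansions of $x_{k;t_1}$ and $x_{k;u}$ in each other's cluster. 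Your irreducibility argument in $R_0$ is correct (nontriviality of $F_{k;t_\bullet}$ is exactly what prevents it from becoming a unit upon inverting the $x_j$, and non-divisibility by any $x_j$ preserves irreducibility in the localization), but it is heavier than needed: the two-sided Laurent phenomenon already forces $c$ (resp.\ $c/F$) to be a unit in $R_0$ without ever appealing to irreducibility there.
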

\begin{proof}
Without loss generality, we assume that $r_i=1$ for $i\neq k$. It does not make difference to the proof.

By Laurent phenomenon, we assume that $x_{k;t_2}=f(x_{1;t_1},\cdots,x_{n;t_1})$ and $x_{k;t_1}=g(x_{1;t_2},\cdots,x_{n;t_2})$, where $f\in R[x_{1;t_1}^{\pm1},\cdots,x_{n;t_1}^{\pm1}]$ and $g\in R[x_{1;t_2}^{\pm1},\cdots,x_{n;t_2}^{\pm1}]$. Since $x_{i;t_1}=x_{i;t_2}$ for any $i\neq k$, we know that $x_{k;t_1}$ entries $f$ with exponent $1$ or $-1$; Thus $x_{k;t_2}$ has the form of $x_{k;t_2}=L_1x_{k;t_1}^{\pm1}+L_0$, where  $L_1\neq 0$   and $L_0$ are Laurent polynomials in $$R[x_{1;t_1}^{\pm1},\cdots,\hat x_{k;t_1},\cdots,x_{n;t_1}^{\pm1}]=R[x_{1;t_2}^{\pm1},\cdots,\hat x_{k;t_2},\cdots,x_{n;t_2}^{\pm1}].$$

Let $(\mathbf{x}_{u},{\bf F}_{u})=\mu_k(\Sigma_{t_2})$ and $(\mathbf{x}_{v},{\bf F}_{v})=\mu_k(\Sigma_{t_1})$.
From the definition of the LP mutation, we know
\begin{eqnarray}
x_{i;u}=\begin{cases}x_{i;t_2}&\text{if }i\neq k\\ \hat{F}_{k;t_2}/x_{k;t_2}&\text{if } i=k\end{cases}\text{  and }x_{i;v}=\begin{cases}x_{i;t_1}&\text{if }i\neq k\\ \hat{F}_{k;t_1}/x_{k;t_1}&\text{if } i=k\end{cases}.\nonumber
\end{eqnarray}
Thus $x_{k;u}=\hat{F}_{k;t_2}(x_{1;t_2},\cdots,\hat x_{k;t_2},\cdots,x_{n;t_2})/x_{k;t_2}=\frac{\hat{F}_{k;t_2}(x_{1;t_1},\cdots,\hat x_{k;t_1},\cdots,x_{n;t_1})}{L_1x_{k;t_1}^{\pm1}+L_0}$.
From the definition of the exchange Laurent polynomial, we know the above equation has the form of
\begin{eqnarray}
x_{k;u}=\frac{F_{k;t_2}(x_{1;t_1},\cdots,\hat x_{k;t_1},\cdots,x_{n;t_1})}{L_1x_{k;t_1}^{\pm1}+L_0}M,
\end{eqnarray}
where $M$ is a Laurent monomial in $R[x_{1;t_1}^{\pm1},\cdots,\hat x_{k;t_1},\cdots,x_{n;t_1}^{\pm1}]$.
The above equation is just  the expansion of $x_{k;u}$ with respect to $\mathbf{x}_{t_1}$.
By Laurent phenomenon, and the fact $x_{k;t_1}\notin F_{k;t_2}(x_{1;t_1},\cdots,\hat x_{k;t_1},\cdots,x_{n;t_1})$, we obtain that $L_0=0$ and $\frac{F_{k;t_2}(x_{1;t_1},\cdots,\hat x_{k;t_1},\cdots,x_{n;t_1})}{L_1}$ is a Laurent polynomial in $R[x_{1;t_1}^{\pm1},\cdots,\hat x_{k;t_1},\cdots,x_{n;t_1}^{\pm1}]$.
Thus we have that $x_{k;t_2}=L_1x_{k;t_1}^{\pm 1}$ and $x_{k;u}$ has the form of  $x_{k;u}=\tilde M x_{k;t_1}^{\mp 1}$, where $\tilde M$ is a Laurent polynomial in $R[x_{1;t_1}^{\pm1},\cdots,\hat x_{k;t_1},\cdots,x_{n;t_1}^{\pm1}]$.

We claim that $\frac{F_{k;t_2}(x_{1;t_1},\cdots,\hat x_{k;t_1},\cdots,x_{n;t_1})}{L_1}$ is actually a Laurent monomial, i.e., $\tilde M$ is a Laurent monomial in $R[x_{1;t_1}^{\pm1},\cdots,\hat x_{k;t_1},\cdots,x_{n;t_1}^{\pm1}]$.

{\bf Case (i):} $x_{k;t_2}=L_1x_{k;t_1}$. Then $x_{k;t_1}=L_1^{-1}x_{k;t_2}$, which is the expansion of $x_{k;t_1}$ with respect to $\mathbf{x}_{t_2}$. By Laurent phenomenon, we can get that $L_1$ is a Luarent monomial in $$R[x_{1;t_1}^{\pm1},\cdots,\hat x_{k;t_1},\cdots,x_{n;t_1}^{\pm1}].$$ Then by Lemma \ref{mainlem}, $L_1$ is a unit in $R$ and $[\Sigma_{t_1}]=[\Sigma_{t_2}]$.

{\bf Case (ii):} $x_{k;t_2}=L_1x_{k;t_1}^{-1}$, in this case, $x_{k;u}=\tilde Mx_{k;t_1}$. By the same argument in case (i), we can get that $\tilde M$ is a Luarent monomial in  $R[x_{1;t_1}^{\pm1},\cdots,\hat x_{k;t_1},\cdots,x_{n;t_1}^{\pm1}]$. Then by Lemma \ref{mainlem}, $\tilde M$ is a unit in $R$ and  $[\Sigma_{t_1}]=[(\mathbf{x}_{u},\mathbf{F}_{u})]=[\mu_k(\Sigma_{t_2})]=\mu_k([\Sigma_{t_2}])$.
\end{proof}

\begin{remark}
The same method also works for cluster algebras and one can get the similar result.
\end{remark}

\section{On upper and lower bound of LP algebras}
The following definitions are natural generalizations of the correspondent notions of cluster algebras in \cite{CA3}.

For $i\in [1,n]$, we define the adjacent cluster $\mathbf{x}_i$ by $\mathbf{x}_i=(\mathbf{x}-\{x_i\})\cup \{x_i'\}$ where the cluster variables $x_i$ and $x_i'$ are related by exchange Laurent polynomial $\hat{F}_i$. Let $R[\mathbf{x}^{\pm 1}]$ be the ring of Laurent polynomials in $x_1,\dots,x_n$ with coefficients in $R$.

\begin{definition}
The \textbf{upper bound} $\mathcal{U}(\Sigma)$ and \textbf{lower bound} $\mathcal{L}(\Sigma)$ associated with a LP seed $\Sigma=(\mathbf{x,F})$ is defined by
$$\mathcal{U}(\Sigma)=R[\mathbf{x}^{\pm 1}]\cap R[\mathbf{x}_1^{\pm 1}]\cap\cdots \cap R[\mathbf{x}_n^{\pm 1}],\ \mathcal{L}(\Sigma)=R[x_1,x_1',\dots,x_n,x_n']$$
\end{definition}

Thus, $\mathcal{L}(\Sigma)$ is the $R$-subalgebra of $\mathcal{F}$ generated by the union of $n+1$ clusters $\mathbf{x}^{\pm 1},\mathbf{x}_1^{\pm 1},\dots,\mathbf{x}_n^{\pm 1}$. Note that $\mathcal{L}(\Sigma)\subseteq\mathcal{A}(\Sigma)\subseteq\mathcal{U}(\Sigma)$.

\subsection{Upper bound as invariant under LP mutation}\quad

For any LP seed $\Sigma=(\mathbf{x,F})$, the following two lemmas hold parallel to the correspondent results in \cite{CA3}.
\begin{lemma}\label{4.1}
\begin{equation}\label{u1}
\mathcal{U}(\Sigma)=\bigcap\limits_{j=1}^{n}R[x_1^{\pm1},\dots,x_{j-1}^{\pm1},x_j,x_j',x_{j+1}^{\pm1},\dots,x_n^{\pm1}].
\end{equation}
\end{lemma}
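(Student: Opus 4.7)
The plan is to reduce the lemma to the pointwise identity
$$R[\mathbf{x}^{\pm 1}]\cap R[\mathbf{x}_j^{\pm 1}]=R[x_1^{\pm 1},\dots,x_{j-1}^{\pm 1},x_j,x_j',x_{j+1}^{\pm 1},\dots,x_n^{\pm 1}] \qquad (\star)$$
for each fixed $j\in[1,n]$. Once $(\star)$ is established, intersecting both sides over $j$ yields the lemma, since
$$\mathcal{U}(\Sigma)=R[\mathbf{x}^{\pm 1}]\cap\bigcap_{j=1}^{n} R[\mathbf{x}_j^{\pm 1}]=\bigcap_{j=1}^{n}\bigl(R[\mathbf{x}^{\pm 1}]\cap R[\mathbf{x}_j^{\pm 1}]\bigr).$$

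The ``$\supseteq$'' inclusion in $(\star)$ is by localization. Since $\hat F_j\in R[x_1^{\pm 1},\dots,\hat x_j,\dots,x_n^{\pm 1}]$ and $x_jx_j'=\hat F_j$, we have $x_j'=\hat F_j/x_j\in R[\mathbf{x}^{\pm 1}]$ and symmetrically $x_j=\hat F_j/x_j'\in R[\mathbf{x}_j^{\pm 1}]$; hence the right-hand ring of $(\star)$ embeds into each localization and thus into their intersection.

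For the harder ``$\subseteq$'' inclusion, I would fix $f\in R[\mathbf{x}^{\pm 1}]\cap R[\mathbf{x}_j^{\pm 1}]$ and write the two unique finite Laurent expansions
$$f=\sum_{k\in\mathbb{Z}} c_k\,x_j^k=\sum_{m\in\mathbb{Z}} d_m\,(x_j')^m,\qquad c_k,\,d_m\in R[x_1^{\pm 1},\dots,\hat x_j,\dots,x_n^{\pm 1}].$$
Substituting $x_j'=\hat F_j/x_j$ into the second sum and comparing coefficients of $x_j^k$ in the first sum yields the key identity $c_k=d_{-k}\,\hat F_j^{-k}$ for every $k$. For $k\ge 0$, the term $c_kx_j^k$ already sits in $R[x_1^{\pm 1},\dots,\hat x_j,\dots,x_n^{\pm 1}][x_j]$; for $k<0$ (so $-k>0$), the relation $\hat F_j=x_jx_j'$ gives
$$c_kx_j^k=d_{-k}\,\hat F_j^{-k}\,x_j^k=d_{-k}(x_jx_j')^{-k}\,x_j^k=d_{-k}(x_j')^{-k}\in R[x_1^{\pm 1},\dots,\hat x_j,\dots,x_n^{\pm 1}][x_j'].$$
Summing over $k$ exhibits $f$ as an element of $R[x_1^{\pm 1},\dots,x_j,x_j',\dots,x_n^{\pm 1}]$, giving $(\star)$.

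The main obstacle I expect is the coefficient comparison that yields $c_k=d_{-k}\,\hat F_j^{-k}$: this step extracts, for each negative $k$, the divisibility $\hat F_j^{-k}\mid c_k$ inside the Laurent polynomial ring in the remaining variables, which is precisely what licenses the swap $x_j^k\mapsto (x_j')^{-k}$ used above. The comparison is legitimate because $\mathbf{x}$ is a transcendence basis of $\mathcal F$ over $\mathrm{Frac}(R)$, so $x_j$ is transcendental over $R[x_1^{\pm 1},\dots,\hat x_j,\dots,x_n^{\pm 1}]$ and the Laurent expansion is unique. The overall template parallels the cluster-algebra proof in \cite{CA3}; the only structural change is that the exchange Laurent polynomial $\hat F_j$ (a Laurent polynomial, not necessarily binomial) replaces the binomial exchange relation, but since the argument uses only the identity $x_jx_j'=\hat F_j$ and uniqueness of Laurent expansion, no irreducibility or coprime hypothesis on $\hat F_j$ intervenes here.
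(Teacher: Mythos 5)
Your proof is correct and follows essentially the same route as the paper: both reduce to the single-index identity $R[\mathbf{x}^{\pm 1}]\cap R[\mathbf{x}_j^{\pm 1}]=R[x_1^{\pm 1},\dots,x_j,x_j',\dots,x_n^{\pm 1}]$, expand an element in powers of $x_j$, use membership in $R[\mathbf{x}_j^{\pm 1}]$ together with the substitution $x_j\leftarrow \hat F_j/x_j'$ to deduce that each negative-degree coefficient is divisible by the corresponding power of $\hat F_j$, and then rewrite those terms via $x_jx_j'=\hat F_j$. Your coefficient identity $c_k=d_{-k}\hat F_j^{-k}$ is exactly the paper's observation that $c_{-m}\hat F_1^{-m}\in R[x_2^{\pm 1},\dots,x_n^{\pm 1}]$.
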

\begin{proof}
It is sufficient to show that $$R[\mathbf{x}^{\pm1}]\cap R[\mathbf{x}_1^{\pm1}]=R[x_1,x_1',x_2^{\pm1},\dots,x_n^{\pm1}].$$ The inclusion $\supseteq$ is clear, we only need to prove the converse inclusion.

For any $ y\in R[\mathbf{x}^{\pm1}]\cap R[\mathbf{x}_1^{\pm1}]$, $y$ is of the form $y=\sum\limits_{m=-M}^{N}c_mx_1^m$, where $M,N\in \mathbb{Z}_{\geq0}$ and $c_m\in R[x_2^{\pm1},\dots,x_n^{\pm1}]$.
If $M\geq0$, it is easy to see that $$y\in R[x_1,x_2^{\pm1},\dots,x_n^{\pm1}]\subseteq R[x_1,x_1',x_2^{\pm1},\dots,x_n^{\pm1}].$$

If $MN\neq0$, from the definition of LP seeds, $x_1\notin \hat{F}_1$, then
$$y|_{x_1\leftarrow\frac{\hat{F}_1}{x_1'}}=\sum\nolimits_{m=-M}^{N}c_m(\frac{\hat{F}_1}{x_1'})^m
=\sum\nolimits_{m=1}^{M}c_{-m}\hat{F}_1^{-m}x_1'^{m}+\sum\nolimits_{m=0}^{N}c_{m}\hat{F}_1^{m}x_1'^{-m}.$$
Since $y\in R[\mathbf{x}_1^{\pm1}]$, $y$ can be written as $\sum\limits_{p=M'}^{N'}c_px_1^p$ where $c_p\in R[x_2^{\pm1},\dots,x_n^{\pm1}]$, then we have $c_{-m}\hat{F}_1^{-m}\in R[x_2^{\pm1},\dots,x_n^{\pm1}]$. Thus,
$$y=\sum\limits_{m=1}^{M}c_{-m}\hat{F}_1^{-m}x_1'^{m}+\sum\limits_{m=0}^{N}c_mx_1^m\in R[x_1,x_1',x_2^{\pm1},\dots,x_n^{\pm1}].$$

If $N=0$, by similar discussion, we have $y\in R[x_1',x_2^{\pm1},\dots,x_n^{\pm1}]\subseteq R[x_1,x_1',x_2^{\pm1},\dots,x_n^{\pm1}]$.
\end{proof}

\begin{corollary}\label{4.2}
For $j\in [1,n]$, $y\in R[x_1^{\pm1},\dots,x_{j-1}^{\pm1},x_j,x_j',x_{j+1}^{\pm1},\dots,x_n^{\pm1}]$ if and only if $y$ is of the form $y=\sum\limits_{m=-M}^{N}c_mx_j^m$ where $M,N\in \mathbb{Z}_{\geq0}$, $c_m\in R[x_1^{\pm1},\dots,\hat{x_j}^{\pm1},\dots,x_n^{\pm1}]$ and $c_{-m}$ is divisible by $\hat{F}_j^m$ in $R[x_1^{\pm1},\dots,\hat{x_j}^{\pm1},\dots,x_n^{\pm1}]$ for $m\in [1,M]$.
\end{corollary}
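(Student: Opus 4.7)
The plan is to reduce to the case $j=1$ by symmetry, since the defining relation $x_j x_j' = \hat{F}_j$ together with the fact that $\hat{F}_j$ is independent of $x_j$ takes exactly the same form for every index $j$. With $j=1$ fixed, both directions are essentially the content of the case $MN \neq 0$ already worked out inside the proof of Lemma~\ref{4.1}; I will just package that computation into the ``iff'' statement.

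For the \emph{if} direction, suppose $y = \sum_{m=-M}^{N} c_m x_1^m$ with $c_{-m} = \hat{F}_1^{\,m} d_m$ for some $d_m \in R[x_2^{\pm 1},\ldots,x_n^{\pm 1}]$ whenever $1 \le m \le M$. Using $x_1' = \hat{F}_1/x_1$ I rewrite $c_{-m} x_1^{-m} = d_m (x_1')^m$, and hence
\[
y = \sum_{m=0}^{N} c_m\, x_1^m + \sum_{m=1}^{M} d_m\, (x_1')^m \in R[x_1, x_1', x_2^{\pm 1},\ldots,x_n^{\pm 1}].
\]

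For the \emph{only if} direction, given $y \in R[x_1, x_1', x_2^{\pm 1},\ldots,x_n^{\pm 1}]$ I pick a presentation $y = \sum_{i,k \ge 0} a_{i,k}\, x_1^i (x_1')^k$ with $a_{i,k} \in R[x_2^{\pm 1},\ldots,x_n^{\pm 1}]$, substitute $x_1' = \hat{F}_1/x_1$, and collect by powers of $x_1$. The resulting Laurent expansion of $y$ in $x_1$ is unique because $x_1,\ldots,x_n$ are algebraically independent over $\mathrm{Frac}(R)$. For $p \ge 1$, the coefficient of $x_1^{-p}$ works out to
\[
c_{-p} = \sum_{k \ge p} a_{k-p,k}\, \hat{F}_1^{\,k} = \hat{F}_1^{\,p} \sum_{k \ge p} a_{k-p,k}\, \hat{F}_1^{\,k-p},
\]
which is visibly divisible by $\hat{F}_1^{\,p}$ in $R[x_2^{\pm 1},\ldots,x_n^{\pm 1}]$, as claimed.

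The only conceptual point I would flag is that the divisibility condition in the statement must be interpreted as a condition on the \emph{unique} Laurent expansion $y = \sum c_m x_1^m$ in $R[x_1^{\pm 1}, x_2^{\pm 1},\ldots,x_n^{\pm 1}]$, rather than on some particular presentation of $y$ in the generators $x_1, x_1'$ (which is not unique, because of the relation $x_1 x_1' = \hat{F}_1$). Well-definedness of the $c_m$ is secured by the algebraic independence invoked above. Beyond this, I do not anticipate any genuine obstacle: the corollary is a bookkeeping reformulation of the computation already carried out in the proof of Lemma~\ref{4.1}.
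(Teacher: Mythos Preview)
Your proposal is correct and matches the paper's approach: the paper states Corollary~\ref{4.2} without proof, treating it as an immediate consequence of the computation inside the proof of Lemma~\ref{4.1} (the substitution $x_1 \leftarrow \hat F_1/x_1'$ there is the mirror of your $x_1' \leftarrow \hat F_1/x_1$). Your explicit packaging of both directions and your remark on the uniqueness of the Laurent expansion are entirely in line with what the paper leaves implicit.
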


\begin{lemma}
Suppose that $\hat{F}_j=F_j$ for $j\in[1,2]$, then $R[x_1,x_2^{\pm1}]\cap R[x_1^{\pm1},x_2,x_2']=R[x_1,x_2,x_2'].$
\label{lem1}
\end{lemma}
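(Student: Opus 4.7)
The plan is to prove the nontrivial inclusion by extracting the Laurent expansion in $x_2$ from both defining conditions and then upgrading a divisibility from $R[x_1^{\pm 1}]$ to $R[x_1]$ using irreducibility of $F_2$. The reverse inclusion $\supseteq$ is routine: the hypothesis $\hat{F}_2=F_2$ together with $x_2\notin F_2$ gives $x_2'=F_2/x_2\in R[x_1,x_2^{\pm 1}]$, while $R[x_1,x_2,x_2']\subseteq R[x_1^{\pm 1},x_2,x_2']$ is obvious.

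For $\subseteq$, take $y\in R[x_1,x_2^{\pm 1}]\cap R[x_1^{\pm 1},x_2,x_2']$. Using membership in $R[x_1,x_2^{\pm 1}]$, I would expand $y=\sum_{m=-M}^{N}b_m x_2^m$ with each $b_m\in R[x_1]$. Using membership in $R[x_1^{\pm 1},x_2,x_2']$ together with the hypothesis $\hat{F}_2=F_2$, Corollary \ref{4.2} applied at $j=2$ gives that $F_2^m$ divides $b_{-m}$ in $R[x_1^{\pm 1}]$ for every $m\in [1,M]$. Once this divisibility is upgraded to $R[x_1]$, writing $b_{-m}=F_2^m e_m$ with $e_m\in R[x_1]$ lets me rewrite $b_{-m}x_2^{-m}=e_m (F_2/x_2)^m=e_m(x_2')^m$, so that $y=\sum_{m=0}^{N}b_m x_2^m+\sum_{m=1}^{M}e_m(x_2')^m\in R[x_1,x_2,x_2']$, as required.

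The main obstacle is exactly that divisibility upgrade. The idea is: writing $b_{-m}=F_2^m g$ with $g\in R[x_1^{\pm 1}]$ and factoring $g=x_1^{-k}g'$ with $g'\in R[x_1]$ and $k\geq 0$, clearing denominators yields $x_1^k b_{-m}=F_2^m g'$ in $R[x_1]$. Since $R$ is a UFD, so is $R[x_1]$; and since $F_2$ is irreducible in $R[x_1]$ with $x_1\nmid F_2$, the elements $F_2$ and $x_1$ are coprime in $R[x_1]$, forcing $F_2^m\mid b_{-m}$ in $R[x_1]$. This coprimality step is the only substantive piece of the argument and is where the irreducibility of $F_2$, together with $x_1\nmid F_2$ from the definition of a LP seed, is genuinely used; everything else is bookkeeping about Laurent expansions.
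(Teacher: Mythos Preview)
Your argument is correct, but it takes a different route from the paper's. You expand $y$ in powers of $x_2$, invoke Corollary~\ref{4.2} to obtain $F_2^m\mid b_{-m}$ in $R[x_1^{\pm1}]$, and then upgrade this divisibility to $R[x_1]$ via the UFD property and $x_1\nmid F_2$. (In fact full irreducibility of $F_2$ is not needed here: since $x_1$ is prime in $R[x_1]$ and $x_1\nmid F_2$, one gets $\gcd(x_1^k,F_2^m)=1$ directly, so $x_1^k b_{-m}=F_2^m g'$ forces $x_1^k\mid g'$.) The paper instead expands $y$ in powers of $x_1$ using the representation coming from $R[x_1^{\pm1},x_2,x_2']$, writes the coefficient of $x_1^M$ as $c_M+c_M'(x_2)+c_M''(x_2')$, and argues that if the minimal exponent $M$ were negative then the lowest-order-in-$x_1$ part of the Laurent expansion in $(x_1,x_2)$ would be $x_1^M(c_M+c_M'(x_2)+c_M''(r_2/x_2))\neq 0$ (where $r_2=F_2|_{x_1\leftarrow 0}\neq 0$ since $x_1\nmid F_2$), contradicting $y\in R[x_1,x_2^{\pm1}]$. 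The paper's approach is a short leading-term contradiction that avoids both Corollary~\ref{4.2} and any coprimality step; your approach is more systematic and foreshadows exactly the divisibility-plus-coprimality mechanism used later in the proof of Theorem~\ref{Thmc}.
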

\vspace{-0.7cm}
\begin{proof}
The inclusion $\supseteq$ is clear, we only need to prove the converse inclusion.
For $y \in R[x_1,x_2^{\pm1}]\cap R[x_1^{\pm1},x_2,x_2']$, $y$ is of the form $y=\sum\limits_{m\in \mathbb{Z}}x_1^m(c_m+c_m'(x_2)+c_m''(x_2'))$, where $c_m \in R$, $c_m'(x_2)$ and $c_m''(x_2')$ are polynomials over $R$ without constant terms.

Let $M$ be the smaller integer such that $c_M+c_M'(x_2)+c_M''(x_2')\neq0$. If $M\geq0$, then it is easy to see that $y\in R[x_1,x_2,x_2']$.

Otherwise, the Laurent expression of $y$ is $\sum\limits_{m\in \mathbb{Z}}x_1^m(c_m+c_m'(x_2)+c_m''(\frac{F_2}{x_2}))$ by the assumption. Let $r_2$ be the sum of monomials in $F_2$ without $x_1$. Then there are nonzero terms with smallest power of $x_1$ in the Laurent expression of $y$, which are $x_1^M(c_M+c_M'(x_2)+c_M''(\frac{r_2}{x_2}))\neq0$, which contradicts the condition that $y\in R[x_1,x_2^{\pm1}]$.
\end{proof}

\begin{lemma}
Suppose that $\hat{F}_j=F_j$ for $j\in[1,2]$, then $$R[x_1,x_1',x_2^{\pm1}]=R[x_1,x_1',x_2,x_2']+R[x_1,x_2^{\pm1}].$$
\label{lem2}
\end{lemma}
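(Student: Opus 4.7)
The plan is to establish $\supseteq$ trivially and then build the decomposition on the other side by peeling off obvious pieces and reducing to a clean induction. For $\supseteq$, I note that the assumption $\hat{F}_2=F_2$ gives $x_2' = F_2(x_1)/x_2 \in R[x_1,x_2^{\pm 1}]$, so $R[x_1,x_1',x_2,x_2']$ and $R[x_1,x_2^{\pm1}]$ both sit inside $R[x_1,x_1',x_2^{\pm 1}]$, making the inclusion automatic.

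For the reverse inclusion, I would take $y \in R[x_1,x_1',x_2^{\pm 1}]$ and invoke Corollary \ref{4.2} (with $j=1$, using $\hat{F}_1=F_1$) to rewrite it in the unique normal form
\[
y \;=\; \sum_{m\ge 0} c_m(x_2)\, x_1^m \;+\; \sum_{m\ge 1} d_m(x_2)\,(x_1')^m,\qquad c_m,d_m\in R[x_2^{\pm 1}],
\]
where the coefficients $c_{-m}=F_1^m d_m$ have already been absorbed into the $(x_1')^m$ terms. The first sum lives in $R[x_1,x_2^{\pm 1}]$ and is dumped into the $b$-part. For each $d_m$, I split $d_m = d_m^+ + d_m^-$ with $d_m^+\in R[x_2]$ and $d_m^-\in x_2^{-1}R[x_2^{-1}]$; the contribution $\sum d_m^+(x_2)(x_1')^m$ lies in $R[x_1',x_2]\subseteq R[x_1,x_1',x_2,x_2']$ and goes into the $a$-part. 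All that remains is to show that the residue $\xi:=\sum_{m\ge 1} d_m^-(x_2)(x_1')^m$, with each coefficient a polynomial in $x_2^{-1}$ without constant term, belongs to $R[x_1,x_1',x_2,x_2']+R[x_1,x_2^{\pm 1}]$.

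The heart of the proof is an induction on the largest $m$ such that $d_m^-\neq 0$, driven by the key identity
\[
(x_1')^m(x_2')^k \;=\; \frac{F_1(x_2)^m F_2(x_1)^k}{x_1^m x_2^k} \;\in\; R[x_1,x_1',x_2,x_2'].
\]
Expanding $F_2(x_1)^k=\sum_j \gamma_j^{(k)} x_1^j$ and applying the relation $x_1^j(x_1')^m = F_1^{\min(j,m)} x_1^{(j-m)_+}(x_1')^{(m-j)_+}$, I rewrite the left-hand side as a sum whose "top" term is a multiple of $x_2^{-k}(x_1')^m$, whose "bulk" contributes elements lying in $R[x_1,x_2^{\pm 1}]$ (the pieces with $j\ge m$) or in $R[x_1',x_2]\subseteq R[x_1,x_1',x_2,x_2']$ (the positive-$x_2$ pieces after splitting), and whose residue involves only $(x_1')^{m'}$ with $m'<m$. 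Combining such identities for different values of $k$ cancels the top-degree part of $\xi$ modulo $R[x_1,x_1',x_2,x_2']+R[x_1,x_2^{\pm 1}]$, after which the inductive hypothesis finishes the job.

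The step I expect to be the real obstacle is precisely the cancellation of the top-degree term: the coefficient extracted in front of $x_2^{-k}(x_1')^m$ from $(x_1')^m(x_2')^k$ is the constant term $F_2(0)^k$, and if this is not a unit in $R$ the naive scheme produces only $F_2(0)^k$-multiples of the desired monomial. Handling this will require exploiting the full system of identities for varying $(m,k)$ together, packaging $\xi$ as a whole (rather than term by term) so that the actual coefficients of $d_m^-$ can be matched by integer combinations of the $(x_1')^a(x_2')^b$, and carefully tracking what the expansion of $F_2(x_1)^k$ contributes through the relation $x_1(x_1')^i = F_1(x_2)(x_1')^{i-1}$. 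This is also the place where the hypothesis $\hat{F}_2=F_2$ should enter essentially a second time (beyond merely guaranteeing that $x_2'\in R[x_1,x_2^{\pm 1}]$), by controlling the shape of $F_2$ enough that the reduction is forced to close up rather than spiral into higher $(x_1')$-degrees.
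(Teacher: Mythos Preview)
You have put your finger on exactly the right obstruction, and it is not a technicality that further bookkeeping will dissolve. Your induction on the top $(x_1')$-degree $m$ tries to cancel $x_2^{-k}(x_1')^m$ using $(x_1')^m(x_2')^k$, but the coefficient this produces in front of $x_2^{-k}(x_1')^m$ is $r_2^{\,k}$, where $r_2:=F_2|_{x_1\leftarrow 0}$; if $r_2$ is not a unit in $R$, no $R$-combination of such identities reaches a general coefficient, and your remarks about ``packaging $\xi$ as a whole'' do not supply the missing mechanism. In fact the obstacle is fatal for the statement as written: take $R=\mathbb{Z}$, $F_1=x_2+1$, $F_2=x_1+2$ (both irreducible, not divisible by any $x_j$, and one checks $\hat F_j=F_j$). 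Then $x_1'x_2^{-1}=(x_2+1)/(x_1x_2)$ cannot be written as $a+b$ with $a\in\mathbb{Z}[x_1,x_1',x_2,x_2']$ and $b\in\mathbb{Z}[x_1,x_2^{\pm1}]$: the $x_2^{-1}$-coefficient of any such $a$ lies in $(x_1+2)\,\mathbb{Z}[x_1^{\pm1}]$ while that of $b$ lies in $\mathbb{Z}[x_1]$, and clearing denominators and evaluating at $x_1=-2$ shows $x_1^{-1}\notin (x_1+2)\,\mathbb{Z}[x_1^{\pm1}]+\mathbb{Z}[x_1]$. So no amount of extra structure from $\hat F_2=F_2$ will rescue the scheme.

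For comparison, the paper takes a structurally different route: it rewrites $x_2x_2'=g(x_1)+r_2$ as $x_2^{-1}=r_2^{-1}x_2'+p(x_1)\,x_2^{-1}$ with $p(x_1):=-g(x_1)/r_2$, iterates $N$ times to obtain $x_2^{-1}=P(x_1,x_2')+p(x_1)^N x_2^{-1}$, and then multiplies through by $(x_1')^N$, using that $p^N$ is divisible by $x_1^N$ so that $(x_1')^N p^{NK}\in R[x_1,x_2]$. The crucial step is the assertion that $p(x_1)\in R[x_1]$ and hence $P(x_1,x_2')\in R[x_1,x_2']$; this is precisely the hypothesis that $r_2$ is a unit (or at least divides every coefficient of $g$), inserted without comment. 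So the paper's argument carries the very same gap you identified, merely hidden one line earlier, and it does not provide the idea your proposal is missing.
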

%\vspace{-0.7cm}
\begin{proof}
The inclusion $\supseteq$ is clear, we only need to prove the converse inclusion. It is enough to show that $\forall M,N>0$, $x_1'^Nx_2^{-M}\in R[x_1,x_1',x_2,x_2']+R[x_1,x_2^{\pm1}]$.

By the assumption, we have $x_2x_2'=\hat{F}_2=F_2=g(x_1)+r_2$, where $g(x_1)=\sum\limits_{i=1}^{m}g_ix_1^i$, $g_i\in R$ and $r_2\neq 0\in R$ since $F_2$ is not divisible by $x_1$. If $g(x_1)=0$, then $x_2^{-1}=r_2^{-1}x_2'$, which implies that $x_1'^Nx_2^{-M}\in R[x_1,x_1',x_2,x_2']$.

Otherwise, let $p(x_1)=-\frac{g(x_1)}{r_2}\in R[x_1]$, then $x_2x_2'=g(x_1)+r_2$ can be written as
$$x_2^{-1}=r_2^{-1}x_2'+p(x_1)x_2^{-1}.$$
Repeatedly substituting $x_2^{-1}$ in the RHS of the above equation by $r_2^{-1}x_2'+p(x_1)x_2^{-1}$, we obtain $x_2^{-1}=P(x_1,x_2')+p^N(x_1) x_2^{-1}$, where $P(x_1,x_2')=r_2^{-1}x_2'\sum\limits_{i=0}^{N-1}p^i(x_1)\in R[x_1,x_2']$.

Then we have
\begin{align}\label{x1x2'}
x_1'^Nx_2^{-M}&=x_1'^NP^M(x_1,x_2')+x_1'^Np^{MN}(x_1) x_2^{-M}\nonumber\\
&+x_1'^N\sum\nolimits_{i=1}^{M-1}\binom{M}{i}(P(x_1,x_2'))^{M-i}(p(x_1)^Nx_2^{-1})^i,
\end{align}
where the first term of (\ref{x1x2'}) that is, $x_1'^NP^M(x_1,x_2')\in R[x_1,x_1',x_2']$.

For $p(x_1)=-\frac{1}{r_2}g(x_1)=-\frac{1}{r_2}\sum\limits_{i=1}^{m}g_ix_1^i$,  the smallest power of $x_1$ in $p^N(x_1)$ is $N$ and the greatest is $Nm$.
Thus we can rewrite $p^N(x_1)$ in the form $x_1^N(\sum\limits_{i=0}^{N(m-1)}p_ix_1^i)$ where $p_i\in R$, implying that for any integer $K>0$, we have $p^{NK}(x_1)\in x_1^NR[x_1]$. Since $x_1x_1'=\hat{F}_1=F_1\in R[x_2]$, we have $x_1'^Np^{NK}(x_1)\in R[x_1,x_2]$.

Then the middle term of (\ref{x1x2'}) is obvious in $R[x_1,x_2^{\pm1}]$, and the last term of (\ref{x1x2'}) is equal to $x_1'^N\sum\limits_{i=1}^{M-1}\binom{M}{i}(P(x_1,\frac{F_2}{x_2}))^{M-i}(p(x_1)^N x_2^{-1})^i\in R[x_1,x_2^{\pm1}]$.

Thus we finish the proof.
\end{proof}

\begin{proposition}
Suppose that $n\geq2$ and $\hat{F}_j=F_j$ for $j\in[1,n]$, then
\begin{equation}\label{u2}
\mathcal{U}(\Sigma)=\bigcap\limits_{j=2}^{n}R[x_1,x_1',x_2^{\pm1},\dots,x_{j-1}^{\pm1},x_j,x_j',x_{j+1}^{\pm1},\dots,x_n^{\pm1}].
\end{equation}
%\vspace{-1cm}
\label{U}
\end{proposition}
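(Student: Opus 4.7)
The strategy is to combine Lemma \ref{4.1} with the two-variable identities in Lemmas \ref{lem1} and \ref{lem2}. By Lemma \ref{4.1}, $\mathcal U(\Sigma) = \bigcap_{j=1}^n A_j$ where $A_j := R[x_1^{\pm 1},\dots,x_{j-1}^{\pm 1},x_j,x_j',x_{j+1}^{\pm 1},\dots,x_n^{\pm 1}]$, and let $B_j$ denote the $j$-th factor on the right-hand side of \eqref{u2}. Since $A_1$ is common to every pair $A_1 \cap A_j$, distributivity gives $\bigcap_{j=1}^n A_j = \bigcap_{j=2}^n (A_1 \cap A_j)$, so the proposition reduces to proving the single identity $A_1 \cap A_j = B_j$ for each $j \in [2,n]$.

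To prove this identity, I would absorb the variables other than $x_1$ and $x_j$ into the coefficient ring by setting $R' := R[x_2^{\pm 1},\dots,\hat{x_j}^{\pm 1},\dots,x_n^{\pm 1}]$, so that $A_1 = R'[x_1,x_1',x_j^{\pm 1}]$, $A_j = R'[x_1^{\pm 1},x_j,x_j']$, and $B_j = R'[x_1,x_1',x_j,x_j']$. The hypothesis $\hat F_k = F_k$ for $k \in \{1,j\}$, combined with the LP seed axioms $x_k \notin F_k$ and $x_i \nmid F_k$ (applied at $i = j$ and $i = 1$ respectively), forces $F_1 = x_1 x_1' \in R'[x_j]$ with nonzero $x_j$-constant term and $F_j = x_j x_j' \in R'[x_1]$ with nonzero $x_1$-constant term. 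These are precisely the two-variable inputs needed in the proofs of Lemmas \ref{lem1} and \ref{lem2}, so both lemmas carry over verbatim with $R$ replaced by $R'$ and the pair of variables $(x_1,x_2)$ replaced by $(x_1,x_j)$.

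Next, apply Lemma \ref{lem2} over $R'$ to obtain $A_1 = B_j + R'[x_1,x_j^{\pm 1}]$. Since $x_1' = F_1/x_1 \in R'[x_1^{-1},x_j] \subseteq A_j$, we have $B_j \subseteq A_j$, so the modular law $(U+V) \cap W = U + (V \cap W)$, valid whenever $U \subseteq W$, yields $A_1 \cap A_j = B_j + \bigl(R'[x_1,x_j^{\pm 1}] \cap R'[x_1^{\pm 1},x_j,x_j']\bigr)$. By Lemma \ref{lem1} applied over $R'$, the intersection on the right equals $R'[x_1,x_j,x_j']$, which lies inside $B_j$. Hence $A_1 \cap A_j = B_j$ for every $j \in [2,n]$, and combining with the reduction of the first paragraph finishes the proof.

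The only step in the argument that is not purely formal is the transfer of Lemmas \ref{lem1} and \ref{lem2} from base $R$ to the localization $R'$. Inspection of their proofs shows that the only properties used are (i) $F_1$ is a polynomial in the second variable, independent of the first, and (ii) the constant term of $F_j$ as a polynomial in the first variable is nonzero; both persist intact under the localization $R \leadsto R'$. I expect no further obstacle, since the substantive work is already done inside Lemmas \ref{lem1} and \ref{lem2} themselves.
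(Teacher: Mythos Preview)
Your proposal is correct and follows essentially the same route as the paper: reduce via Lemma~\ref{4.1} to showing $A_1\cap A_j=B_j$ for each $j\in[2,n]$, absorb the remaining variables into the ground ring, and then combine Lemma~\ref{lem2} with the modular law and Lemma~\ref{lem1}. The paper carries this out only for $j=2$ (writing the localization as ``freeze $x_3,\dots,x_n$''), whereas you state it for general $j$ and are more explicit about why the two-variable lemmas survive the base change; these are purely expository differences.
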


\begin{proof}
Comparing  (\ref{u1}) with (\ref{u2}), it is sufficient to show that $$R[x_1,x_1',x_2,x_2',x_3^{\pm1},\dots,x_n^{\pm1}]=R[x_1,x_1',x_2^{\pm1},\dots,x_n^{\pm1}]\cap R[x_1^{\pm1},x_2,x_2',x_3^{\pm1},\dots,x_n^{\pm1}].$$
Freeze the cluster variables $x_3,\dots,x_n$ and view $R[x_3^{\pm1},\dots,x_n^{\pm1}]$ as the new ground ring $R$, then the above equality reduces to
\begin{equation}\label{u2pf}
R[x_1,x_1',x_2,x_2']=R[x_1,x_1',x_2^{\pm1}]\cap R[x_1^{\pm1},x_2,x_2'].
\end{equation}

Suppose $F_1=f(x_2)+r_1$, $F_2=g(x_1)+r_2$, where $r_1\neq0,r_2\neq0\in R$ and $f(x_2),g(x_1)$ are polynomials over $R$ without constant terms. It is easy to see that Lemma \ref{lem1} and Lemma \ref{lem2} hold for four cases which are: (\emph{C1}) $x_2\notin F_1$ and $x_1\notin F_2$, that is, $f(x_2)=0$ and $g(x_1)=0$; (\emph{C2}) $x_2\in F_1$ and $x_1\in F_2$; (\emph{C3}) $x_2\notin F_1$ but $x_1\in F_2$; (\emph{C4}) $x_2\in F_1$ and $x_1\notin F_2$. Combining Lemma \ref{lem1} and Lemma \ref{lem2} with the fact that $R[x_1,x_1',x_2,x_2']\subseteq R[x_1^{\pm1},x_2,x_2']$, we obtain:
\begin{align}
  R[x_1,x_1',x_2^{\pm1}]\cap R[x_1^{\pm1},x_2,x_2']\nonumber
  &= (R[x_1,x_1',x_2,x_2']+R[x_1,x_2^{\pm1}])\cap R[x_1^{\pm1},x_2,x_2'] \nonumber \\
  &= R[x_1,x_1',x_2,x_2']+(R[x_1,x_2^{\pm1}]\cap R[x_1^{\pm1},x_2,x_2']) \nonumber \\
  &= R[x_1,x_1',x_2,x_2']\nonumber
\end{align}
Thus we have (\ref{u2pf}).
\end{proof}

\begin{lemma}
For a LP seed $(\mathbf{x,F})$, let $x_2'$ and $x_2''$ be the cluster variables exchanged with $x_2$ in the LP seeds $\mu_2(\mathbf{x,F})$ and $\mu_2\mu_1(\mathbf{x,F})$ respectively, then
\begin{equation}\label{u3}
R[x_1,x_1',x_2,x_2',x_3^{\pm1},\dots,x_n^{\pm1}]=R[x_1,x_1',x_2,x_2'',x_3^{\pm1},\dots,x_n^{\pm1}].
\end{equation}
\label{x2''}
\end{lemma}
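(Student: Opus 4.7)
The plan is to prove the set equality by establishing two inclusions: $x_2'' \in A := R[x_1,x_1',x_2,x_2',x_3^{\pm1},\dots,x_n^{\pm1}]$ and $x_2' \in B := R[x_1,x_1',x_2,x_2'',x_3^{\pm1},\dots,x_n^{\pm1}]$. Since LP mutation is an involution, the seed $(\mathbf{x},\mathbf{F})$ is $\mu_1$ applied to $\mu_1(\mathbf{x},\mathbf{F})$, and in the latter seed the variable $x_2'$ plays exactly the role that $x_2''$ plays for $(\mathbf{x},\mathbf{F})$. Hence the two inclusions are mirror images of one another, and I only need to establish one direction, say $x_2'' \in A$.

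To show $x_2'' \in A$, I will use three defining relations: $x_2 x_2' = \hat{F}_2(x_1,x_3,\dots,x_n)$, $x_1 x_1' = \hat{F}_1(x_2,x_3,\dots,x_n)$, and $x_2 x_2'' = \hat{F}_2^{(1)}(x_1',x_3,\dots,x_n)$, where $\hat{F}_2^{(1)}$ is the exchange Laurent polynomial at position $2$ in $\mu_1(\mathbf{x},\mathbf{F})$. First I expand $\hat{F}_2^{(1)} = \sum_m c_m(x_3,\dots,x_n)(x_1')^m$ with $c_m \in R[x_3^{\pm1},\dots,x_n^{\pm1}]$. The terms with $m \geq 0$, divided by $x_2$, already contribute to $A$ once any negative $x_2$-power in them is traded for a positive power of $x_2'$ via $x_2 x_2' = \hat{F}_2$. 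For the terms with $m<0$, substituting $x_1' = \hat{F}_1/x_1$ produces expressions in which $\hat{F}_1^{|m|}$ appears in the denominator; by Theorem \ref{thmlaurent}, $x_2''$ is itself a Laurent polynomial in $\mathbf{x}$, so all such denominators must cancel in the total sum. The resulting Laurent polynomial has only negative $x_1$-powers to worry about, and these are again eliminated using $x_1^{-k} = (x_1')^k/\hat{F}_1^k$ together with the divisibility properties inherited from the first cancellation, producing an expression in $A$.

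The principal obstacle is the bookkeeping of cancellations under the substitution $x_1'\mapsto \hat{F}_1/x_1$: one must verify that the factors of $\hat{F}_1$ introduced in the denominators of the negative-$(x_1')$-power terms cancel cleanly against contributions of the positive-$(x_1')$-power terms, leaving an honest element of $A$ with only non-negative powers of $x_1, x_1', x_2, x_2'$. This cancellation is controlled by the explicit three-step LP mutation recipe defining $F_2^{(1)}$ (in particular the step removing common factors with $\hat{F}_1|_{x_2\leftarrow 0}$) together with the subsequent extraction of $\hat{F}_2^{(1)}$ from $F_2^{(1)}$. A case analysis on whether $x_1\in F_2$ and whether $x_2\in F_1$, paralleling the case split used in the proof of Proposition \ref{U}, should reduce the verification to elementary Laurent-monomial manipulations in each of the four cases.
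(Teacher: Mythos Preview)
Your high-level framing is right and matches the paper: the involution of $\mu_1$ does make the two inclusions symmetric (the paper simply says ``similarly'' for the second), and the four-case split on whether $x_1\in F_2$ and $x_2\in F_1$ is exactly how the paper organizes the computation after freezing $x_3,\dots,x_n$.

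However, the middle of your plan contains a genuine misstep that would not survive the case analysis you defer to. You propose to treat the terms $c_m(x_1')^m/x_2$ with $m\ge 0$ individually, trading $x_2^{-1}$ for $x_2'/\hat F_2$. But $\hat F_2$ is not a unit in $A$, so this substitution does \emph{not} put a single term into $A$; there is no reason for $\hat F_2(x_1,x_3,\dots,x_n)$ to divide $c_m(x_1')^m$. The obstacle you identify---$\hat F_1$-denominators arising from negative powers of $x_1'$---is in fact not the issue: in the cases that matter (and under the standing hypothesis $\hat F_j=F_j$ that the paper uses here) one checks from the mutation recipe and Proposition~\ref{2.7ofLPA} that $\hat F_2'=F_2'$ is an honest \emph{polynomial} in $x_1'$, so no negative $(x_1')$-powers occur at all. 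The real difficulty is the lone factor $x_2^{-1}$ in $x_2''=\hat F_2'/x_2$, and the Laurent phenomenon for $\mathbf{x}$ and $\mathbf{x}_1$ only gives $x_2''\in R[x_1,x_1',x_2^{\pm1},x_3^{\pm1},\dots,x_n^{\pm1}]$, which is strictly larger than $A$.

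What the paper actually does is an explicit polynomial identity, not a Laurent-phenomenon argument. After freezing to rank~2 it writes $F_1=f(x_2)+r_1$, $F_2=g(x_1)+r_2$ and computes $F_2'=h(x_1')+r_3$ through the three-step recipe. Then it substitutes $r_2=x_2x_2'-g(x_1)$ into $x_2x_2''=F_2'$ and expands the products $g(x_1)x_1'^m$ using $x_1x_1'=f(x_2)+r_1$; every term either already carries an explicit factor $x_2$ or cancels against the constants $r_3$ and $h_j x_1'^j$. One obtains $x_2x_2''=x_2\cdot\bigl(\tfrac{1}{c}x_2'x_1'^m-\sum_iP_i\bigr)$ with $P_i\in R[x_1',x_2]$, hence $x_2''\in A$. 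This divisibility by $x_2$ is a combinatorial identity built into the mutation formula, and it is precisely this computation---not a cancellation of $\hat F_1$-denominators---that carries the proof.
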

\begin{proof}
We can freeze the cluster variables $x_3,\dots,x_n$ and view $R[x_3^{\pm1},\dots,x_n^{\pm1}]$ as the new ground ring $R$. Then we will prove the following equality can be reduced from  (\ref{u3}):
$$R[x_1,x_1',x_2,x_2']=R[x_1,x_1',x_2,x_2''].$$

We first show that $x_2''\in R[x_1,x_1',x_2,x_2']$.

In (\emph{C1}), it is easy to see that $x_2''=rx_2'$ for certain $r\in R$, which implies $x_2''\in R[x_1,x_1',x_2,x_2']$.

In (\emph{C2}), let $(\mathbf{x',F'})=\mu_1(\mathbf{x,F})$, then $x_2''$ is obtained by $x_2x_2''=\hat{F}_2'$.
Recall that $x_1x_1'=\hat{F}_1=F_1=f(x_2)+r_1$ and $x_2x_2'=\hat{F}_2=F_2=g(x_1)+r_2$, where $g(x_1)=\sum\limits_{i=1}^{m}g_ix_1^i$, $g_i\in R$ and $r_2\neq 0\in R$.
Because $F_2$ depends on $x_1$, from the definition of LP mutations, we have:
\begin{enumerate}
  \item $G_2=F_2|_{x_1\leftarrow N_2}=g(\frac{r_1}{x_1'})+r_2$.
  \item $H_2=G_2/c$, where $c$ is the product of all common factors of $g_ir_1^i$ for $i\in [1,m]$ and $r_2$.
  \item $F_2'=MH_2=x_1'^mH_2=h(x_1')+r_3$, \\
  where $r_3=\frac{g_mr_1^m}{c}$, $h(x_1')=\sum\limits_{i=1}^{m}h_ix_1'^i$, $h_j=
\begin{cases}
g_{m-i}r_1^{m-i}/c& j\in[1,m-1],\\
r_2/c& j=m
\end{cases}$.
\end{enumerate}

By Proposition \ref{2.7ofLPA}, there exist $x_2$ in $F_1'=F_1$ in $(\mathbf{x',F'})$, so that there is no $x_1'$ in $F_2'/\hat{F}_2'$, thus we have $\hat{F}_2'=F_2'$. It follows that
\begin{align}%\label{}
  x_2x_2''&=\frac{1}{c}r_2x_1'^m+\sum\nolimits_{j=1}^{m-1}h_jx_1'^j+r_3 \nonumber\\
  &=\frac{1}{c}(x_2x_2'-g(x_1))x_1'^m+\sum\nolimits_{j=1}^{m-1}h_jx_1'^j+r_3 \nonumber\\
  &=x_2(\frac{1}{c}x_2'x_1'^m)-(\frac{1}{c}g(x_1)x_1'^m-(\sum\nolimits_{j=1}^{m-1}h_jx_1'^j)-r_3)\nonumber,
\end{align}
where $\frac{1}{c}g(x_1)x_1'^m=(\sum\limits_{i=1}^{m}\frac{g_i}{c}x_1^i)x_1'^m = \sum\limits_{i=1}^{m}\frac{g_i}{c}(x_1x_1')^ix_1'^{m-i} =\sum\limits_{i=1}^{m}\frac{g_i}{c}(f(x_2)+r_1)^ix_1'^{m-i}$.
Recall that $f(x_2)$ is a polynomial in $x_2$ without constant terms, then $\frac{g_m}{c}(f(x_2)+r_1)^m$ can be written as $x_2P_m+\frac{g_mr_1^m}{c}=x_2P_m+r_3$, where $P_m$ is a polynomial in $x_2$.

For $i\in[1,m-1]$, we have $\frac{g_i}{c}(f(x_2)+r_1)^ix_1'^{m-i}=x_2P_i+\frac{g_ir_1^i}{c}x_1'^{m-i}=x_2P_i+h_{m-i}x_1'^{m-i}$, where $P_i$ is a polynomial in $x_2$.

Then $\frac{1}{c}g(x_1)x_1'^m-(\sum\limits_{j=1}^{m-1}h_jx_1'^j)-r_3=x_2(\sum\limits_{i=1}^{m}P_i)$, which implies that $$x_2x_2''=x_2(\frac{1}{c}x_2'x_1'^m-\sum\limits_{i=1}^{m}P_i).$$ Thus $x_2''\in R[x_1,x_1',x_2,x_2']$.

For (\emph{C3}) and (\emph{C4}), it is enough to show for (\emph{C3}) by symmetry.
At this time, $F_2'$ is the same as that in (\emph{C2}).
Since $f(x_2)=0$, $(F_1'=F_1)|_{x_2\leftarrow F_2'/x_2''}=r_1$ is not divisible by $F_2'$, so that $\hat{F}_2'=F_2'$.
As a consequence, we have $x_2x_2''=x_2(\frac{1}{c}x_2'x_1'^m)$. Thus $x_2''\in R[x_1,x_1',x_2,x_2']$.

On the other hand, we can prove similarly that $x_2'\in R[x_1,x_1',x_2,x_2'']$. Then, (\ref{u3}) follows truely.
\end{proof}

\begin{theorem}\label{thma}
Assume that a LP seed $\Sigma=(\mathbf{x,F})$ satisfied $\hat{F}_j=F_j$ for $j\in[1,n]$ and $\Sigma'=(\mathbf{x',F'})$ is the LP seed obtained from the LP seed $\Sigma$  by mutation in direction $k$. Then the corresponding upper bounds coincide, that is, $\mathcal{U}(\Sigma)=\mathcal{U}(\Sigma')$.
\end{theorem}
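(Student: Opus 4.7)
The plan is to show that $\mathcal{U}(\Sigma)$ and $\mathcal{U}(\Sigma')$ coincide with a common refined intersection, following the template of \cite{CA3}; by relabeling I assume $k=1$.

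First, apply Proposition \ref{U} to $\Sigma$ (valid because $\hat{F}_j=F_j$) to write
\begin{equation*}
\mathcal{U}(\Sigma)=\bigcap_{j=2}^{n}R[x_1,x_1',x_2^{\pm1},\ldots,x_{j-1}^{\pm1},x_j,x_j',x_{j+1}^{\pm1},\ldots,x_n^{\pm1}].
\end{equation*}
Let $x_j''$ denote the cluster variable in $\mu_j\mu_1(\mathbf{x},\mathbf{F})$ exchanged with $x_j$. Because the proof of Lemma \ref{x2''} is symmetric in the role of the index $2$, it applies verbatim with any $j\in[2,n]$ in its place, letting me replace $x_j'$ by $x_j''$ inside each factor to obtain
$\mathcal{U}(\Sigma)=\bigcap_{j=2}^{n}R[x_1,x_1',x_2^{\pm1},\ldots,x_j,x_j'',\ldots,x_n^{\pm1}]$.

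Second, I compute $\mathcal{U}(\Sigma')$ via Lemma \ref{4.1}. The cluster of $\Sigma'$ is $(x_1',x_2,\ldots,x_n)$; its adjacent cluster in direction $1$ is $\mathbf{x}$ (by $\mu_1^2=\mathrm{id}$) and its adjacent cluster in direction $j\ge 2$ is $(x_1',x_2,\ldots,x_{j-1},x_j'',x_{j+1},\ldots,x_n)$, whence
\begin{equation*}
\mathcal{U}(\Sigma')=R[x_1,x_1',x_2^{\pm1},\ldots,x_n^{\pm1}]\cap\bigcap_{j=2}^{n}R[(x_1')^{\pm1},x_2^{\pm1},\ldots,x_j,x_j'',\ldots,x_n^{\pm1}].
\end{equation*}
Comparing with the refined formula for $\mathcal{U}(\Sigma)$, the theorem reduces to showing, for each $j\ge 2$, the intersection identity
\begin{equation*}
R[x_1,x_1',x_2^{\pm1},\ldots,x_n^{\pm1}]\cap R[(x_1')^{\pm1},x_2^{\pm1},\ldots,x_j,x_j'',\ldots,x_n^{\pm1}]=R[x_1,x_1',x_2^{\pm1},\ldots,x_j,x_j'',\ldots,x_n^{\pm1}].
\end{equation*}

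Third, freezing the variables $\{x_i:i\ne 1,j\}$ -- an operation compatible with mutation in directions $1$ and $j$ -- reduces this identity to the rank-$2$ statement $R[x_1,x_1',x_j^{\pm1}]\cap R[(x_1')^{\pm1},x_j,x_j'']=R[x_1,x_1',x_j,x_j'']$. This is the ``mirror'' of equation (\ref{u2pf}) under the swap $(x_1,x_j')\leftrightarrow(x_1',x_j'')$, and the case analysis (C1)--(C4) of Lemmas \ref{lem1} and \ref{lem2} applies verbatim once one checks $\hat{F}'_1=F'_1$ and $\hat{F}'_j=F'_j$ in the frozen rank-$2$ mutated seed. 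The former holds because $\hat{F}_1=F_1$ passes through both freezing and then $\mu_1$ (using that no cluster variable divides $F_1$), while the latter is the intermediate fact obtained inside the proof of Lemma \ref{x2''} via Proposition \ref{2.7ofLPA}. This rank-$2$ transfer is the main technical point of the argument; once it is established, the conclusion $\mathcal{U}(\Sigma)=\mathcal{U}(\Sigma')$ is a formal assembly of the intersection identities above.
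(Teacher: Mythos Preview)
Your proof is correct and follows essentially the same route as the paper's: apply Proposition~\ref{U} to $\Sigma$, then Lemma~\ref{x2''} to swap $x_j'$ for $x_j''$, and identify the result with $\mathcal{U}(\Sigma')$. The paper compresses this last identification into the single phrase ``combining Proposition~\ref{U} and Lemma~\ref{x2''}'', tacitly applying Proposition~\ref{U} to $\Sigma'$ as well; you have instead gone through Lemma~\ref{4.1} for $\Sigma'$ and reduced to the rank-$2$ analog of~(\ref{u2pf}), which is a more careful way of handling the fact that the blanket hypothesis $\hat F_j'=F_j'$ is not assumed for $\Sigma'$ (and, as the example after Remark~\ref{rmk-cor} shows, can genuinely fail in rank $\ge 3$).

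One point deserves tightening: your justification that $\hat F_1'=F_1'$ ``passes through freezing and then $\mu_1$'' is too loose. The quantity $\hat F_1'$ is defined via a divisibility condition involving $F_j'$, which has changed under $\mu_1$, so the original hypothesis $\hat F_1=F_1$ does not transfer automatically. The clean argument is the short case split already implicit in Lemma~\ref{x2''}: if $x_1\in F_j$, then the explicit form $F_j'=h(x_1')+r_3$ computed there shows $x_1'\in F_j'$, and Proposition~\ref{2.7ofLPA} applied inside $\Sigma'$ gives $x_j\notin F_1'/\hat F_1'$, hence $\hat F_1'=F_1'$; if $x_1\notin F_j$, then $F_j'$ agrees with $F_j$ up to a unit and $x_j''$ with $x_j'$ up to a unit, so the divisibility test for $\hat F_1'$ reduces literally to the one for $\hat F_1$. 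With this patch, your argument is complete and in fact more explicit than the paper's own proof.
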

\begin{proof}
Without loss of generality, we assume that $k=1$. Combining Proposition \ref{U} and Lemma \ref{x2''}, we finish the proof.
\end{proof}

%The following lemma shows that exchange polynomials of a LP seed are coprime under the condition that $\hat{F}_k=F_k$ for any $k\in [1,n]$.
\begin{proposition}\label{lem0}
If the exchange polynomials of a LP seed satisfy $\hat{F}_k=F_k$ for any $k\in [1,n]$, then $F_i\neq F_k$ for any $i\neq k$. Furthermore, any two of the exchange polynomials $\{F_k|k\in [1,n]\}$of a LP seed are coprime.
\end{proposition}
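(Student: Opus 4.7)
The plan is to extract both assertions from a single computation once one understands the precise content of the hypothesis $\hat{F}_k = F_k$. For fixed $j$, the exchange Laurent polynomial is given by $\hat{F}_j = F_j / \prod_{k \neq j} x_k^{a_k}$ with $a_k$ maximal such that $F_k^{a_k}$ divides $F_j|_{x_k \leftarrow F_k/x_k'}$ in $R[x_1,\ldots,x_{k-1},(x_k')^{-1},x_{k+1},\ldots,x_n]$. Because each $F_j$ is a polynomial with $x_k \nmid F_j$, the equality $\hat{F}_j = F_j$ of Laurent polynomials forces $\prod_{k \neq j} x_k^{a_k} = 1$, equivalently $a_k = 0$ for every $k \neq j$. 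Thus the hypothesis translates into the statement: \emph{for all $i \neq k$, $F_k$ does not divide $F_i|_{x_k \leftarrow F_k/x_k'}$.}

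Using this translation, I would prove the coprimality statement directly, since $F_i \neq F_k$ follows as a special case (or, more precisely, from the non-associate consequence of coprimality). Because $R$ is a UFD, so is $R[x_1,\ldots,x_n]$, and two irreducible elements of this ring are coprime iff they are not associates, i.e., do not differ by a unit of $R$. Suppose for contradiction that $F_i = u F_k$ for some unit $u \in R$ and some $i \neq k$. Since $F_k$ does not depend on $x_k$ by the LP-seed axiom, neither does $uF_k = F_i$, and so the substitution is trivial:
$$F_i|_{x_k \leftarrow F_k/x_k'} = u F_k.$$
But $F_k$ divides $uF_k$ (with quotient $u \in R$), hence the corresponding exponent $a_k$ in the denominator of $\hat{F}_i$ would be at least $1$, contradicting the translated hypothesis. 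This rules out associateness and gives coprimality; specializing $u = 1$ recovers the first assertion $F_i \neq F_k$.

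The proof is short, and the only subtle point is really the first step: one must argue that the bare equality $\hat{F}_i = F_i$ is strong enough to kill \emph{every} exponent $a_k$, not merely to collapse their product. This rests on the LP-seed axiom that no cluster variable divides any exchange polynomial, so the Laurent monomial $\prod_{k \neq j} x_k^{a_k}$ cannot be absorbed into $F_i$. Once that is clear, the rest is a one-line application of the triviality that substituting into a polynomial which does not depend on the substitution variable changes nothing.
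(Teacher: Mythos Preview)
Your proof is correct and follows essentially the same route as the paper: both arguments reduce to the observation that if $F_i$ equals (or is associate to) $F_k$, then $x_k \notin F_i$ makes the substitution $x_k \leftarrow F_k/x_k'$ trivial, so $F_k$ divides $F_i|_{x_k \leftarrow F_k/x_k'}$, contradicting the vanishing of the exponent $a_k$ that encodes the hypothesis $\hat F_i = F_i$. Your version is in fact slightly more careful than the paper's, which literally only rules out $F_i = F_k$ and then invokes irreducibility for coprimality; you correctly handle the case $F_i = uF_k$ for a unit $u$, which is what coprimality of irreducibles actually requires.
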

\begin{proof}
We will prove by contradiction. If $F_i=F_k$, then $$\hat{F}_i|_{x_k\leftarrow F_k/x_k'}=F_i|_{x_k\leftarrow F_k/x_k'}=F_k|_{x_k\leftarrow F_k/x_k'}=F_k$$ for $x_k\notin F_k$, implying that $F_k$ divides $\hat{F}_i|_{x_k\leftarrow F_k/x_k'}$, which contradicts the definition of exchange Laurent polynomials.

Besides, since the irreducibility of exchange polynomials for LP seeds, we conclude that the exchange polynomials of a LP seed are coprime under the condition that $\hat{F}_k=F_k$ for any $k\in [1,n]$.
\end{proof}

\begin{remark}
  When a cluster seed is a LP seed, the coprimeness of the cluster seed is equivalent to the condition that $\hat{F}_k=F_k$ for any $k\in [1,n]$.
  \label{rmk-cor}
\end{remark}

\begin{example}
Consider the LP seed $(\mathbf{x,F})=\{(a,b+c),(b,a+c),(c,a+(a+1)b)\}$ over $R=\mathbb{Z}$, which satisfies the condition that $\hat{F}_k=F_k$ for any $k\in \{a,b,c\}$.
Then the LP seed obtained by mutation at $b$ is $$(\mathbf{x',F'})=\{(a,1+d),(d,a+c),(c,a+d+1)\}$$ where $d=\frac{a+C}{b}$.
It is easy to see that $\hat{F_c'}=\frac{F_c'}{a}$, meaning that the condition that $\hat{F}_k=F_k$ for any $k$ for a LP seed may not hold under mutations.
\end{example}

\begin{definition}
Let $\Sigma=(\mathbf{x,F})$ be a LP seed, the \textbf{upper LP algebra} $\overline{\mathcal{A}}(\Sigma)$ defined by $\Sigma$  is the intersection of the subalgebras $\mathcal{U}(\Sigma')$ for all LP seeds $\Sigma'$ mutation-equivalent to $\Sigma$ .
\end{definition}

Theorem \ref{thma} has the following direct implication.
\begin{corollary}
Assume that all LP seeds mutation equivalent to a LP seed $\Sigma=(\mathbf{x,F})$ satisfy the condition that $\hat{F}_j=F_j$ for $j\in[1,n]$, then the upper bound $\mathcal{U}(\Sigma)$ is independent of the choice of LP seeds mutation-equivalent to $\Sigma$ , and so is equal to the upper LP algebra $\overline{\mathcal{A}}(\Sigma)$.
\end{corollary}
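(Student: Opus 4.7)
The plan is to extract this directly from Theorem \ref{thma} by a straightforward induction on the length of mutation sequences. The hypothesis is designed to cancel exactly the obstruction in Theorem \ref{thma}: the theorem requires $\hat{F}_j = F_j$ to hold in the seed being mutated, and we are assuming this equality holds throughout the entire mutation class, so the one-step invariance is available at every stage.

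First I would unpack the definition of $\overline{\mathcal{A}}(\Sigma)$, so that the task reduces to showing $\mathcal{U}(\Sigma) = \mathcal{U}(\Sigma')$ for every LP seed $\Sigma'$ mutation-equivalent to $\Sigma$. Fix such a $\Sigma'$ and a finite sequence of directions $k_1, k_2, \dots, k_m$ with
\[
\Sigma' = \mu_{k_m} \cdots \mu_{k_2} \mu_{k_1}(\Sigma).
\]
I would then argue by induction on $m$. The base case $m = 0$ is trivial. For the inductive step, set $\Sigma_i = \mu_{k_i} \cdots \mu_{k_1}(\Sigma)$ so that $\Sigma_{i+1} = \mu_{k_{i+1}}(\Sigma_i)$. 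By hypothesis $\Sigma_i$ belongs to the mutation class of $\Sigma$, hence its exchange polynomials satisfy $\hat{F}_j = F_j$ for every $j \in [1,n]$; Theorem \ref{thma} therefore applies and gives $\mathcal{U}(\Sigma_i) = \mathcal{U}(\Sigma_{i+1})$. Chaining these equalities yields $\mathcal{U}(\Sigma) = \mathcal{U}(\Sigma')$.

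Once this invariance is established, every factor in the intersection $\overline{\mathcal{A}}(\Sigma) = \bigcap_{\Sigma'} \mathcal{U}(\Sigma')$ coincides with $\mathcal{U}(\Sigma)$, so the intersection collapses and $\overline{\mathcal{A}}(\Sigma) = \mathcal{U}(\Sigma)$, which is the second assertion. I do not anticipate any genuine obstacle here: the only subtlety is verifying that the hypothesis propagates to every intermediate seed $\Sigma_i$, but this is guaranteed verbatim by the assumption that \emph{all} seeds mutation-equivalent to $\Sigma$ satisfy $\hat{F}_j = F_j$. So this really is a one-line consequence of Theorem \ref{thma} once the iteration is written out.
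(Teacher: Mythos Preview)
Your proposal is correct and matches the paper's approach exactly: the paper treats this corollary as a direct implication of Theorem \ref{thma} without writing out any details, and the induction on mutation length you supply is precisely the routine unpacking of that implication.
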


\subsection{On lower bound}
\subsubsection{A basis for lower bound}

\begin{definition}
Let $(\mathbf{x,F})$ be a LP seed. A \textbf{standard monomial} in $\{x_i,x_i'|i \in [1,n]\}$ is a monomial that contains no product of the form $x_ix_i'$.
\end{definition}

Let $\mathbf{x^{a}}=x_1^{a_1}\dots x_n^{a_n}$ be a Laurent monomial where $\mathbf{a}\in \mathbb{Z}^n$. For a Laurent polynomial in $x_1,\dots,x_n$, we order the each two terms $\mathbf{x^{a}}$ and $\mathbf{x^{a'}}$ lexicographically as follows:
\begin{equation}\label{lexicographically}
\mathbf{a}\prec\mathbf{a'}\ \text{if the first nonzero difference}\ a_j'-a_j\text{ is positive}.
\end{equation}
We set the term with the smallest lexicographical order as the first term in a Laurent polynomial.
%Then when there are some terms $\mathbf{x^{v_j^i}}=x_{j+1}^{v_{j+1,j}^i}\cdots x_{n}^{v_{n,j}^i}$ in $F_j$ for $j\in[1,n-1]$ where $i$ belongs to a certain index set $I$ and $\mathbf{v_j^i}\in \mathbb{Z}^{n-j}_{\geq0}$, we set $\mathbf{x^{v_j}}$ as the one with the smallest lexicographical order, that is, $\mathbf{v_j}=min\{\mathbf{v_j^i}|i\in I\}$.

\begin{theorem} \label{wuguan}
Assume that a LP seeds $\Sigma=(\mathbf{x,F})$ satisfies
\begin{enumerate}
  \item $\hat{F}_j=F_j$ for $j\in[1,n]$.
  \item in any $F_j$, the lexicographically first monomial is of the form $$\mathbf{x^{v_j}}=
  \begin{cases}
  x_{j+1}^{v_{j+1,j}}\cdots x_{n}^{v_{n,j}} & j\in[1,n-1] \\
  1 & j=n \end{cases}$$
   where $\mathbf{v_j}\in \mathbb{Z}_{\geq0}^{n-j}$ for $j\in[1,n-1]$.
\end{enumerate}
 Then the standard monomials in $x_1,x_1',\dots,x_n,x_n'$ form an $R$-basis for $\mathcal{L}(\Sigma)$.
 %are linearly independent in $\mathcal{L}(\Sigma)$ over $R$
\end{theorem}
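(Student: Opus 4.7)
To prove Theorem \ref{wuguan} I plan to split the statement into two halves, (a) the standard monomials span $\mathcal{L}(\Sigma)$ as an $R$-module, and (b) they are $R$-linearly independent, and handle them separately.

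For the spanning half, I would use the exchange relations $x_i x_i'=\hat{F}_i=F_i$ (available by hypothesis (1)) to reduce any monomial $m=\prod_i x_i^{a_i}(x_i')^{b_i}$ in the generators to an $R$-combination of standard monomials. The induction is on the weight $\sum_i b_i$: if $m$ is already standard we are done; otherwise pick an index $i$ with $a_i,b_i\geq 1$ and substitute a single factor $x_i x_i'\mapsto F_i$. Because $F_i\in R[x_1,\dots,\hat x_i,\dots,x_n]$ contains no primed generators, each resulting monomial has its $b_j$ for $j\neq i$ unchanged and its $b_i$ strictly decreased, so the weight drops and the induction closes.

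For the linear independence half, to each standard monomial $M=\prod_i x_i^{a_i}(x_i')^{b_i}$ I would associate its Laurent expansion $M=\prod_i x_i^{a_i-b_i}F_i^{b_i}$ in $R[x_1^{\pm 1},\dots,x_n^{\pm 1}]$ and record the exponent $\phi(M)\in\mathbb Z^n$ of its lex-first monomial. Since the lex order of (\ref{lexicographically}) is compatible with monomial multiplication and the lex-first term of $F_j$ is $\mathbf{x^{v_j}}$ (with nonzero coefficient $\lambda_j\in R$) by hypothesis (2), the lex-first term of $M$ has exponent
\[
\phi(M)_i=a_i-b_i+\sum_{j<i}v_{i,j}\,b_j
\]
and coefficient $\lambda_M=\prod_i\lambda_i^{b_i}$, which is nonzero since $R$ is an integral domain.

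The crux is the injectivity of $\phi$ on standard monomials: I would argue by forward induction on $i$, using that $\min(a_i,b_i)=0$ together with the recovered value $a_i-b_i=\phi(M)_i-\sum_{j<i}v_{i,j}b_j$ determines $(a_i,b_i)$ once $b_1,\dots,b_{i-1}$ are known. Given any purported nontrivial relation $\sum_M c_M M=0$, I would then pick $M_0$ realising the lex-minimum of $\{\phi(M):c_M\neq 0\}$; every Laurent monomial appearing in $c_M M$ has exponent lex-$\succeq\phi(M)\succ\phi(M_0)$ for $M\neq M_0$, so the unique contribution to the $\mathbf{x^{\phi(M_0)}}$-coefficient of the relation is $c_{M_0}\lambda_{M_0}$, forcing $c_{M_0}=0$ and yielding a contradiction. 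I expect the main obstacle to be precisely this injectivity step: it is where the triangular shape built into condition (ii) (namely that $\mathbf{v_j}$ is supported strictly past position $j$) is essential, since without that triangularity one could not disentangle $(a_i,b_i)$ index by index, and nontrivial cancellations among distinct standard monomials at the lex-first level could in principle arise. The spanning part and the lex manipulations used to compute $\phi$ are, by contrast, routine.
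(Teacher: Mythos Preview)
Your proposal is correct and follows essentially the same strategy as the paper. Both arguments reduce spanning to the relation $x_ix_i'=F_i$ and establish linear independence by showing that distinct standard monomials have distinct lex-first Laurent monomials; the paper packages this as the order-preserving statement (\ref{lex1st}), whereas you extract injectivity of $\phi$ via the triangular recursion $a_i-b_i=\phi(M)_i-\sum_{j<i}v_{i,j}b_j$, but the content and the concluding leading-term argument are the same.
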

\begin{proof}
The proof is using the same technique as in \cite{CA3}. We denote the standard monomials in $x_1,x_1',\dots,x_n,x_n'$ by $\mathbf{x^{(a)}}=x_1^{(a_1)}\cdots x_n^{(a_n)}$, where $\mathbf{a}=(a_1,\dots,a_n)\in \mathbb{Z}^n$ and
$$x_i^{(a_i)}=\begin{cases} x_i^{a_i}, & a_i\geq0 \\x_i'^{-a_i}, & a_i<0.\end{cases}$$
Note that $\mathbf{x^{(a)}}$ is a Laurent polynomial in $x_1,\dots,x_n$ and for any $i$, we have $x_i^{(-1)}=x_i'=x_i^{-1}\hat{F}_i=x_i^{-1}F_i$.
By the assumption for $F_i$, it follows that the lexicographically first monomial in $x_i^{(-1)}$ is $x_i^{-1}\mathbf{x^{v_i}}$, then the power of $x_i$ in $x_i^{(a_i)}$ is $a_i$ and there is no $x_1,\dots,x_{i-1}$ in $x_i^{(a_i)}$ .
Then the lexicographically first monomial in $\mathbf{x^{(a)}}$ is the product of $x_i^{a_i}(a_i>0)$ and $x_i^{a_i}(\mathbf{x^{v_i}})^{-a_i}(a_i<0)$.

We assume that $\mathbf{a}\prec\mathbf{a'}$ such that $a_i=a_i'$ for $i\in [1,k-1]$ and $a_k<a_k'$.
Let $P,\ M$ and $Q$ be the lexicographically first monomial of $\prod\limits_{j=1}^{k-1}x_j^{(a_j)}$, $x_k^{(a_k)}$ and $\prod\limits_{j=k+1}^{n}x_j^{(a_j)}$ respectively. Then the lexicographically first monomial of $\mathbf{x^{(a)}}$ is $PMQ$, similarly that of $\mathbf{x^{(a')}}$ is $P'M'Q'$.

Since $a_i=a_i'$ for $i\in [1,k-1]$, we have $P=P'=\prod\limits_{j=1}^{n}x_j^{p_j}$.

Since $a_k<a_k'$ and the power of $x_k$ in $x_k^{(a_k)}$ is $a_k$ and there is no $x_{i}\ (i\in [1,k-1])$ in $x_k^{(a_k)}$, we obtain $M=x_k^{a_k}\prod\limits_{j=k+1}^{n}x_j^{m_j}$ and $M'=x_k^{a_k'}\prod\limits_{j=k+1}^{n}x_j^{m_j'}$.

And $Q=(\prod\limits_{j=k+1,a_j>0}^{n}x_j^{a_j})(\prod\limits_{j=k+1,a_j<0}^{n}x_j^{a_j}(\mathbf{x^{v_j}})^{-a_j})=\prod\limits_{j=k+1}^{n}x_j^{q_j}$, similarly $Q'=\prod\limits_{j=k+1}^{n}x_j^{q_j'}$.

It follows that $$PMQ=(\prod\limits_{j=1}^{k-1}x_j^{p_j})(x_k^{p_k+a_k})(\prod\limits_{j=k+1}^{n}x_j^{p_j+m_j+q_j}),\ P'M'Q'=(\prod\limits_{j=1}^{k-1}x_j^{p_j})(x_k^{p_k+a_k'})(\prod\limits_{j=k+1}^{n}x_j^{p_j+m_j'+q_j'}).$$

Thus $PMQ\prec P'M'Q'$, implying that
\begin{equation}\label{lex1st}
\text{if }\mathbf{a}\prec\mathbf{a'}\text{, the lexicographically first monomial of }\mathbf{x^{(a)}}\prec \text{that of } \mathbf{x^{(a')}}.
\end{equation}
The linearly independence of standard monomials over $R$ follows at once from (\ref{lex1st}).
Since the product $x_ix_i'$ for any $i$ equals to $\hat{F_i}=F_i$, which is the linear combination of standard monomials in $x_1,x_1',\dots,x_n,x_n'$.
Thus they form a basis for $\mathcal{L}(\Sigma)$ .
\end{proof}

%\begin{remark}
%We have not proved the converse of the above theorem, since the exchange polynomial of a LP algebra are multinomial and $x_i\in F_j$ while $x_j\notin F_i$ for certain $i$ and $j$.
%\end{remark}

\subsubsection{Lower and upper bound}\quad

In the following statements, we always assume that $\Sigma=(\mathbf{x,F})$ is a LP seed of rank $n$ satisfying Condition \ref{condition}.

\begin{Notation}
We denote by $\bm{\varphi}:R[x_2,x_2',\dots,x_n,x_n']\rightarrow R[x_2^{\pm1},\dots,x_n^{\pm1}]$ the algebra homomorphism defined as the composition $\varphi_2\circ\varphi_1$, where
\begin{align}%\label{}
  \varphi_1 & : R[x_2,x_2',\dots,x_n,x_n']\rightarrow R[x_1,x_2^{\pm1},\dots,x_n^{\pm1}]\text{ by }x_i\mapsto x_i\text{ and }x_i'\mapsto F_i/x_i.\nonumber\\
  \varphi_2 & : R[x_1,x_2^{\pm1},\dots,x_n^{\pm1}]\rightarrow R[x_2^{\pm1},\dots,x_n^{\pm1}]\text{ by }x_1\mapsto 0\text{ and }x_i^{\pm1}\mapsto x_i^{\pm1}.\nonumber
\end{align}
\end{Notation}

We denote by $R^{st}[x_2,x_2',\dots,x_n,x_n']$ (resp. $R^{st}[x_1,x_2,x_2',\dots,x_n,x_n']$) the $R$-linear span (resp. $R[x_1]$-linear span) of the standard monomials in $x_2,x_2',\dots,x_n,x_n'$.
\begin{lemma}
$R[x_2,x_2',\dots,x_n,x_n']=ker(\varphi)\oplus R^{st}[x_2,x_2',\dots,x_n,x_n']$.
\end{lemma}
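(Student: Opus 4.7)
My plan is to establish the two defining properties of an internal direct sum: that $R[x_2, x_2', \ldots, x_n, x_n'] = \ker(\varphi) + R^{st}[x_2, x_2', \ldots, x_n, x_n']$ and that $\ker(\varphi) \cap R^{st}[x_2, x_2', \ldots, x_n, x_n'] = \{0\}$.

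For the sum, the crucial identity is the congruence $x_i x_i' \equiv \bar{F}_i \pmod{\ker \varphi}$ for each $i \in [2, n]$, where $\bar{F}_i := F_i|_{x_1 \to 0}$. Indeed, under Condition \ref{condition}(i) we have $x_i x_i' = \hat{F}_i = F_i$ in $\mathcal{F}$, so $\varphi_1$ is simply the inclusion $R[x_2, x_2', \ldots, x_n, x_n'] \hookrightarrow R[x_1, x_2^{\pm 1}, \ldots, x_n^{\pm 1}]$ sending $x_i' \mapsto F_i/x_i$. Thus $\varphi(x_i x_i' - \bar{F}_i) = \varphi_2(F_i - \bar{F}_i) = 0$ directly from the definition of $\bar{F}_i$, and the element $x_i x_i' - \bar{F}_i$ lies in $R[x_2, x_2', \ldots, x_n, x_n']$ because $x_i x_i'$ does and $\bar{F}_i \in R[x_2, \ldots, \hat{x}_i, \ldots, x_n]$ does. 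I would then reduce an arbitrary monomial $m = \prod_i x_i^{p_i} x_i'^{q_i}$ to a standard-monomial sum modulo $\ker \varphi$ by induction on $\nu(m) := \sum_i q_i$: if some $p_j, q_j > 0$ then $m = (x_j x_j') \tilde m \equiv \bar{F}_j \tilde m \pmod{\ker \varphi}$, and because $\bar{F}_j$ contains no primed generators each term of $\bar{F}_j \tilde m$ has primed degree $\nu(m) - 1$. The base case $\nu(m) = 0$ consists of purely unprimed monomials, which are already standard, so linearity closes the argument.

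For the trivial intersection, suppose $y = \sum_\mathbf{a} c_\mathbf{a} \mathbf{x}^{(\mathbf{a})}$ is a finite $R$-linear combination of standard monomials (indexed by $\mathbf{a} \in \mathbb{Z}^{n-1}$) with $\varphi(y) = 0$. One has $\varphi(\mathbf{x}^{(\mathbf{a})}) = \mathbf{x}^{\mathbf{a}} \prod_{a_i < 0} \bar{F}_i^{-a_i}$. Because Condition \ref{condition}(ii) guarantees that $M_i$ involves only $x_{i+1}, \ldots, x_n$, in particular not $x_1$, the lex-first monomial of $\bar{F}_i$ coincides with $M_i$. Replaying the lex-first-monomial argument from the proof of Theorem \ref{wuguan} verbatim (with $\bar{F}_i$ replacing $F_i$) shows that distinct standard $\mathbf{a}$'s produce distinct lex-first monomials in $\varphi(\mathbf{x}^{(\mathbf{a})})$, so the images are $R$-linearly independent in $R[x_2^{\pm 1}, \ldots, x_n^{\pm 1}]$ and every $c_\mathbf{a} = 0$.

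The main obstacle is arranging the reduction in the first step so that it actually terminates: substituting $\bar{F}_j$ for $x_j x_j'$ can inflate unprimed exponents in $\tilde m$ through the various monomials of $\bar{F}_j$, so a naive bidegree on the pairs $(p_i, q_i)$ is not monotone. The rescue is the observation that $\bar{F}_j$ contains no primed variables at all, which makes the total primed degree $\nu(m) = \sum_i q_i$ strictly decrease in each resulting summand and furnishes the induction parameter. A secondary bookkeeping point is to verify that all manipulations stay inside the subring $R[x_2, x_2', \ldots, x_n, x_n']$ rather than merely inside the ambient field $\mathcal{F}$, which is immediate from the explicit formulas for $x_i x_i'$ and $\bar{F}_i$.
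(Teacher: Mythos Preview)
Your proof is correct and follows essentially the same two-step strategy as the paper: reduce to standard form using the relation for $x_ix_i'$, then show injectivity of $\varphi$ on the standard span via the lexicographically-first-monomial argument from Theorem~\ref{wuguan}.

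The one noteworthy variation is in the sum step. The paper replaces $x_ix_i'$ by the full $F_i$ (an exact equality in $\mathcal{F}$), which produces an element of $R^{st}[x_1,x_2,x_2',\dots,x_n,x_n']$ and then infers the decomposition from that inclusion. You instead replace $x_ix_i'$ by $\bar F_i=F_i|_{x_1\to 0}$ and work modulo $\ker\varphi$ from the outset; since $\bar F_i$ contains neither $x_1$ nor any primed variable, your induction on the total primed degree $\nu(m)$ lands directly in $R^{st}[x_2,x_2',\dots,x_n,x_n']$ without ever having $x_1$ appear. This is a clean and arguably more transparent organization of the same idea. One small wording point: your stated base case ``$\nu(m)=0$'' should really be ``$m$ is standard'', since a monomial with $\nu(m)>0$ may already be standard (e.g.\ $x_2'^3$); the inductive logic you wrote handles this correctly because you only invoke the reduction when some $p_j,q_j>0$, but the base case description is slightly imprecise.
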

\begin{proof}
For any $y\in R[x_2,x_2',\dots,x_n,x_n']$, replace $x_ix_i'\in y$ with $F_i$, then $y\in R^{st}[x_1,x_2,x_2',\dots,x_n,x_n']$. Thus we have $R[x_2,x_2',\dots,x_n,x_n']\subseteq R^{st}[x_1,x_2,x_2',\dots,x_n,x_n']$. It follows that
$$R[x_2,x_2',\dots,x_n,x_n']=ker(\varphi)+ R^{st}[x_2,x_2',\dots,x_n,x_n'].$$
Similarly using the tool of the proof of Theorem \ref{wuguan}, For $\mathbf{x^{(a)}}\in R^{st}[x_2,x_2',\dots,x_n,x_n']$,
the lexicographically first monomial of $\varphi(x_j^{(a_j)})$ is a Laurent monomial in $x_j,x_{j+1},\dots,x_n$ whose the power of $x_j$ is $a_j$, implying that if $\mathbf{a}\prec\mathbf{a'}$,
then the lexicographically first monomial of $\varphi(\mathbf{x^{(a)}})$ precedes the one of $\varphi(\mathbf{x^{(a')}})$.

Then the restriction of $\varphi$ to $R^{st}[x_2,x_2',\dots,x_n,x_n']$ is injective.
\end{proof}

\begin{Notation}
Given a Laurent polynomial $y\in R[x_1^{\pm1},\dots,x_n^{\pm1}]$, we denote by $LT(y)$ as the sum of all Laurent monomials with the smallest power of $x_1$ in the Laurent expansion of $y$ with nonzero coefficient.
\end{Notation}

The following results parallel to Lemma 6.4 and 6.5 in \cite{CA3} can be obtained similarly.
\begin{lemma}\label{LTy}
Suppose that $y=\sum_{m=a}^{b}c_mx_1^m$ where $c_m\in R^{st}[x_2,x_2',\dots,x_n,x_n']$ and $c_a\neq0$, then $LT(y)=\varphi(c_a)x_1^a$.
\end{lemma}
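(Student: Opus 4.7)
The plan is to establish the lemma by tracing through the definition of $\varphi = \varphi_2 \circ \varphi_1$ and showing that the substitution $x_i' \mapsto F_i/x_i$ only introduces monomials with non-negative powers of $x_1$. Once that is in hand, the lowest-$x_1$-power contribution to the Laurent expansion of $y$ comes solely from the $m=a$ term, and within that, from the $x_1$-free part, which is exactly $\varphi(c_a)\,x_1^a$.

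First I would record two consequences of Condition \ref{condition}: by (i), $x_i' = \hat{F}_i/x_i = F_i/x_i$, and by (ii), the lexicographically first monomial of $F_i$ is $\mathbf{x}^{\mathbf{v}_i}$, which involves no $x_1$. Combined with the fact that $F_i$ is a polynomial and that $F_1$ does not depend on $x_1$ at all, this yields the key observation: every monomial appearing in $F_i$ has non-negative $x_1$-exponent for every $i \in [1,n]$ (and $F_i$ even has a monomial with $x_1$-exponent zero, though we will only need the one-sided bound).

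Next, I would deduce that for any standard monomial in $x_2, x_2', \dots, x_n, x_n'$, its image under $\varphi_1$ is a Laurent polynomial in $x_1, \dots, x_n$ whose $x_1$-exponents are all non-negative, because each $x_i'$ is replaced by $F_i/x_i$ (with $F_i$ having non-negative $x_1$-exponents and $x_i$ being independent of $x_1$) while each $x_i$ is unchanged. Hence $\varphi_1(c_m)\,x_1^m$ lies in the $R$-span of monomials whose $x_1$-exponent is at least $m$, and the portion of $\varphi_1(c_m)\,x_1^m$ with $x_1$-exponent exactly $m$ is precisely $\varphi_2(\varphi_1(c_m))\cdot x_1^m = \varphi(c_m)\,x_1^m$.

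Summing $y = \sum_{m=a}^{b} \varphi_1(c_m)\,x_1^m$, for every $m>a$ all monomials carry $x_1$-power strictly greater than $a$, so the entire contribution to the minimum-$x_1$-power portion comes from $m=a$, where it equals $\varphi(c_a)\,x_1^a$. By the preceding lemma, the restriction of $\varphi$ to $R^{st}[x_2, x_2', \dots, x_n, x_n']$ is injective, so $c_a \neq 0$ forces $\varphi(c_a) \neq 0$, and we conclude $LT(y) = \varphi(c_a)\,x_1^a$. The only real point of care is verifying that non-negative $x_1$-exponents are preserved by $\varphi_1$; once that is pinned down, the rest is bookkeeping, and non-vanishing of $\varphi(c_a)$ is exactly the injectivity supplied by the previous lemma.
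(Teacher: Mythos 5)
Your proof is correct and is essentially the argument the paper leaves implicit by deferring to Lemma 6.4 of \cite{CA3}: under Condition \ref{condition}(i) each $x_i'$ becomes $F_i/x_i$ with only non-negative $x_1$-exponents, so $\varphi_1(c_m)x_1^m$ contributes $x_1$-powers at least $m$ with the exact-$m$ part being $\varphi(c_m)x_1^m$, and the injectivity of $\varphi$ on $R^{st}[x_2,x_2',\dots,x_n,x_n']$ from the preceding lemma guarantees $\varphi(c_a)\neq 0$. Nothing is missing.
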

\begin{lemma}\label{6.5}
$R[x_1,x_2^{\pm1},\dots,x_n^{\pm1}]\cap R[x_1^{\pm1},x_2,x_2',\dots,x_n,x_n']=R[x_1,x_2,x_2',\dots,x_n,x_n']$.
\end{lemma}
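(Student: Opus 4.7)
The inclusion $\supseteq$ is immediate, since both subalgebras on the left contain $R[x_1,x_2,x_2',\dots,x_n,x_n']$. For the reverse inclusion, my plan is to put an arbitrary $y$ in the intersection into the canonical form $y=\sum_m x_1^m c_m$ with $c_m\in R^{st}[x_2,x_2',\dots,x_n,x_n']$, and then combine Lemma \ref{LTy} with the injectivity of $\varphi$ on $R^{st}[x_2,x_2',\dots,x_n,x_n']$ to conclude that only indices $m\geq 0$ occur.

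To establish this canonical form for every $y\in R[x_1^{\pm1},x_2,x_2',\dots,x_n,x_n']$, I would start from any representation as an $R$-linear combination of monomials in the generators $x_1^{\pm1},x_2,x_2',\dots,x_n,x_n'$, and iteratively apply the rewriting rule $x_ix_i'\mapsto F_i$ (for $i\geq 2$) to any monomial still containing such a pair. Because each $F_i$ is a polynomial in $x_1,\dots,\hat{x}_i,\dots,x_n$ and involves no primed variable, a single rewriting applied to a monomial $M$ decreases $\deg_{x_i'}(M)$ by one and leaves all other primed degrees unchanged; the total primed-degree $\mu(M):=\sum_{j\geq2}\deg_{x_j'}(M)$ is therefore a non-negative integer which strictly decreases under each step, guaranteeing termination. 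The resulting standard monomials in $x_2,x_2',\dots,x_n,x_n'$ carry no $x_1$, since every $x_1$-power introduced by an $F_i$ substitution is absorbed into the $x_1^m$ prefactor before collecting terms. Grouping by powers of $x_1$ yields the desired expression with $c_m\in R^{st}[x_2,x_2',\dots,x_n,x_n']$.

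Now let $a=\min\{m:c_m\neq0\}$. By Lemma \ref{LTy}, $LT(y)=\varphi(c_a)x_1^a$; since $\varphi$ is injective on $R^{st}[x_2,x_2',\dots,x_n,x_n']$ by the preceding lemma of this subsection, $\varphi(c_a)\neq 0$, so $LT(y)\neq 0$ and the lowest power of $x_1$ in the Laurent expansion of $y$ is exactly $a$. The hypothesis $y\in R[x_1,x_2^{\pm1},\dots,x_n^{\pm1}]$ then forces $a\geq 0$, whence $c_m=0$ for all $m<0$ and $y=\sum_{m\geq0}x_1^m c_m\in R[x_1,x_2,x_2',\dots,x_n,x_n']$, as required.

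I expect the main subtlety to be the reduction step: beyond checking termination via $\mu$, one must verify that each $c_m$ genuinely lies in $R^{st}[x_2,x_2',\dots,x_n,x_n']$ rather than in the larger ring $R^{st}[x_1,x_2,x_2',\dots,x_n,x_n']$. Both points follow from the observation that the rewriting $x_ix_i'\mapsto F_i$ affects only the $x_i$- and $x_i'$-exponents and feeds $x_1$ only into the prefactor, never into a standard factor. Once this bookkeeping is in place, Lemma \ref{LTy} and the injectivity of $\varphi$ do the rest.
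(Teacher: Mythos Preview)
Your proof is correct and follows essentially the same approach as the paper, which defers to Lemma~6.5 of \cite{CA3}; there the argument is precisely to write $y=\sum_m c_m x_1^m$ with $c_m\in R^{st}[x_2,x_2',\dots,x_n,x_n']$, invoke Lemma~\ref{LTy} together with the injectivity of $\varphi$ on standard monomials to identify the minimal $x_1$-power as $a$, and conclude $a\geq 0$ from the hypothesis $y\in R[x_1,x_2^{\pm1},\dots,x_n^{\pm1}]$. Your explicit termination argument via the primed-degree statistic $\mu$ is a helpful detail that the source leaves implicit.
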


\begin{lemma}\label{6.6}
Im$(\varphi)=R[x_2,x_2^{(-)},\dots,x_n,x_n^{(-)}]$, where $x_j^{(-)}=\begin{cases}x_j', & \mbox{if } x_1\notin F_j \\x_j^{-1}, & \mbox{otherwise}\end{cases}.$
\end{lemma}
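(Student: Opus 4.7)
The plan is to prove $\operatorname{Im}(\varphi) = R[x_2, x_2^{(-)}, \dots, x_n, x_n^{(-)}]$ by separately verifying the two inclusions. The forward inclusion is a direct generator-by-generator computation. Clearly $\varphi(x_j) = x_j$ lies in the target, and for $\varphi(x_j') = F_j|_{x_1 \leftarrow 0}/x_j$ I would split into cases: if $x_1 \notin F_j$, then $\varphi(x_j') = F_j/x_j = x_j' = x_j^{(-)}$; if $x_1 \in F_j$, Condition \ref{condition}(iii) gives $F_j = M_j + f_j(x_1)$, so $\varphi(x_j') = M_j/x_j = M_j \cdot x_j^{(-)}$, which lies in the target since $M_j$ is a monomial in $x_{j+1}, \dots, x_n$ and $x_j^{(-)} = x_j^{-1}$ in this case.

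For the reverse inclusion, I would show $x_j^{(-)} \in \operatorname{Im}(\varphi)$ by reverse induction on $j = n, n-1, \dots, 2$. The base case $j = n$ is immediate because $M_n = 1$ forces $\varphi(x_n') = 1/x_n$ when $x_1 \in F_n$ and $\varphi(x_n') = F_n/x_n = x_n'$ otherwise. In the inductive step, the case $x_1 \notin F_j$ is trivial ($x_j^{(-)} = x_j' = \varphi(x_j')$), so the real content is the case $x_1 \in F_j$, where $x_j^{(-)} = x_j^{-1}$ must be produced inside $\operatorname{Im}(\varphi)$.

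For this central case, let $S_j = \{l > j : v_{l,j} > 0\}$ so that $M_j = \prod_{l \in S_j} x_l^{v_{l,j}}$, and form
\[
Q := \varphi(x_j') \cdot \prod_{l \in S_j} \varphi(x_l')^{v_{l,j}} \;\in\; \operatorname{Im}(\varphi).
\]
For each $l \in S_j$ with $x_1 \notin F_l$ (necessarily $l \geq 3$), Condition \ref{condition}(iv) applies with the valid choice $i = j$, yielding $F_l|_{x_1 \leftarrow 0} = M_l + f_l(x_j)$; for $l \in S_j$ with $x_1 \in F_l$, we have $F_l|_{x_1 \leftarrow 0} = M_l$. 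Substituting and telescoping the factor $M_j$,
\[
Q \;=\; \frac{1}{x_j} \prod_{l \in S_j}\bigl(F_l|_{x_1 \leftarrow 0}\bigr)^{v_{l,j}} \;=\; x_j^{-1} \prod_{l \in S_j} M_l^{v_{l,j}} \;+\; H,
\]
where $H$ collects all contributions from non-leading terms in the expansions of $\bigl(M_l + f_l(x_j)\bigr)^{v_{l,j}}$. Each such contribution contains at least one factor $f_l(x_j)$, which has no constant term in $x_j$ and is therefore divisible by $x_j$; after dividing by $x_j$, the entire correction $H$ is a polynomial in $x_2, \dots, x_n$ and so lies in $\operatorname{Im}(\varphi)$. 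This yields $x_j^{-1} \prod_{l \in S_j} M_l^{v_{l,j}} \in \operatorname{Im}(\varphi)$.

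To isolate $x_j^{-1}$ itself, I would iterate this procedure on the baggage monomial $\prod_{l \in S_j} M_l^{v_{l,j}} = \prod_m x_m^{\gamma_m}$ (all indices $m > j$): for $m$ with $x_1 \in F_m$ multiply by the inductively available $x_m^{-1} \in \operatorname{Im}(\varphi)$, while for $m$ with $x_1 \notin F_m$ Condition \ref{condition}(iv) again supplies $F_m = M_m + f_m(x_{i_m})$ for some $i_m$ with $x_m \in M_{i_m}$, and multiplying by $\varphi(x_m')^{\gamma_m}$ enables another round of the same cancellation. Each round strictly raises the minimum index of the remaining baggage, and since $M_n = 1$ the process terminates in at most $n - j$ rounds. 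The main obstacle is a careful bookkeeping of the iterated correction terms, verifying at each stage that they can be absorbed back into $\operatorname{Im}(\varphi)$ by combining the "no constant term" property of the relevant $f_{(\cdot)}$'s with the inductive availability of $x_m^{-1}$ whenever $x_1 \in F_m$.
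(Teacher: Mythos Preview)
Your forward inclusion is correct and matches the paper exactly. The reverse inclusion, however, has a genuine gap precisely at the point you flag as the ``main obstacle.''

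The difficulty is that your iteration only controls the \emph{leading} baggage monomial. In round~2 and beyond, the correction terms are of the form $x_j^{-1}\cdot(\text{monomial in indices}>j)\cdot f_m(x_{i_m})$, where $i_m$ is an index with $x_m\in M_{i_m}$. Unlike round~1, here $i_m>j$ (it is one of the $l\in S_j$, or in later rounds an even larger index), so the factor $x_{i_m}$ coming from $f_m(x_{i_m})$ does \emph{not} cancel the $x_j^{-1}$. These corrections are therefore not polynomials; they are new elements of the form $x_j^{-1}\cdot(\text{monomial})$ that must themselves be shown to lie in $\operatorname{Im}(\varphi)$. For instance, with $M_j=x_{j+1}^2$, $F_{j+1}=x_{j+2}+x_j$, $F_{j+2}=1+x_{j+1}$, your round~2 correction produces $x_j^{-1}x_{j+1}$, which is neither a polynomial nor anything you have previously established. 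Handling it spawns its own iteration, and there is no complexity measure in your argument guaranteeing that this recursion is well-founded.

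The paper resolves exactly this by replacing your reverse-induction-plus-iteration with a single induction on the total degree of elements in the multiplicative monoid
\[
\mathcal{W}=\Bigl\{\,\mathbf{W^l}=\textstyle\prod_k (x_k^{-1}M_k)^{l_k}\;:\;l_k\ge 0,\ \text{and}\ \sum_{i<j}v_{ji}l_i\ge l_j\ \text{for }j\in J\,\Bigr\}.
\]
One first checks that $x_j^{-1}\in\mathcal{W}$ for each $j\notin J$, reducing the problem to $\mathcal{W}\subseteq\operatorname{Im}(\varphi)$. Then, for $W\in\mathcal{W}$, peeling off the factor $W_j$ with $j=\max\{i:l_i>0\}$ either gives $W_j=\varphi(x_j')$ directly (if $j\notin J$), or one uses $(W/W_j)x_j'=x_j^{-1}f_j(x_i)(W/W_j)+W$ and shows each correction $x_j^{-1}c_t(W/W_j)$ factors as $(W/W')\cdot P$ with $W'\in\mathcal{W}$ of strictly smaller degree and $P$ a genuine polynomial. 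The explicit construction of $W'$ (with $l_i'=1$ and $l_p'=\min\{l_p,\sum_{i\le q<p}v_{pq}l_q'\}$) is the technical heart that your sketch is missing; it is what converts the recursive mess of correction terms into a clean degree induction.
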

\begin{proof}
By  Condition \ref{condition}, we have $\varphi(x_j')=\begin{cases}x_j', & \mbox{if } x_1\notin F_j \\x_j^{-1}M_j, & \mbox{otherwise}\end{cases}.$
Thus the inclusion $\subseteq$ is clear.

Let $J$ be the set of indexes $j\in [2,n]$ satisfying $x_1\notin F_j$. We set $W_j=x_j^{-1}M_j$.
To prove the converse inclusion, it is enough to show that $x_j^{-1}\in$ Im($\varphi$) for $j\in [2,n]-J$.

For $\mathbf{m}=(m_2,\dots,m_n), \mathbf{l}=(l_2,\dots,l_n)\in \mathbb{Z}^{n-1}$, let $\mathbf{x^{m}}$ be a Laurent monomial in $R[x_2^{\pm1},\dots,x_n^{\pm1}]$. Moreover, we set $\mathbf{W^{l}}=\prod\limits_{j=2}^{n}W_j^{l_j}$.
Then we have any $\mathbf{x^{m}}$ can be written as $\mathbf{W^{l}}$ satisfying $$m_j=-l_j+\sum\limits_{2\leq i<j}v_{ji}l_i.$$

Define the multiplicative monoid $\mathcal{W}=\{\mathbf{x^{m}}=\mathbf{W^{l}} | l_i\geq0\ for\ i\in [2,n]\ and\ m_j\geq0\ for\ j\in J\}$.
Then $\mathbf{W^{l}}\in \mathcal{W}$ if and only if
\begin{equation}\label{W-equ}
(a)~l_k\geq0~\text{for}~k\in [2,n],~ \;\;(b)~ \sum\limits_{2\leq i<j}v_{ji}l_i\geq l_j\text{ for }j\in J.
\end{equation}

By the equivalence condition (\ref{W-equ}) of $\mathbf{W^{l}}\in \mathcal{W}$, we obtain $x_j^{-1}\in \mathcal{W}$ for $j\in [2,n]-J$, implying that it suffices to show that $\mathcal{W}\subseteq$ Im$(\varphi)$.

For $W=\mathbf{W^{l}}\in \mathcal{W}$, we prove that $W\in$ Im$(\varphi)$ by induction on the degree of $W$.
When $deg(W)=0$, we have $W=1\in R \subseteq$ Im$(\varphi)$.
Assume that $deg(W)>0$ and for any $W'\in \mathcal{W}$ such that $deg(W')<deg(W)$, then $ W'\in$ Im$(\varphi)$.

Let $j=max\{i|l_i>0\ in\ W\}$, then we have $W/W_j\in \mathcal{W}$ by the equivalence condition (\ref{W-equ}) of $\mathbf{W^{l}}\in \mathcal{W}$. As a consequence, $W/W_j\in$ Im$(\varphi)$ under the induction assumption.
If $j\in [2,n]-J$, then $W_j\in$ Im$(\varphi)$ so that $W=(W/W_j)W_j\in$ Im$(\varphi)$.

Otherwise, since $l_j>0$, there exist $i\in[2,j-1]$ such that $v_{ji}l_i>0$, where $v_{ji}\neq0$ implies that $x_j\in M_i$. Fix such an index $i$.
By (iv) of Condition \ref{condition}, $F_j=f_j(x_i)+M_j$ and $f_j(x_i)=\sum\limits_{t=1}^{s_j}r_tc_t$, where $s_j$ is the number of terms of $f_j(x_i)$, $r_t\in R$ and $c_t=\prod\limits_{p\in [2,n]-{j}}x_p^{\gamma_{pt}}$ satisfying $\gamma_{pt}\in \mathbb{Z}_{\geq0}$ and $\gamma_{it}\neq0$.

From the definition of LP mutations and Condition \ref{condition}(i)(ii), we have $x_j'=x_j^{-1}f_j(x_i)+W_j$.
By multiplying both sides of that equation by $W/W_j$, we have $$(W/W_j)x_j'=x_j^{-1}\sum\limits_{t=1}^{s_j}r_tc_t(W/W_j)+W.$$
Since $(W/W_j)x_j'\in$ Im$(\varphi)$, we only need to show that for $t\in [1,s_j]$, $x_j^{-1}c_t(W/W_j)\in$ Im$(\varphi)$.

Define $W'=W_{i}^{l_i'}\cdots W_{j}^{l_j'}$, where $l_i'=1$ and $l_p'=min\{l_p,\sum\limits_{i\leq q<p}v_{pq}l_q'\}$ for $p\in [i+1,j]$.
Because $W/W'=W_{2}^{l_2}\cdots W_{i-1}^{l_{i-1}}W_{i}^{l_i-1}W_{i+1}^{l_{i+1}-l_{i+1}'}\cdots W_{j}^{l_j-l_j'}$,
the equivalence condition (\ref{W-equ}) of $W/W'\in \mathcal{W}$ can be written as

(a) for $k\in[i,j]$, $l_k-l_k'\geq0$;

(b) for $k\in J$, $l_k-l_k'\leq\sum\limits_{2\leq h<k}v_{kh}(l_h-l_h')$ $\Leftrightarrow$ $-l_k'+\sum\limits_{2\leq h<k}v_{kh}l_h'\leq -l_k+\sum\limits_{2\leq h<k}v_{kh}l_h$.

The inequalities of (a) are immediate from the definition of $W'$ and the choice of $i$.
And for inequality (b), we discuss in several cases:
\begin{enumerate}
  \item if $k\in [2,i-1]$, (b) is equivalent to $0\leq -l_k+\sum\limits_{2\leq h<k}v_{kh}l_h$.
  \item if $k=i$, we have $\sum\limits_{2\leq h<i}v_{kh}l_h'=0$, (b) is equivalent to $-1\leq -l_k+\sum\limits_{2\leq h<k}v_{kh}l_h$.
  \item if $k\in [i+1,n]$, when $l_k'=l_k$, (b) is equivalent to $\sum\limits_{2\leq h<k}v_{kh}l_h'\leq\sum\limits_{2\leq h<k}v_{kh}l_h$, when $l_k'\leq l_k$, $l_k'=\sum\limits_{2\leq h<k}v_{kh}l_h'$, then LHS of (b) is zero.
\end{enumerate}
Since $W\in \mathcal{W}$ and inequalities of (a) hold, we have inequality (b) holds for $W/W'$.
Thus $W/W'$ belongs to $\mathcal{W}$ with $deg(W/W')<deg(W)$, so that $W/W'\in$ Im$(\varphi)$.

Then we have
\begin{align}%\label{}
  x_j^{-1}c_t(W/W_j) &= W\cdot \prod\nolimits_{p\in [2,n]-{j}}x_p^{\gamma_{pt}}/\mathbf{x^{v_j}}\nonumber\\
  &= (W/W')\cdot (W'\cdot (x_2^{\gamma_{2t}}\cdots x_{j-1}^{\gamma_{j-1,t}})\cdot(x_{j+1}^{\gamma_{j+1,t}-v_{j+1,j}}\cdots x_{n}^{\gamma_{nt}-v_{nj}})) \nonumber\\
  &=(W/W')\cdot P\nonumber
\end{align}
The claim $x_j^{-1}c_t(W/W_j)\in$ Im$(\varphi)$ is a consequence of the statement that $P\in R[x_2,\cdots,x_n]$.
Indeed, $R[x_2,\cdots,x_n]\subseteq$ Im$(\varphi)$.

The only variable with negative power (namely, -1) in $W'$ is $x_i$, since
\begin{align}%\label{}
  W' &= W_{i}^{1}W_{i+1}^{l_{i+1}'}\cdots W_{j}^{l_j'} \nonumber\\
  &= x_i^{-1}\cdot (x_{i+1}^{v_{i+1,i}-l_{i+1}'}\cdot x_{i+2}^{(\sum\limits_{i\leq h<i+2}v_{i+2,h}l_h')-l_{i+2}'}\cdots x_{j}^{(\sum\limits_{i\leq h<j} v_{j,h}l_h')-l_{j}'})\cdot(x_{j+1}^{\sum\limits_{i\leq h\leq j}v_{j+1,h}l_h'}\cdots x_{n}^{\sum\limits_{i\leq h\leq j}v_{nh}l_h'})\nonumber\\
  &= x_i^{-1}\cdot Q \cdot (x_{j+1}^{\delta_{j+1,t}}\cdots x_{n}^{\delta_{nt}})\nonumber
\end{align}
where $\delta_{pt}=\sum\limits_{i\leq h\leq j}v_{ph}l_h'$ for $p\in [j+1,n]$.
Then we have
$$P=Q\cdot (\prod_{q\in[2,i-1]\cup[i+1,j-1]}x_q^{\gamma_{qt}})\cdot x_i^{\gamma_{it}-1}\cdot (\prod_{p\in [j+1,n]}x_p^{\delta_{pt}+\gamma_{pt}-v_{pj}}).$$

For $i$ is the fixed index such that $\gamma_{it}\in \mathbb{Z}_{>0}$, $\gamma_{it}-1>0$, then the power of $x_i$ is nonnegative.

For $p\in[j+1,n]$, we have $$\delta_{pt}+\gamma_{pt}-v_{pj}=v_{pi}l_i'+\cdots+v_{pj}l_j'+\gamma_{pt}-v_{pj}\geq v_{pj}(l_j'-1).$$
From the definition of $W'$, we obtain $l_j'=min\{l_j,\sum\limits_{i\leq q<j}v_{jq}l_q'\}$, and it is easy to see that  $l_j\geq1$ and $\sum\limits_{i\leq q<j}v_{jq}l_q'=v_{ji}+\sum\limits_{i<q<j}v_{jq}l_q'\geq v_{ji}\geq 1$ by the choice of $i$ and $j$.
Then $l_j'\geq1$. Thus the power of $x_p$ is nonnegative. Hence the power of any cluster variable is nonnegative. It follows that $P\in R[x_2,\cdots,x_n]$.
\end{proof}

By the same technique as in \cite{CA3}, we give the following theorem.
\begin{theorem}\label{Thmc}
If a LP seed $\Sigma=(\mathbf{x,F})$ satisfying Condition \ref{condition}, $\mathcal{L}(\Sigma)=\mathcal{U}(\Sigma)$.
\end{theorem}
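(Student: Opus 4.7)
The plan is to follow the strategy of \cite{CA3}, using the technical apparatus just assembled. The inclusion $\mathcal{L}(\Sigma)\subseteq\mathcal{U}(\Sigma)$ is immediate from the definitions, so all the work is in the reverse inclusion $\mathcal{U}(\Sigma)\subseteq\mathcal{L}(\Sigma)$.

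Fix $y\in\mathcal{U}(\Sigma)$. Under Condition \ref{condition}(i), Proposition \ref{U} applies and gives $y\in R[x_1,x_1',x_2^{\pm 1},\dots,x_n^{\pm 1}]$. By Corollary \ref{4.2}, I can then write $y=\sum_{m}c_m x_1^m$ with $c_m\in R[x_2^{\pm 1},\dots,x_n^{\pm 1}]$, where $c_{-m}$ is divisible by $\hat{F}_1^m=F_1^m$ for $m\geq 1$. By Theorem \ref{wuguan}, the standard monomials in $x_1,x_1',\dots,x_n,x_n'$ form an $R$-basis of $\mathcal{L}(\Sigma)$, so the target reduces to exhibiting such a standard-monomial expansion for $y$.

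I propose to induct on the size of the support of $y$ in powers of $x_1$. At each stage, let $a$ be the smallest index with $c_a\neq 0$; Lemma \ref{LTy} then identifies the leading term $LT(y)=\varphi(c_a)x_1^a$, where $\varphi$ is the homomorphism introduced after Theorem \ref{thma}. The heart of the argument is to lift $\varphi(c_a)$ to a standard monomial $\tilde c_a\in R^{st}[x_2,x_2',\dots,x_n,x_n']$: Lemma \ref{6.5} keeps us inside the correct ring after repeatedly peeling off leading terms, and once $c_a$ is known to lie in $\mathrm{Im}(\varphi)$, Lemma \ref{6.6} realises it explicitly in $R[x_2,x_2^{(-)},\dots,x_n,x_n^{(-)}]$ and hence produces the lift. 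Subtracting the corresponding correction $x_1^a\tilde c_a$ when $a\geq 0$, or its $x_1'$-analogue obtained by converting $x_1^{-m}$ into $(x_1')^m$ using the divisibility of $c_{-m}$ by $F_1^m$ when $a<0$, yields $y'\in\mathcal{U}(\Sigma)$ with strictly smaller $x_1$-support, so the induction closes.

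The main obstacle is verifying, at every inductive step, that the coefficient $\varphi(c_a)$ actually lies in $\mathrm{Im}(\varphi)$ so that it can be pulled back to a standard monomial in the remaining variables. This is precisely where the full strength of Condition \ref{condition} is needed: part (ii) fixes the shape of the lexicographically first monomial $M_j=\mathbf{x^{v_j}}$, while parts (iii) and (iv) restrict how $x_1$ and the other cluster variables can occur in each $F_k$. These constraints are exactly what make the combinatorial bookkeeping inside Lemma \ref{6.6} go through — writing each target Laurent monomial in $R[x_2^{\pm 1},\dots,x_n^{\pm 1}]$ as a product $\mathbf{W^{l}}$ of the generators $W_j=x_j^{-1}M_j$ subject to the inequalities (\ref{W-equ}), and then lifting along $\varphi$. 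Once this lifting is available at every step, the induction on the $x_1$-support terminates, and Theorem \ref{wuguan} packages the resulting expansion into membership in $\mathcal{L}(\Sigma)$.
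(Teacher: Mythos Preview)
Your outline misses the structural backbone of the argument: the induction on the rank $n$. The paper's proof uses this rank induction in two essential places, neither of which your scheme supplies.

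First, to even write $y=\sum_m c_m x_1^m$ with $c_m\in R^{st}[x_2,x_2',\dots,x_n,x_n']$ (as Lemma \ref{LTy} requires), you need $y\in R[x_1^{\pm 1},x_2,x_2',\dots,x_n,x_n']$. Proposition \ref{U} only gives $y\in R[x_1,x_1',x_2^{\pm 1},\dots,x_n^{\pm 1}]$; the other containment comes from applying the rank induction hypothesis to the seed $\Sigma'$ obtained by freezing $x_1$, which identifies $\bigcap_{j\geq 2}R[x_1^{\pm 1},\dots,x_j,x_j',\dots,x_n^{\pm 1}]$ with $R[x_1^{\pm 1},x_2,x_2',\dots,x_n,x_n']$. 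Your Corollary \ref{4.2} only yields $c_m\in R[x_2^{\pm 1},\dots,x_n^{\pm 1}]$, and Lemma \ref{LTy} is then inapplicable.

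Second, and more seriously, for $a<0$ the issue is not whether ``$\varphi(c_a)\in\mathrm{Im}(\varphi)$'' (which is tautological) but whether the quotient $z_a=\varphi(c_a)/F_1^{|a|}$ lies in $\mathrm{Im}(\varphi)$. Lemma \ref{6.6} describes $\mathrm{Im}(\varphi)$ but gives no mechanism for showing $z_a$ lands there. The paper handles this by freezing at $\{x_j:j\in[2,n]\setminus J\}$, removing $x_1$, and invoking the rank induction hypothesis a second time to identify $\mathrm{Im}(\varphi)=\mathcal{L}(\Sigma^\ast)=\mathcal{U}(\Sigma^\ast)$; then coprimeness of $F_1$ with each $F_j$ (Proposition \ref{lem0}) transfers the divisibility conditions of Corollary \ref{4.2} from $F_1^{|a|}z_a$ to $z_a$ itself. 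Your induction on the $x_1$-support is only the inner loop of this argument and cannot replace the outer rank induction.
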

\begin{proof}
We apply the induction on $n$, that is, the rank of the LP seed.
When $n=1$, by Lemma \ref{4.1}, we have $\mathcal{L}(\Sigma)=R[x_1,x_1']=\mathcal{U}(\Sigma)$.
Assume that $n\geq2$ and the statement holds for all algebras of rank 2 to $n-1$.
Then we consider about rank $n$.

By Lemma \ref{4.1}, we have $$\mathcal{U}(\Sigma)=\bigcap\limits_{j=2}^{n}
R[x_1^{\pm1},\dots,x_{j-1}^{\pm1},x_j,x_j',x_{j+1}^{\pm1},\dots,x_n^{\pm1}]\bigcap R[x_1,x_1',x_{2}^{\pm1},\dots,x_n^{\pm1}].$$
For the seed $\Sigma'$ obtained from $\Sigma$  by freezing at $x_1$, by the induction assumption, we have $\mathcal{L}(\Sigma')=\mathcal{U}(\Sigma')$, that is, $\bigcap\limits_{j=2}^{n}R[x_1^{\pm1},\dots,x_{j-1}^{\pm1},x_j,x_j',x_{j+1}^{\pm1},\dots,x_n^{\pm1}]=R[x_1^{\pm1},x_2,x_2',\dots,x_n,x_n']$.
Then it is enough to show that
\begin{equation}\label{thmc}
R[x_1,x_1',x_{2}^{\pm1},\dots,x_n^{\pm1}]\cap R[x_1^{\pm1},x_2,x_2',\dots,x_n,x_n']=R[x_1,x_1',\dots,x_n,x_n'].
\end{equation}
The inclusion $\supseteq$ is clear, we only need to prove the converse inclusion.

For $\forall y\in$ LHS of (\ref{thmc}), let $a$ be the smallest power of $x_1$ in $y|_{x_ix_i'\leftarrow F_i}$.
Then $y$ can be written as $\sum\limits_{m=a}^{b}c_mx_1^m$ where $c_m\in R^{st}[x_2,x_2',\dots,x_n,x_n']$.
By Lemma \ref{LTy}, we have $$LT(y)=\varphi(c_a)x_1^a\in R[x_1^{\pm1},x_{2}^{\pm1},\dots,x_n^{\pm1}].$$

If $a\geq0$, by Lemma \ref{6.5}, we have $y\in R[x_1,\dots,x_n,x_n'] \subseteq$ the RHS of (\ref{thmc}).

Otherwise, we apply the induction on $|a|$.
Since $y\in R[x_1,x_1',x_{2}^{\pm1},\dots,x_n^{\pm1}]$, by Lemma \ref{4.2} we have $\varphi(c_a)$ is divisible by $F_1^{|a|}$, that is $\varphi(c_a)=F_1^{|a|}z_a$ for certain $z_a\in R[x_2^{\pm1},\dots,x_n^{\pm1}]$.

When $J=\varnothing$, we have Im$(\varphi)=R[x_2^{\pm1},\dots,x_n^{\pm1}]$ according to Lemma \ref{6.6}. Then $z_a\in$ Im$(\varphi)$.

When $J\neq\varnothing$, we consider the LP seed $\Sigma^{\ast}$ obtained from $\Sigma$  by freezing at $\{x_j|j\in[2,n]-J\}$ and removing $x_1$.
In view of Lemma \ref{6.6}, we have $\mathcal{L}(\Sigma^{\ast})=$Im$(\varphi)$.
Besides, by the induction assumption, we have $\mathcal{L}(\Sigma^{\ast})=\mathcal{U}(\Sigma^{\ast})$.
Using Lemma \ref{4.1}, we obtain Im$(\varphi)=\bigcap\limits_{j\in J}R[x_2^{\pm1},\dots,x_j,x_j',\dots,x_n^{\pm1}]$.

For certain $j\in J$, $z_a$ can be written as $\sum\limits_{s\in \mathbb{Z}}c_sx_j^s$, where $c_s\in R[x_2^{\pm1},\dots,\hat{x_j},\dots,x_n^{\pm1}]$.
Since $F_1^{|a|}z_a=\sum\limits_{s\in \mathbb{Z}}(c_sF_1^{|a|})x_j^s\in$ Im$(\varphi)\subseteq R[x_2^{\pm1},\dots,x_j,x_j',\dots,x_n^{\pm1}]$, by Corollary \ref{4.2}  $c_sF_1^{|a|}$ is divisible by $F_j^{|s|}$ for $s<0$.
By Proposition \ref{lem0}, $F_1$ and $F_j$ are coprime, implying that $c_s$ is divisible by $F_j^{|s|}$.
Using Corollary \ref{4.2} again, we have $z_a\in R[x_2^{\pm1},\dots,x_j,x_j',\dots,x_n^{\pm1}]$.

By the arbitrariness of $j\in J$, we obtain $z_a\in$ Im$(\varphi)$.

Then there exist $c_a'\in R[x_2^{\pm1},\dots,x_n^{\pm1}]$ such that $z_a=\varphi(c_a')$.
It implies that $$LT(y)=\varphi(c_a)x_1^a=F_1^{|a|}z_ax_1^a=\varphi(c_a')F_1^{|a|}x_1^a=\varphi(c_a')x_1'^{|a|}.$$
Then we have $y=\sum\limits_{m=a}^{b}c_mx_1^m=\sum\limits_{m=a}^{-1}c_m'x_1'^{|m|}+\sum\limits_{m=0}^{b}c_mx_1^m\in R[x_1,x_1',\dots,x_n,x_n']$.
\end{proof}

\begin{corollary}
If a LP seed $\Sigma=(\mathbf{x,F})$ satisfied Condition \ref{condition}, then the standard monomials in $x_1,x_1',\dots,x_n,x_n'$ form an $R$-basis of the LP algebra $\mathcal{A}(\Sigma)$.
\end{corollary}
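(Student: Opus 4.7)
The plan is to observe that this corollary is an almost immediate consequence of the two main theorems of the section, combined with the universally valid chain of inclusions $\mathcal{L}(\Sigma)\subseteq\mathcal{A}(\Sigma)\subseteq\mathcal{U}(\Sigma)$ noted just after the definition of the bounds. So I would not rebuild any machinery; instead I would chain results together.

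First, I would invoke Theorem \ref{Thmc}: under the full Condition \ref{condition}, we have $\mathcal{L}(\Sigma)=\mathcal{U}(\Sigma)$. Sandwiching $\mathcal{A}(\Sigma)$ between these two equal algebras forces $\mathcal{L}(\Sigma)=\mathcal{A}(\Sigma)=\mathcal{U}(\Sigma)$. Thus, on the level of sets, a basis of $\mathcal{L}(\Sigma)$ is automatically a basis of $\mathcal{A}(\Sigma)$.

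Next, I would apply Theorem \ref{wuguan}: since Condition \ref{condition} includes items (i) and (ii), which are exactly the hypotheses of Theorem \ref{wuguan}, the standard monomials in $x_1,x_1',\dots,x_n,x_n'$ form an $R$-basis for $\mathcal{L}(\Sigma)$. Transporting this basis across the equality $\mathcal{L}(\Sigma)=\mathcal{A}(\Sigma)$ established in the previous step yields the statement.

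There is essentially no obstacle beyond bookkeeping; the only thing to double-check is that each ingredient is genuinely available under the hypothesis, namely that the full Condition \ref{condition} implies both the (i)+(ii) hypothesis of Theorem \ref{wuguan} and the hypothesis of Theorem \ref{Thmc}. Both are visible by inspection, so the proof reduces to a one-line citation of the two theorems together with the containment $\mathcal{L}(\Sigma)\subseteq\mathcal{A}(\Sigma)\subseteq\mathcal{U}(\Sigma)$.
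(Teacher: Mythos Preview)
Your proposal is correct and matches the paper's own proof essentially verbatim: the paper simply writes ``It is immediately from Theorem \ref{wuguan} and Theorem \ref{Thmc},'' relying implicitly on the same sandwich $\mathcal{L}(\Sigma)\subseteq\mathcal{A}(\Sigma)\subseteq\mathcal{U}(\Sigma)$ that you spell out.
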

\begin{proof}
It is immediately from Theorem \ref{wuguan} and Theorem \ref{Thmc}.
\end{proof}

\begin{example}
Consider the LP seed $(\mathbf{x,F})=\{(a,bcd+1),(b,a+cd),(c,bd+1),(d,1+abc)\}$, it is easy to see that Condition \ref{condition} $(i)$ $(ii)$ $(iii)$ hold.
Since $M_c=1$ and $b|(F_c-M_c)$, $(iv)$ of Condition \ref{condition} holds.
Besides, $\varphi:\ b'\mapsto \frac{cd}{b},\ c'\mapsto \frac{bd+1}{c}=c',\ d'\mapsto \frac{1}{d}$,
Then it is clear that $d^{-1}\in$ Im$(\varphi)$ and $b^{-1}=\varphi(b'c'd'-d)\in$ Im$(\varphi)$.
Thus by Theorem \ref{Thmc}, we have $\mathcal{L}(\Sigma)=\mathcal{U}(\Sigma)$.

Note that this LP seed is not a cluster seed or a generalized cluster seed for $c\in F_a$ since $a\notin F_c$.
\end{example}

The cluster seed is acyclic if and only if there exist a permutation $\sigma$ such that for $i>j$, $b_{\sigma(i),\sigma(j)}\geq0$. Renumbering if necessary the indexes of the initial acyclic cluster, we assume that  for $i>j$, $b_{ij}\geq0$. Then by the exchange polynomials for cluster algebras, we conclude that  the cluster seed is acyclic if and only if for any $j$, $F_j=\frac{y_j}{1\oplus y_j}\prod\limits_{i>j}x_i^{b_{ij}}+\frac{y_j}{1\oplus y_j}\prod\limits_{i<j}x_i^{-b_{ij}}$.
\begin{proposition}
  Condition \ref{condition} is equivalent to acyclicity and coprimeness of exchange polynomials for a cluster seed which is a also LP seed.
\end{proposition}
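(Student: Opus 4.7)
The plan is to exploit the fact that for a cluster seed that is also an LP seed, each exchange polynomial is a binomial
\[ F_k \;=\; c_k^{+}\prod_{b_{ik}>0}x_i^{b_{ik}} \;+\; c_k^{-}\prod_{b_{ik}<0}x_i^{-b_{ik}} \]
whose two monomials have disjoint variable support, because otherwise $x_j\mid F_k$ for some $j$, contradicting the LP requirement. A useful consequence is that clauses (iii) and (iv) of Condition \ref{condition} become automatic once (ii) is known: since $F_k-M_k$ is a single monomial, any variable $x_\ell$ appearing there satisfies $f_k(x_\ell)=F_k-M_k$, and hence $F_k=M_k+f_k(x_\ell)$ trivially. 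Thus for a cluster seed the real content of Condition \ref{condition} is (i) and (ii), and Remark \ref{rmk-cor} already identifies (i) with coprimeness of the exchange polynomials.

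For the implication ``acyclicity and coprimeness $\Rightarrow$ Condition \ref{condition}'', I would renumber the indices so that $b_{ij}\geq 0$ whenever $i>j$. Then the top monomial of $F_k$ uses only $x_i$ with $i>k$ and the bottom monomial uses only $x_i$ with $i<k$. Under the order (\ref{lexicographically}), the monomial with the larger minimum-support-index is lex-first, so $M_k$ is the top monomial, verifying (ii). Clause (i) follows from Remark \ref{rmk-cor}, and clauses (iii) and (iv) follow from the binomial observation of the previous paragraph.

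For the converse, clause (i) again gives coprimeness via Remark \ref{rmk-cor}, and the real work is extracting acyclicity of $\Gamma(B)$ from (ii) alone. Clause (ii) translates, vertex by vertex, into the dichotomy: at every vertex $k$, either all arrows entering $k$ originate from vertices of index $>k$, or all arrows leaving $k$ land on vertices of index $>k$. Assume for contradiction that $\Gamma(B)$ contains a directed cycle $k_1\to k_2\to\cdots\to k_r\to k_1$, and let $v$ be the vertex in this cycle of maximal index; then both the cycle-predecessor and the cycle-successor of $v$ have strictly smaller index, simultaneously refuting both alternatives of the dichotomy at $v$. Hence $\Gamma(B)$ has no directed cycle and the cluster seed is acyclic.

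This last combinatorial step is the main (in fact the only) substantive point. Every other step reduces to a routine bookkeeping check between the binomial form of $F_k$ and the literal statements of clauses (i), (iii) and (iv).
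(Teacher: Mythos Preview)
Your overall strategy matches the paper's: both reduce the proposition to (i)$\Leftrightarrow$coprimeness via Remark~\ref{rmk-cor} and (ii)$\Leftrightarrow$acyclicity, with (iii) and (iv) automatic for binomial exchange polynomials. The one genuinely different step is your argument for (ii)$\Rightarrow$acyclic. The paper shows directly, using sign-skew-symmetry, that the non-leading monomial $F_j-M_j$ can only involve variables $x_i$ with $i<j$, so the given labelling already witnesses acyclicity; your maximal-index-vertex-in-a-cycle contradiction sidesteps that index bookkeeping and is arguably cleaner.

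There is, however, a small gap in your treatment of clause (iv). You correctly note that for any $x_\ell$ appearing in the single monomial $F_k-M_k$ one has $F_k=M_k+f_k(x_\ell)$, but (iv) asks this for the \emph{specific} index $i$ determined by the hypothesis $x_k\in M_i$, and you never verify that this particular $x_i$ appears in $F_k-M_k$. The missing step is one line of sign-skew-symmetry: $x_k\in M_i\subseteq F_i$ forces $b_{ki}\neq 0$, hence $b_{ik}\neq 0$, so $x_i\in F_k$; since $i<k$ and (ii) places $M_k$ among $x_{k+1},\dots,x_n$, necessarily $x_i\in F_k-M_k$. (The analogous check for (iii) is immediate from the hypothesis $x_1\in F_k$ together with (ii).) With this sentence added, your proof is complete.
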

\begin{proof}
  When a cluster seed is a LP seed, recall that (i) in Condition \ref{condition} is equivalent to coprimeness of exchange polynomials by Remark \ref{rmk-cor}.

  When a cluster seed satisfies the conditions (i) and (ii), for any $j\in [2,n-1]$, since $F_j$ is a binomial, we have $F_j=\mathbf{x^{v_j}}+\mathbf{x^{b_j}}$.
  If $x_i\in \mathbf{x^{b_j}}$ for some $i>j$, then $x_j\in \mathbf{x^{v_i}}$ for $b_{ji}b_{ij}<0$, which contradicts to the condition (ii).
  For $j=n$, $F_n=1+\mathbf{x^{b_n}}$.
  From the definition of exchange polynomials for cluster algebras, we have $\mathbf{x^{b_n}}$ is of the form $x_1^{|b_{1n}|}\cdots x_{n-1}^{|b_{n-1,n}|}$.
  For $j=1$, since for any $j>1$, we have $b_{j1}>0$ by the above discussion, so that we obtain $F_1=1+x_2^{|b_{12}|}\cdots x_{n}^{|b_{1n}|}$.
  Then the cluster seed satisfied the conditions (i) and (ii) is acyclic.
  Besides, it is easy to see that when a cluster seed is acyclic, it satisfies the conditions (i) and (ii).Thus the conditions (i) and (ii) are equivalent to acyclicity of exchange polynomials.

  Under the coprimeness and acyclicity, the cluster seed in fact satisfies (iii) and (iv) in Condition \ref{condition}.
\end{proof}
%\begin{example}
%Consider the LP seed $(\mathbf{x,F})=\{(a,bcd+1),(b,a+cd),(c,d+1),(d,1+abc)\}$, it is easy to see that $(i)$ and $(ii)$ of Condition \ref{condition} hold.
%Since $M_c=1$ and $b\nmid (F_c-M_c)$, $(iii)$ of Condition \ref{condition} does not hold.
%Besides, $\varphi:\ b'\mapsto \frac{cd}{b},\ c'\mapsto \frac{d+1}{c}=c',\ d'\mapsto \frac{1}{d}$,
%Then we have $d^{-1}\in$ Im$(\varphi)$ while $b^{-1}\notin$ Im$(\varphi)$.
%Thus we can not the
%不能用CA3的方法，但是否定理不成立呢?
%\end{example}

{\bf Acknowledgements:}\; {\em This project is supported by the National Natural Science Foundation of China  (No.12071422 and No.12131015) and the Zhejiang Provincial Natural Science Foundation of China (No.LY19A010023).}

\end{document}